\numberwithin{equation}{section}
\numberwithin{equation}{subsection}
\theoremstyle{plain}
	\newtheorem{thm}[equation]{Theorem}
	\newtheorem{lemma}[equation]{Lemma}
	\newtheorem{prop}[equation]{Proposition}
	\newtheorem{cor}[equation]{Corollary}
\theoremstyle{remark}
	\newtheorem{remark}[equation]{Remark}
	\newtheorem*{remark*}{Remark}
\theoremstyle{definition}
\newcommand{\labelpar}{\label}
\title{
Non-normal affine monoids, modules and Poincar\'e series of plumbed 3-manifolds}
\author{Tam\'as L\'aszl\'o$^{1,2}$}
\author{Zsolt Szil\'agyi$^{2}$}
\address{$^1$BCAM - Basque Center for Applied Mathematics\\
Mazarredo, 14 E48009 Bilbao, Basque Country – Spain \\}
\email{tlaszlo@bcamath.org}
\address{$^2$Alfr\'ed R\'enyi Institute of Mathematics, Hungarian Academy of Sciences,  
1053 Budapest, Re\'altanoda u. 13-15,  Hungary.}
\email{szilagyi.zsolt@renyi.mta.hu}
\keywords{normal surface singularities, links of singularities, non-normal affine monoids, 
plumbing graphs, rational homology spheres, Poincar\'e series, Seiberg--Witten invariant, polynomial part}
\subjclass[2010]{Primary. 32S05, 32S25, 32S50, 20Mxx, 57M27
Secondary. 14Bxx,  32Sxx, 14J80, 57R57}
\begin{document}
\maketitle

\pagestyle{myheadings} \markboth{{\normalsize
T. L\'aszl\'o and Zs. Szil\'agyi}}{{\normalsize  Non-normal affine monoids, modules and Poincar\'e series}}

\begin{abstract}
We construct a non-normal affine monoid together with its modules associated with a negative definite plumbed $3$-manifold $M$. In terms of their structure, we describe the $H_1(M,\mathbb{Z})$-equivariant parts of the topological Poincar\'e series. In particular, we give combinatorial formulas for the Seiberg--Witten invariants of $M$ and for polynomial generalizations defined in \cite{LSz}.    
\end{abstract}

\section{Introduction}

\subsection{} Let $M$ be a closed oriented plumbed 3-manifold associated with a connected negative definite plumbing graph $\Gamma$. Assume that $M$ is a rational homology sphere, ie. $\Gamma$ is a tree and all the plumbed surfaces have genus zero. Let $\mathcal{V}$ be the set of vertices of $\Gamma$, $\delta_v$ is the valency of the vertex $v\in\mathcal{V}$, and $\mathcal{N}$ is the set of nodes consisting of vertices with $\delta_v\geq 3$. We consider the plumbed 4-manifold $\widetilde{X}$ associated with $\Gamma$. Its second homology $L$ is freely generated by the classes of 2-spheres $\{E_v\}_{v\in\mathcal{V}}$, its second cohomology $L'$ by the (anti)dual classes $\{E_v^*\}_{v\in\mathcal{V}}$. The intersection form embeds $L$ into $L'$ and $H:=L'/L\simeq H_1(M,\mathbb{Z})$. Denote the class of $l'\in L'$ in $H$ by $[l']$. 

We consider the (equivariant) topological Poincar\'e series $Z_{H}(\mathbf{t})=\sum_{l'\in L'}\mathbf{p}_{l'}\mathbf{t}^{l'}$ as the multivariable Taylor expansion at the origin of the equivariant zeta function 
$$f_H(\mathbf{t})=\prod_{v\in\mathcal{V}}(1-[E_v^*]\mathbf{t}^{E_v^*})^{\delta_v-2}$$
where $\mathbf{t}^{l'}:=\prod_{v\in\mathcal{V}}t_v^{l_v}$ for any $l'=\sum_{v\in\mathcal{V}}l_vE_v\in L'$. It has a natural decomposition $Z_H(\mathbf{t})=\sum_{h\in H}Z_h(\mathbf{t})\cdot h$, where the $h$-equivariant parts are given by $Z_h(\mathbf{t})=\sum_{[l']=h}p_{l'}\mathbf{t}^{l'}$ (cf. Section \ref{ss:TPs}).

\subsection{} The topological Poincar\'e series appeared in several articles studying invariants of normal surface singularities, cf. \cite{CDGEq,CDGPs,NPS,NCL,Baldur}. It is a topological tool giving `some' information on analytic invariants associated with the singularity. For special classes of singularities (see \cite{CDG99,NPS,NCL}), this series coincides with its `analytic counterpart' (cf. Section \ref{ss:mot}). This strong parallelism makes interesting connections between the geometry and topology of singularities. Moreover, motivates the following completely topological question as well: how one can recover invariants of $M$ from the associated topological Poincar\'e series?

In the spirit of the aforementioned parallelism, \cite{NJEMS} proved that the Seiberg--Witten invariants of $M$ can be given as multivariable periodic constants of the series (see Section \ref{ss:sw} for details). This result was reinterpreted in \cite{LN} by showing that the Seiberg--Witten invariants appear as coefficients of a multivariable Ehrhart polynomial of certain polytope associated with the manifold $M$. One of the  outcomes of this interpretation is a reduction for the number of variables of the series and defines reduced series $Z_h(\mathbf{t}_{\mathcal{N}})$ for any $h\in H$, reflecting on the complexity of the manifold $M$ (see Sections \ref{ss:redtps} and \ref{ss:reductionb}). Very recently, a decomposition of the topological Poincaré series into polynomial and `negative degree' parts was given in \cite{LSz}, providing an effective calculation of the Seiberg--Witten invariants from the reduced series.

\subsection{} The main motivation of our work can be explained by the following comparison.  
In \cite{LSz}, the polynomial part of $Z_h(\mathbf{t}_{\mathcal{N}})$ for a fixed $h\in H$, in particular the corresponding Seiberg--Witten invariant of $M$, is computed from the equivariant zeta function $f_H(\mathbf{t})$ by summing up equivariant polynomials associated with vertices and edges of the graph $\Gamma$ and then considering the equivariant parts of the result. In fact, the reduction helps in minimizing the number of polynomials which have to be summed up.

In this article we use another approach, extending the idea of \cite{LN}. First, we decompose the equivariant Poincar\'es series into its equivariant parts and reduce the number of variables to get $Z_{h}(\mathbf{t}_{\mathcal{N}})$. Then the reduced series can be presented as a (non-equivariant) rational fraction. Finally, we compute the desired polynomial part by summing up certain polynomial datas associated with the rational fraction and the graph $\Gamma$. The fundamental difference between the two approaches is when the equivariant decomposition happens.

This approach leads to the construction of the sets $\mathcal{M}_{a}$ indexed by some `lifts' $a$ associated with $h\in H$. There is a distinguished set $\mathcal{M}_{0}$ corresponding to the class $0\in H$ which has the structure of a non-normal affine monoid. In general, $\mathcal{M}_{a}$ is an $\mathcal{M}_{0}$-module of rank equals with the number of nodes. 
We study their structure by describing the set of holes $\overline{\mathcal{M}}_{a}\setminus \mathcal{M}_{a}$, where $\overline{\mathcal{M}}_{a}$ is the normalization of $\mathcal{M}_{a}$, see Section \ref{s:monoid}.

First of all, we describe the equivariant decomposition of the topological Poincar\'e series and express the $h$-equivariant parts $Z_h(\mathbf{t}_{\mathcal{N}})$ as binomial sums of (fine) Hilbert series associated with certain filtered pieces of the modules $\mathcal{M}_{a}$, which in fact motivates their construction. Then we give a rational representation of $Z_h(\mathbf{t}_{\mathcal{N}})$ using  generating functions associated with the holes of $\mathcal{M}_a$. 

We derive from this representation a combinatorial formula for the polynomial part of $Z_h(\mathbf{t}_{\mathcal{N}})$, in particular for the Seiberg--Witten invariant, in terms of the polynomial data given by the holes of $\mathcal{M}_{a}$. This is a generalization of the formula found in \cite[Section 7.4]{LN} for special cases.

On the other hand, we would like to emphasize that presumably the structure of the non-normal affine monoid and its modules will be an important tool studying the connections between the topology and geometry of normal surface singularities. In order to support this, we show that our construction  applied to the minimal embedded resolution graph of an irreducible plane curve singularity gives back the (analytic) semigroup associated with the singularity. This makes possible to think about a parallelism between modules with Seiberg--Witten invariants associated with links of normal surface singularities and semigroups with delta invariants associated with plane curve singularities. 
Moreover, we also discuss the role of the numerical semigroup of Seifert homology spheres given by our construction, mentioning results from \cite{CK} and \cite{Semigp}.

\subsection{}The structure of the article is the following: in Section \ref{s:lgi} we explore the necessary technical ingredients related to the combinatorics of the graph $\Gamma$, such as generalized Seifert invariants and explicit description of the generators, relations and lifts of the group $H$. Section \ref{s:tps} contains some details and preliminary results about topological Poincar\'e series, and we discuss the equivariant decomposition with explicit calculation of the reduced series. Section \ref{s:monoid} gives the construction and study the structure of the non-normal affine monoid and its modules associated with $M$. Section \ref{ss:Zrational} deduces the rational representation of the series which induces the formulas for polynomial parts and Seiberg--Witten invariants presented in Section \ref{s:SW}. Notice that we illustrate the theory of the article on examples presented in Sections \ref{example},  \ref{ex:p2}, \ref{ex:p3} and \ref{ex:nontriv}. Finally, Section \ref{s:exanal} illustrates the role of our construction by discussing two examples: semigroups of plane curve singularities and semigroups of Seifert homology spheres.

\subsection*{Acknowledgements}
TL is supported by ERCEA Consolidator Grant 615655 – NMST and also
by the Basque Government through the BERC 2014-2017 program and by Spanish
Ministry of Economy and Competitiveness MINECO: BCAM Severo Ochoa
excellence accreditation SEV-2013-0323. Partial financial support to TL was provided by the National Research, Development and Innovation Office – NKFIH under grant number  112735. Financial support to ZsSz was provided by the `Lendület' program.

\section{ Links and graphs of normal surface singularities } \labelpar{s:lgi}

Let  $(X,o)$ be a complex normal surface singularity and we consider the minimal good resolution $\pi:\widetilde{X}\to X$ with dual graph
$\Gamma$. That is, in the exceptional divisor every $(-1)$-curve must intersect at least three other irreducible components. 
We assume that the link $M$ is a rational homology sphere. Hence, $\Gamma$ is a tree and all the irreducible exceptional divisors have genus $0$.

\subsection{Arithmetics of the graph $\Gamma$}\label{sss:notgraph} 
Let $\mathcal{V}$ be the set of vertices of $\Gamma$  and denote $\delta_v$ the valency of the vertex $v$, ie. the number of edges adjacent to it.
We consider two subsets of $\mathcal{V}$: the set of \emph{nodes} $\mathcal{N}:=\{n\in \mathcal{V} \,|\, \delta_{n} \geq 3\}$ and the set of \emph{ends} $\mathcal{E} :=\{ u \in \mathcal{V} \,|\, \delta_{u} = 1\}$. The {\it chains} are connected components of the graph obtained by deleting the nodes and their adjacent edges. {\it Legs} are chains containing an end-vertex. 
For any $n\in \mathcal{N}$ we denote by $\mathcal{N}_{n}$ the set of nodes which are connected to $n$ by a chain. 
Similarly, let $\mathcal{E}_{n}$ be the set of ends connected to $n$ by their legs. Their cardinalities will be $\delta_{n,\mathcal{N}} =  |\mathcal{N}_{n}|$ and 
$\delta_{n,\mathcal{E}}= |\mathcal{E}_{n}|$. Hence, 
$
\delta_{n} = \delta_{n,\mathcal{N}} + \delta_{n,\mathcal{E}}.
$
We also distinguish the subset of (higher complexity) nodes $\widehat{\mathcal{N}} := \{n\in \mathcal{N}  \,|\, \delta_{n,\mathcal{N}} \geq 2\}$.

Since $\Gamma$ is a tree, for any two vertices $v,w\in \mathcal{V}$ there is a unique minimal connected subgraph 
$[v,w]$ with vertices $\{v_{i}\}_{i=0}^{k}$ such that $v=v_{0}$ and $w=v_{k}$. Similarly, we also introduce notations 
$[v,w)$, $(v,w]$ and $(v,w)$ for the complete subgraphs with vertices $\{v_{i}\}_{i=0}^{k-1}$, $\{v_{i}\}_{i=1}^{k}$ and $\{v_{i}\}_{i=1}^{k-1}$ respectively.

\subsubsection{}\labelpar{sss:lattices}
We consider the lattice $L := H_2 ( \widetilde{X},\mathbb{Z} )$, which is freely generated by the classes of the irreducible exceptional divisors $\{E_v\}_{v\in\mathcal{V}}$. This lattice comes with a nondegenerate negative definite intersection form $I:=[(E_v,E_w)]_{v,w}$. 
 The vertex $v$ of $\Gamma$ is decorated with $b_v:=I_{vv} \in \mathbb{Z}_{<0}$. Moreover, in the case $\delta_v\leq 2$ we have $b_v\leq -2$, since $\Gamma$ is the minimal good resolution graph. 
Set $L':=H^2( \widetilde{X}, \mathbb{Z} )$. Then the intersection form provides an embedding $L \hookrightarrow L'$ with finite factor $H:=L'/L\simeq H^2(\partial \widetilde{X},\mathbb{Z})\simeq H_1(M,\mathbb{Z})$, and it extends to $L'$ (since $L'\subset L\otimes \mathbb{Q}$).
Hence, $L'$ is the dual lattice, freely generated by the (anti-)duals $\{E_v^*\}_{v\in \mathcal{V}}$, where we
prefer the convention $( E_v^*, E_w) =  -1$ for $v = w$, and
$0$ otherwise. The class of an element $l'\in L'$ will be denoted by $[l']\in H$.

\subsubsection{}\labelpar{ss:determinants}
The {\it determinant} of a subgraph $\Gamma'\subseteq \Gamma$ is defined as the determinant of the negative of the submatrix of $I$ with rows and columns indexed with vertices of $\Gamma'$, and it will be denoted by $\mathrm{det}_{\Gamma'}$. In particular, $\mathrm{det}_\Gamma:=\det(-I)=|H|$. 
The inverse of $I$ has entries
 $(I^{-1})_{vw}=(E_v^*,E^*_w)$, all of them are negative. Moreover,
 they can be computed as  (cf. \cite[page 83 and \S 20]{EN})
 \begin{equation}\label{eq:DETsgr}
- (E_v^*,E^*_w) = \frac{\det_{\Gamma\setminus [v,w]}}{\det_{\Gamma}}.
\end{equation}
We set the following `edge' cases: $\mathrm{det}_{[v,v)}=\det_{(v,v]}=1$, $\mathrm{det}_{(v,v)}=0$ and $\mathrm{det}_{(v,v')}=1$ whenever $v$ and $v'$ are consecutive vertices. 
Then we have the following determinantal relation.
\begin{lemma}\labelpar{lem:chaineq}
Let $v,w,w',v'$ be (not necessarily distinct) vertices on a chain in the given order. Then one has
$
\mathrm{\det}_{[v,w')}\mathrm{\det}_{(w,v']} = \mathrm{\det}_{[v,v']} \mathrm{\det}_{(w,w')} + \mathrm{\det}_{[v,w)} \mathrm{\det}_{(w',v']}.
$
\end{lemma}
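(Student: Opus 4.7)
The plan is to reformulate the identity as one between continuants on a single chain and exploit the linear recurrence they satisfy. Label the vertices of the chain in their given order as $v_0, v_1, \dots, v_N$ and abbreviate $P(i,j) := \det_{[v_i, v_j]}$ for $i \le j$, extended by the paper's edge conventions to $P(i, i-1) = 1$ and $P(i, i-2) = 0$. Expanding the tridiagonal submatrix of $-I$ on the interval $[v_i, v_j]$ along its last row yields the three-term recurrence
$$P(i,j) \;=\; (-b_{v_j})\, P(i, j-1) - P(i, j-2), \qquad j \ge i.$$
Writing $v = v_a$, $w = v_b$, $w' = v_c$, $v' = v_d$ with $a \le b \le c \le d$, the stated equality translates to
$$P(a, c-1)\, P(b+1, d) \;=\; P(a, d)\, P(b+1, c-1) + P(a, b-1)\, P(c+1, d).$$

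For any fixed first argument $i$, the sequence $j \mapsto P(i, j)$ satisfies this second-order recurrence in $j$, so the solution space is two-dimensional. A short direct calculation using the recurrence shows that the Wronskian
$$W(j) := P(a, j-1)\, P(b+1, j) - P(a, j)\, P(b+1, j-1)$$
is independent of $j$: the $(-b_{v_j})$ contributions cancel. Evaluating at $j = b$ via $P(b+1, b) = 1$ and $P(b+1, b-1) = 0$ yields $W(j) \equiv P(a, b-1)$. This Wronskian identity is already the target equality in the case $d = c$, since $P(c+1, c) = 1$ reduces the right-hand side accordingly.

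Assuming $P(a, b-1) \ne 0$, the pair $\{P(a, j), P(b+1, j)\}$ is a basis of the solution space, so the third solution $j \mapsto P(c+1, j)$ expands uniquely in it. Matching its initial values $P(c+1, c-1) = 0$ and $P(c+1, c) = 1$ produces a $2 \times 2$ linear system whose determinant is precisely $W(c) = P(a, b-1)$, hence
$$P(a, b-1)\, P(c+1, j) \;=\; P(a, c-1)\, P(b+1, j) - P(a, j)\, P(b+1, c-1), \qquad j \ge c - 1.$$
Specializing $j = d$ and rearranging gives the required identity. The hypothesis $P(a, b-1) \ne 0$ can then be dropped either because the identity is polynomial in the decorations $b_{v_k}$, or by converting the argument into a short induction on $d$ with the Wronskian identity supplying the base cases $d = c$ and $d = c+1$. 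The only real obstacle is bookkeeping at the endpoints: one must verify that the paper's edge conventions on $\det$ are exactly those making the recurrence and the Wronskian start correctly at each interval boundary, so that the degenerate configurations ($v = w$, $w = w'$, $w' = v'$, or consecutive pairs among them) are absorbed into the general argument without separate treatment.
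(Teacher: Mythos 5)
Your argument is correct. The paper omits the proof of this lemma entirely (it is declared to follow from ``standard calculations'', with special cases referenced to \cite[10.2]{NOSZ}), so there is no in-paper argument to compare against; your continuant reformulation, the three-term recurrence $P(i,j)=(-b_{v_j})P(i,j-1)-P(i,j-2)$, the constancy of the Wronskian $W(j)$ with value $P(a,b-1)$, and the expansion of $P(c+1,\cdot)$ in the basis $\{P(a,\cdot),P(b+1,\cdot)\}$ together give a complete and valid derivation, with the paper's edge conventions matching your initial values $P(i,i-1)=1$, $P(i,i-2)=0$ exactly. One small simplification: the nondegeneracy hypothesis $P(a,b-1)\neq 0$ is automatic in this setting, since $[v_a,v_{b-1}]$ either indexes a principal submatrix of the positive definite matrix $-I$ (so its determinant is strictly positive) or is empty (determinant $1$ by convention), so neither the Zariski-density argument nor the fallback induction on $d$ is actually needed.
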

The proof is based on standard calculations, therefore we omit from here. Special cases are proved in \cite[10.2]{NOSZ}.

\subsubsection{\bf Generalized Seifert invariants}
Some determinants associated with subgraphs will play a special role in the sequel and they are defined as follows.
Recall that for a subgraph
\begin{center}
\begin{picture}(170,35)(80,5)
\put(100,20){\circle*{3}}
\put(130,20){\circle*{3}}
\put(200,20){\circle*{3}}
\put(230,20){\circle*{3}}
\put(100,20){\line(1,0){50}}
\put(230,20){\line(-1,0){50}}
\put(165,20){\makebox(0,0){$\cdots$}}
\put(100,30){\makebox(0,0){$-k_1$}}
\put(130,30){\makebox(0,0){$-k_2$}}
\put(230,30){\makebox(0,0){$-k_s$}}
\put(200,30){\makebox(0,0){$-k_{s-1}$}}
\put(100,10){\makebox(0,0){$v_1$}}
\put(130,10){\makebox(0,0){$v_2$}}
\put(230,10){\makebox(0,0){$v_s$}}
\put(200,10){\makebox(0,0){$v_{s-1}$}}
\end{picture}
\end{center}
with vertices $\{v_i\}_{i=1}^s$ and  $k_i\geq 2$ for all $i$
the arithmetical properties of the graph can be encoded by the normalized Seifert invariant $(\alpha,\omega)$, where $0<\omega<\alpha$ and $\mathrm{gcd}(\alpha,\omega)=1$, using Hirzebruch/negative continued fraction expansion 
$$ \alpha/\omega=[k_{1},\ldots, k_s]=
k_{1}-1/(k_{2}-1/(\cdots -1/k_{s})\cdots ).$$
In particular, the plumbed 3-manifold associated with the above graph itself is the lens space $L(\alpha,\omega)$.
We also consider $\widetilde\omega$ satisfying $\omega\widetilde\omega\equiv 1$ (mod $\alpha$), $0< \widetilde\omega<\alpha$. Clearly, these invariants are graph determinants, namely $\alpha=\mathrm{det}_{[v_1,v_s]}$, $\omega=\mathrm{det}_{[v_2,v_s]}$, $\widetilde\omega=\mathrm{det}_{[v_1,v_{s-1}]}$. Moreover, $\omega\widetilde\omega=\alpha \tau+1$ for $\tau = \mathrm{det}_{[v_2,v_{s-1}]}$ by Lemma  \ref{lem:chaineq}.

Similarly to the case of star-shaped plumbing graphs, ie. $M$ is a Seifert 3-manifold (cf. \cite{JN83,NOSZ}), we encode the information of $\Gamma$ by the normalized Seifert invariants of the chains and legs and the orbifold Euler numbers attached to the nodes $n\in \mathcal{N}$. In fact, for any $n\in \mathcal{N}$ it is convenient to consider the maximal star-shaped subgraphs $\Gamma_n$ of $\Gamma$ 
which contains only one node $n\in \mathcal{N}$: 

\begin{center}
\def\leg rotated by (#1){\draw[rotate around={#1:(0,0)}] (0,0) -- (0.5,0);
\fill[rotate around={#1:(0,0)}] (0.9, 0) circle [radius=0.02];
\fill[rotate around={#1:(0,0)}] (1, 0) circle [radius=0.02];
\fill[rotate around={#1:(0,0)}] (1.1, 0) circle [radius=0.02];\draw[rotate around={#1:(0,0)}] (1.5, 0) -- (2,0);
\fill[rotate around={#1:(0,0)}] (2, 0) circle [radius = 0.06];
} 

\def\endleg rotated by (#1) with label (#2) at (#3){\leg rotated by ({#1});
\draw[ rotate around={{#1}:(0,0)}] (2,0) -- (2.6,0);
\fill[rotate around={{#1}:(0,0)}] (2.6,0) circle [radius = 0.06];
\draw[rotate around={{#1}:(0,0)}] (3,0) node[#3]{#2};
}

\def\nodeleg rotated by (#1) with label (#2) at (#3){\leg rotated by ({#1});
\draw[densely dotted, rotate around={{#1}:(0,0)}] (2,0) -- (2.5,0);
\draw[densely dotted, rotate around={{#1}:(0,0)}] (2.6,0) circle [radius = 0.06];
\draw[rotate around={{#1}:(0,0)}] (3.3,0) node[#3]{#2};
}
\def\vertdots (#1){
\fill (#1, 0.1) circle [radius=0.02];
\fill (#1, 0) circle [radius=0.02];
\fill (#1, -0.1) circle [radius=0.02];}

\begin{tikzpicture}[scale=0.9]
\fill (0,0) circle [radius=0.06];
\draw (0,0.2) node[above]{$n$}; 
\leg rotated by ({30});
\nodeleg rotated by (30) with label () at (below);
\draw (2.8, 1.3) node{$n_{1}$}; 
\nodeleg rotated by (-30) with label () at (above);
\draw (2.8, -1.4) node{$n_{s_{n}}$}; 
\endleg rotated by (150)  with label () at (below);
\draw (-2.8, -1.4) node{$u_{d_{n}}$}; 
\draw (-2.8, 1.4) node{$u_{1}$}; 
\endleg rotated by (210)  with label () at (above);
\vertdots (1.7);
\vertdots (-1.7);
\end{tikzpicture}
\end{center}
where $n_i\in \mathcal{N}_n$ $(1\leq i\leq s_n)$ and $u_j\in\mathcal{E}_n$ $(1\leq j\leq d_n)$. 
We denote the normalized Seifert invariants of the legs $(n,u_j]$ by $(\alpha_{u_j},\omega_{u_j})$. 
Moreover, the chain $(n,n_i)$ connecting the nodes $n$ and $n_i$ considered as leg of $\Gamma_n$ has normalized Seifert invariant $(\alpha_{n,n_i},\omega_{n,n_i})$, while considered as leg of $\Gamma_{n_i}$ has invariant $(\alpha_{n_i,n},\omega_{n_i,n})$ with  relation $\omega_{n,n_i}\omega_{n_i,n} = \alpha_{n, n_{i}} \tau_{n,n_{i}}+1$.
Notice that $\alpha_{n,n_i}=\alpha_{n_i,n}$ and $\tau_{n,n_{i}} = \tau_{n_{i}, n}$.

\subsubsection{\bf Orbifold intersection matrix}
We define the orbifold Euler number of the star-shaped subgraph $\Gamma_n$ by
$$e_n=b_n+\sum_{v\in\mathcal{E}_n}\frac{\omega_v}{\alpha_v}+\sum_{n'\in\mathcal{N}_n}\frac{\omega_{n,n'}}{\alpha_{n,n'}}.$$
Notice that $e_n<0$ for any $n\in \mathcal{N}$, since $\Gamma_n$ itself is negative definite being a subgraph of a negative definite graph $\Gamma$. 
One can collect  these informations by defining the {\em orbifold intersection matrix} $I^{orb}= (I^{orb} _{n,n'})_{n,n'\in\mathcal{N}}$ associated with $\Gamma$ as 
$$
I^{orb}_{n,n'} 
:= 
\begin{cases}
\displaystyle e_n & \textnormal{if }n=n',
\\
\displaystyle\frac{1}{\alpha_{n,n'}} & \textnormal{if }n'\in \mathcal{N}_{n},
\\
0 & \textnormal{otherwise}.
\end{cases}
$$ 
Then $I^{orb}$ is again negative definite and  $\mathrm{det}_{\Gamma} = \mathrm{det}(-I^{orb})\cdot \mathrm{det}_{\Gamma \setminus \mathcal{N}} $ by  \cite[Lemma 4.1.4]{BNnewt}. Furthermore, one has
\begin{equation}\label{eq:IorbInverse}
(E_n^*,E_{n'}^*)=(I^{orb})^{-1}_{n,n'}.
\end{equation}

\subsection{The group $H$: generators and relations}
We introduce a partial order on $\mathcal{N}$ such that for any  two nodes $n,n'\in\mathcal{N}$ connected by a chain we pick $n<n'$ or $n'<n$. Using this partial order for any $n<n'$ we denote by $n_{n'}$ the vertex on the chain $(n,n')$ such that $(E_{n},E_{n_{n'}})=1$. When $(n,n')=\emptyset$ then $n_{n'}=n'$. 
In the sequel, we will need the following identities expressing relations regarding $E^*_v$'s using determinants of respective subgraphs of $\Gamma$. They were considered in \cite[Lemma 7.1.2]{LN} too, hence we omit their proof.

\begin{lemma}\labelpar{lem:localeq}
Assume we have a connected negative definite plumbing tree $\Gamma$ as in \ref{sss:notgraph}.
\begin{enumerate}[(a)]
\item\label{local:a} Let $n\in \mathcal{N}$ and $u\in\mathcal{E}_{n}$. Then for any $v\in [n, u)$ we have 
  $$
  E^*_{v} 
  = 
  \mathrm{det}_{(v,u]} E^*_u - \sum_{w \in (v,u]} \mathrm{det}_{(v,w)} E_{w}. 
$$   
In particular, we have $E^{*}_{n} = \alpha_{u} E^{*}_{u} - \sum_{w \in (n,u]} \mathrm{det}_{(n,w)} E_{w}$ and  $E^{*}_{v} = \omega_{u} E^{*}_{u} - \sum_{w \in (v,u]} \mathrm{det}_{(v,w)} E_{w} $ for  the first vertex $v$ on $(n,u)$.

\item\label{local:b} Consider two nodes $n<n'$ connected by a non-empty chain. Then for all $v\in [n',n_{n'})$ we have  
  $$
  E^{*}_{v} = \mathrm{det}_{(v,n_{n'}]}E^{*}_{n_{n'}} - \mathrm{det}_{(v,n_{n'})}E^{*}_{n} - \sum_{w\in (v,n_{n'}] } \mathrm{det}_{(v,w)}E_{w}.
  $$
In particular, for the vertex $m$  on the chain $(n,n')$ such that $(E_{m},E_{n'})=1$ we get $E^{*}_{m} = \omega_{n',n}E^{*}_{n_{n'}} - \tau_{n',n} E^{*}_{n} - \sum_{w\in (m,n_{n'}] } \mathrm{det}_{(m,w)}E_{w}$.  
\end{enumerate}
\end{lemma}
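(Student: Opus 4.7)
My plan is to prove both identities by the standard duality trick: since the intersection form is non-degenerate on $L'\otimes\mathbb{Q}$ and $\{E_{v'}\}_{v'\in\mathcal{V}}$ is a $\mathbb{Q}$-basis, an element $x\in L'$ is uniquely determined by the collection of values $(x, E_{v'})$ for $v'\in\mathcal{V}$. So I would verify each identity by intersecting both sides with $E_{v'}$ and checking equality, using only the defining property $(E^*_w, E_{v'}) = -\delta_{w,v'}$ and the intersection data of $\Gamma$.

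For part (a), pairing against $E_{v'}$ reduces the claim to
\[
-\delta_{v,v'} \;=\; -\det_{(v,u]}\,\delta_{u,v'} \;-\; \sum_{w\in(v,u]}\det_{(v,w)}\,(E_w, E_{v'}).
\]
Labelling the chain as $v = v_0, v_1, \ldots, v_k = u$, the cases are: (i) $v' = v$ produces $-1 = -\det_{(v,v_1)} = -1$; (ii) $v' = u$ reduces to the tridiagonal expansion $\det_{(v,u]} = -b_u\det_{(v,u)} - \det_{(v,u_{-1})}$; (iii) $v' = v_j$ interior gives the same recursion $\det_{(v,v_{j+1})} = -b_{v_j}\det_{(v,v_j)} - \det_{(v,v_{j-1})}$; (iv) if $v'$ lies off the chain, the only possible nontrivial adjacency is at $v = n$, but then $v'$ is not adjacent to any $w\in(v,u]$, so both sides vanish. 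The recursions used are special cases of Lemma \ref{lem:chaineq}, closed by the conventions $\det_{(v,v)} = 0$ and $\det_{(v,v_1)} = 1$.

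For part (b), the same strategy applies to the chain $[n, n']$: pair with $E_{v'}$ and run through the analogous cases on $[v, n_{n'}]$. Two new subcases appear. When $v' = n$, the extra term $\det_{(v,n_{n'})}E^*_n$ exactly cancels the contribution $\det_{(v,n_{n'})}\cdot(E_{n_{n'}}, E_n) = \det_{(v,n_{n'})}$ coming from the sum (the only $w\in(v,n_{n'}]$ adjacent to $n$ being $n_{n'}$ itself). When $v' = n_{n'}$, the combination $-\det_{(v,n_{n'}]} - b_{n_{n'}}\det_{(v,n_{n'})} - \det_{(v,m)}$ vanishes by the same tridiagonal recursion, where $m$ denotes the predecessor of $n_{n'}$ on the chain; the adjacency $(E_{n_{n'}}, E_n) = 1$ plays no role here because $n \notin (v, n_{n'}]$. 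Neighbors of $n$ lying off the chain $[n,n']$ contribute zero on both sides.

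The main obstacle is purely organizational: the entire verification comes down to a single recursion (Lemma \ref{lem:chaineq}) together with the endpoint conventions on degenerate determinants, but the boundary case $v' = n_{n'}$ in part (b) demands careful bookkeeping of the adjacency between $n_{n'}$ and $n$ to ensure that the two terms $\det_{(v,n_{n'}]}E^*_{n_{n'}}$ and $\det_{(v,n_{n'})}E^*_n$ reconcile correctly. Once the cases are laid out as above, no further input is needed.
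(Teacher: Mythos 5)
Your verification is correct: pairing both sides against each $E_{v'}$, the identities reduce to the endpoint conventions together with the tridiagonal recursion $\det_{[v_1,v_j]}=-b_{v_j}\det_{[v_1,v_{j-1}]}-\det_{[v_1,v_{j-2}]}$ (indeed a special case of Lemma~\ref{lem:chaineq} with $w=v_{j-1}$, $w'=v'=v_j$), and your case analysis — including the cancellation at $v'=n$ and the boundary case $v'=n_{n'}$ in part (b) — is complete. The paper itself omits the proof and defers to \cite[Lemma 7.1.2]{LN}, where essentially this same computation is carried out, so your argument matches the intended one; the only cosmetic point is that you reuse the letter $m$ for the predecessor of $n_{n'}$, whereas the statement reserves $m$ for the chain vertex adjacent to $n'$.
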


\subsubsection{}\labelpar{ss:lift} We introduce short notation $g_{v}:=[E^{*}_{v}]$ for classes of dual basis elements in $H$. With these notation the class of every $l'=\sum_{v\in \mathcal{V}}l'_{v}E^{*}_{v}$ can be written as
$$
[l'] = \sum_{n\in \mathcal{N}} \Big( 
a_{n}g_{n} + \sum_{u\in \mathcal{E}_{n}} a_{u}g_{u} + \sum_{n'>n} a_{n_{n'}}g_{n_{n'}} \Big)
$$
with
$$
a_{n} := l'_{n} - \sum_{\substack{n'>n\\v\in (n',n_{n'})}} l'_{v}\mathrm{det}_{(v,n_{n'})}, 
\quad
a_{u} := l'_{u} + \sum_{v\in (n,u)} l'_{v} \mathrm{det}_{(v,u]},
\quad
a_{n_{n'}} := l'_{n_{n'}} + \sum_{\substack{n'>n\\
v\in (n',n_{n'})}} l'_{v}\mathrm{det}_{(v,n_{n'}]}
$$
by Lemma \ref{lem:localeq}. 
We call $a=\sum_{n\in \mathcal{N}}a_{n}E^{*}_{n}+\sum_{u\in \mathcal{E}} a_{u}E^{*}_{u} + \sum_{n<n'}a_{n_{n'}}E^{*}_{n_{n'}}$ the \emph{reduced transform} of $l'$ 
and a \emph{reduced lift} of $[l']\in H$. 
Thus, using the idea of Neumann  \cite{neumannA} and \cite[7.1]{LN}, the group $H=L'/L$ can be presented  with generators
$$
g_{n} = [E_{n}^{*}],\ n\in \mathcal{N},\qquad g_{u} = [E_{u}^{*}],\ u\in \mathcal{E}, \qquad g_{n_{n'}} = [E^{*}_{n_{n'}}],\ n<n'
$$
and relations
\begin{flalign*}
R_{u}:=g_{n} - \alpha_{u} g_{u} 
&= 0, 
\qquad
(u\in \mathcal{E}_{n})
\\
R_{n_{n'}}:= -\alpha_{n,n'} g_{n_{n'}} + \omega_{n,n'} g_{n} + g_{n'} 
& = 0, 
\qquad
(n<n')
\\
 R_{n}:= -b_{n}g_{n} - \sum_{u\in \mathcal{E}_{n}} \omega_u g_{u} -\sum_{n<n'}g_{n_{n'}} - \sum_{n'<n} \big( \omega_{n,n'}g_{n'_{n}} - \tau_{n,n'} g_{n'} \big) 
& = 0, \qquad
( n\in \mathcal{N} ).  
\end{flalign*}
In particular, $a=\sum_{v}a_{v}E^{*}_{v}$ and $x=\sum_{v}x_{v}E^{*}_{v}$ are reduced lifts of the same group element $h\in H$ if and only if there are $\ell_{n}, \ell_{u}, \ell_{n_{n'}} \in \mathbb{Z}$ such that 
\begin{align*}
x_{u} - a_{u} 
&=
- \ell_{u}\alpha_{u} - \omega_u \ell_{n}, 
&
(u\in \mathcal{E}_{n})  
\tag{$\textnormal{R}'_{u}$} \label{R'u}
\\
x_{n} - a_{n} 
&=
\sum_{u\in \mathcal{E}_{n}} \ell_{u} - b_{n}\ell_{n} + \sum_{n>n'}\ell_{n_{n'}} + \sum_{n<n'} \omega_{n,n'} \ell_{n_{n'}} + \sum_{n<n'} \tau_{n,n'} \ell_{n'},
\tag{$\textnormal{R}'_{n}$} 
& (n\in \mathcal{N}) \label{R'n}
\\
 x_{n_{n'}}- a_{n_{n'}}
&=
-\ell_{n} - \omega_{n',n}\ell_{n'} - \alpha_{n,n'}\ell_{n_{n'}},
&
(n<n').
\tag{$\textnormal{R}'_{n_{n'}}$} \label{R'nn'}
\end{align*}
\begin{remark}
For any class $h\in H$ we will consider the unique representative $r_h\in L'$ characterized by $r_h\in \sum_{v}[0,1)E_v$ and $[r_h]=h$ (cf. \cite[5.4]{NOSZ}). In general, $r_{h}$ is not a reduced lift of $h$.
\end{remark}

\begin{remark}\label{rk:smallergen}
The group $H$ can be generated by elements $g_{u}$, $u\in \mathcal{E}$. Indeed, we choose a special partial order on nodes of the graph $\Gamma$ in the following way. We fix a `root' node and we orient all of its outgoing paths away from the root. Then the partial order is defined such that the tail of an oriented chain is greater then its head. We proceed with induction on the number of the nodes of the graph. In every step we choose a  minimal node $n$. If $\mathcal{E}_{n}\neq \emptyset$ then we express $g_{n}$ from a relation $R_{u}$, $u\in \mathcal{E}_{n}$, otherwise there must be $\widetilde{n}<n$ in the original graph $\Gamma$, hence we express $g_{n}$ from the relation $R_{\widetilde{n}_{n}}$. Moreover, either we have a unique node $n'>n$ and in this case $g_{n_{n'}}$ is expressed from $R_{n_{n'}}$, or $n$ is the root node which finishes the induction. At the end of each step we remove the chain $[n,n')$ together with all legs attached to $n$ in order to get a new graph with one node less.
\end{remark}

\subsubsection{\bf Projections}
In Section \ref{ss:redtps} we will consider the reduction of the Poincar\'e series to only node variables, which involves the use of the projection $\pi_{\mathcal{N}}:\mathbb{R}\langle E_{v} \rangle_{v\in \mathcal{V}} \to \mathbb{R}\langle E_{n} \rangle_{n\in \mathcal{N}}$ along the subspace spanned by $E_{v}$ for $v\notin \mathcal{N}$. Therefore, we introduce the projection $\mathbf{c}_{a}:=\pi_{\mathcal{N}}(a)$ of a reduced lift $a$. One can express $\mathbf{c}_{a}$ with the basis $\{\pi_{\mathcal{N}}(E^{*}_{n})\}_{n\in \mathcal{N}}$ as $
\mathbf{c}_{a} = \sum_{n\in \mathcal{N}}A_{n} \pi_{\mathcal{N}}(E^{*}_{n}) 
$ so that 
\begin{equation}\label{eq:An}
A_{n} = a_{n} + \sum_{u\in { \mathcal{E}_n}} \frac{a_{u}}{\alpha_{u}} + \sum_{n'>n} \frac{\omega_{n,n'}}{\alpha_{n,n'}}a_{n_{n'}} + \sum_{n'<n}\frac{1}{\alpha_{n',n}}a_{n'_{n}}.
\end{equation}
Furthermore, in terms of basis $\{E_{n}\}_{n\in \mathcal{N}}$ we can write $\mathbf{c}_{a} = \sum_{n\in \mathcal{N}} c_{n}E_{n}$ with 
\begin{equation}\label{eq:cn}
(c_{n})_{n \in \mathcal{N}} = (-I^{orb})^{-1} \cdot (A_{n})_{n\in \mathcal{N}}.
\end{equation}
Note that Lemma \ref{lem:localeq}  implies that the difference of an element $l'\in L'$ and its reduced transform $a$ lies on the sublattice $\mathbb{Z}\langle E_{v}\rangle_{v\notin \mathcal{N}}$. Thus, the projections of $l'$ and $a$ by $\pi_{\mathcal{N}}$ coincide,  hence $\mathbf{c}_{a} = \pi_{\mathcal{N}}(l')$ and we use notation $c_{n}(l')=c_{n}$ for the corresponding coefficients in the basis $\{E_{n}\}_{n\in \mathcal{N}}$. In particular, for the unique representative $r_{h}$ of $h$ we have $c_{n}(r_{h})\in [0,1)$ for all $n\in \mathcal{N}$, and these values are uniquely determined by $h$.

\section{Topological Poincar\'e series} \labelpar{s:tps}

\subsection{\bf Definitions} \labelpar{ss:TPs}
For any $l' = \sum_{v\in \mathcal{V}} l_{v}E_{v}\in L' $ we set $\mathbf{t}^{l'} = \prod_{v\in \mathcal{V}} t_{v}^{l_{v}}$ and let $\mathbb{Z}[H][[L']]$ be the submodule of formal power series $\mathbb{Z}[H][[t_{v}^{\pm 1/|H|}:v\in \mathcal{V}]]$ consisting of series $\sum_{l'\in L'} \mathbf{p}_{l'}\mathbf{t}^{l'}$, with coefficients in the group ring $\mathbb{Z}[H]$. 
We consider the equivariant zeta function  
\begin{equation}\label{eq:Zdef}
f_H(\mathbf{t}) = \prod_{v\in \mathcal{V}}(1 - g_v\mathbf{t}^{E_v^*})^{\delta_v-2},
\end{equation}
where $g_v=[E_v^*] \in H$. Its multivariable Taylor expansion at the origin $Z_{H}(\mathbf{t})$ is called the {\em topological Poincar\'e series} associated with the graph $\Gamma$  
and it has of form
$Z_H(\mathbf{t})=\sum_{l'} p_{l'}[l'] \mathbf{t}^{l'} \in \mathbb{Z}[H][[L']]$. 
It decomposes uniquely into equivariant parts $Z_{H}(\mathbf{t}) = \sum_{h\in H} Z_{h}(\mathbf{t})\cdot h$, where $Z_{h}({\mathbf{t}})=\sum_{[l']=h} p_{l'} \mathbf{t}^{l'}\in \mathbb{Z}[[L']]$. The presence of the group element $[l']$ in the coefficients of $Z_{H}(\mathbf{t})$ is superfluous since $Z_{H}(\mathbf{t})$ can be decomposed into equivariant parts solely by looking at the exponents. However, this group element will become useful when we consider the reduced Poincar\'e series. 

\subsubsection{}%
Moreover, $Z_H(\mathbf{t})$ is supported on the \emph{Lipman cone}  $$\mathcal{S}':=\{l'\in L'\,:\, (l',E_v)\leq 0 \ \mbox{for all $v$}\},$$ which is generated over $\mathbb{Z}_{\geq 0}$ by the duals $E_v^*$, hence $Z_{h}(\mathbf{t})$ is supported in $(l'+L)\cap \mathcal{S}'$, where $l'\in L'$ with $[l']=h$. We write $\mathcal{S}'_{\mathbb{R}}:=\mathcal{S}'\otimes \mathbb{R}$ for the \emph{real Lipman cone}.

\subsection{\bf Motivation and history}\labelpar{ss:mot}
The topological Poincar\'e series was introduced by the work of N\'emethi \cite{NPS}, motivated by the following fact: we may consider the equivariant divisorial Hilbert series $\mathcal{H}(\mathbf{t})$ of a normal surface singularity $(X,o)$ with fixed resolution graph $\Gamma$.
The key point connecting $\mathcal{H}(\mathbf{t})$ with the topology of the link $M$ (or the graph $\Gamma$) is introducing the series $\mathcal{P}(\mathbf{t})= -\mathcal{H}( \mathbf{t}) \cdot \prod_{v\in \mathcal{V}}(1 - t_v^{-1})\in \mathbb{Z}[[L']]$. 
Then the (non-equivariant) series $Z(\mathbf{t}):=\sum_{h\in H}Z_h(\mathbf{t})$ is the `topological candidate' for $\mathcal{P}(\mathbf{t})$. They agree for several singularities, e.g. for splice quotients (see \cite{NCL}), which contain all the rational, minimally elliptic or weighted homogeneous singularities.

More details regarding the analytic motivation can be found in \cite{CDGPs,CDGEq,NPS,NCL}.

\subsection{The reduced Poincar\'e series}\labelpar{ss:redtps}

L\'aszl\'o and N\'emethi \cite[Theorem 5.4.2]{LN} proved that from `Seiberg--Witten invariant point of view' (see Section \ref{ss:reductionb}) the number of variables can be reduced to $|\mathcal{N}|$. Therefore, we will mainly consider the \emph{reduced zeta function} and \emph{reduced Poincar\'e series} given by 
$$
f_H(\mathbf{t}_{\mathcal{N}}) = f_{H}(\mathbf{t})\mid_{t_v=1,v\notin \mathcal{N}}
\qquad\textnormal {and } \qquad
Z_h(\mathbf{t}_{\mathcal{N}}):=Z_h(\mathbf{t})\mid_{t_v=1,v\notin \mathcal{N}},
$$
respectively. 
We introduce notation $\mathbf{t}_{\mathcal{N}}^{x} := \mathbf{t}^{\pi_{\mathcal{N}}(x)}$ for any $x\in L'$.  
\begin{remark}
It is important to emphasize that the number of variables of the reduced zeta function, or series, is a `combinatorial measurement' for the complexity of $M$. Moreover, for some singularities $(X,o)$ whose link is $M$, the reduced series can be compared with other series (or invariants) giving information about the analytic type, cf. \cite{NPS}. 
\end{remark}

\subsection{The decomposition of the reduced Poincar\'e series}\labelpar{ss:eqdec}

We will use multiplicative notation for the group $H$ when we consider $\mathbb{Z}[H]$-coefficients of the zeta function $f_{H}$ and the Poincar\'e series $Z_{H}$. Note that  by Lemma \ref{lem:localeq}(\ref{local:a}) for $u\in \mathcal{E}_{n}$  we have  $(E^{*}_{u}, E^{*}_{n'}) = \alpha_{u}(E^{*}_{n}, E^{*}_{n'})$ for all $n'\in \mathcal{N}$, hence $\big( g_{u}\mathbf{t}_{\mathcal{N}}^{E^{*}_{u}} \big)^{\alpha_{u}} = g_{n}\mathbf{t}_{\mathcal{N}}^{E^{*}_{n}}$. Therefore, we can write
$$
f_{H}(\mathbf{t}_{\mathcal{N}}) 
= 
\frac{\prod_{n\in \mathcal{N}} \big( 1 - g_{n}\mathbf{t}_{\mathcal{N}}^{E^{*}_{n}} \big)^{\delta_{n}-2}}{  \prod_{u\in \mathcal{E}}(1 - g_{u}\mathbf{t}_{\mathcal{N}}^{E^{*}_{u}}) } 
=
\prod_{u\in \mathcal{E} } \Big( \sum_{x_{u}=0}^{\alpha_{u}-1} g_{u}^{x_{u}} \mathbf{t}_{\mathcal{N}}^{x_{u}E^{*}_{u}} \Big) \prod_{n\in \mathcal{N}} \Big( 1 - g_{n} \mathbf{t}_{\mathcal{N}}^{E^{*}_{n}} \Big)^{\delta_{n,\mathcal{N}}-2}. 
$$
Taking its Taylor expansion at the origin we get
\begin{align*}
Z_{H}(\mathbf{t}_{\mathcal{N}}) 
&= 
\prod_{u \in \mathcal{E}} 
\Big( \sum_{x_{u}=0}^{\alpha_{u}-1} g_{u}^{x_{u}} \mathbf{t}_{\mathcal{N}}^{x_{u}E^{*}_{u}} 
\Big)
\prod_{\delta_{n',\mathcal{N}}>2} 
\Big( 1 - g_{n'} \mathbf{t}_{\mathcal{N}}^{E^{*}_{n'}} \Big)^{\delta_{n',\mathcal{N}}-2}
\prod_{\delta_{n,\mathcal{N}}=1} 
\Big( \sum_{x_{n}\geq0} g_{n}^{x_{n}} \mathbf{t}_{\mathcal{N}}^{x_{n}E^{*}_{n}} 
\Big)
\\
&=
\sum_{x\in \mathcal{X}} 
\prod_{\delta_{n',\mathcal{N}}>2} (-1)^{x_{n'}} \binom{\delta_{n',\mathcal{N}}-2}{x_{n'}} 
\prod_{v\in \mathcal{E} \cup \mathcal{N}} g_{v}^{x_{v}} \mathbf{t}_{\mathcal{N}}^{x_{v}E^{*}_{v}},
\end{align*}
where the sum is over the set
$$
\mathcal{X}=
\left\{
x=\sum_{v\in \mathcal{N}\cup\mathcal{E}}x_{v} E^{*}_{v} \in L'\ \Bigg|\, 
\begin{array}{ll}
0 \leq x_{u} < \alpha_{u}, & u\in \mathcal{E}
\\
0\leq x_{n}, & n\notin \widehat{\mathcal{N}}
\\
0\leq x_{n'} \leq \delta_{n',\mathcal{N}} - 2, & n'\in \widehat{\mathcal{N}} 
\end{array}
\right\}.
$$
In particular, the $h$-equivariant part of $Z_{H}(\mathbf{t}_{\mathcal{N}})$ equals
\begin{equation}\label{eq:Zh}
Z_{h}(\mathbf{t}_{\mathcal{N}}) = \sum_{x\in \mathcal{X}_{h}} \prod_{\delta_{n',\mathcal{N}}>2} (-1)^{x_{n'}} \binom{\delta_{n',\mathcal{N}}-2}{x_{n'}} \cdot \mathbf{t}_{\mathcal{N}}^{x},
\end{equation}
where $\mathcal{X}_{h} = \{x\in \mathcal{X} \,|\,  [x]=h\}$.

To describe $\mathcal{X}_{h}$ more explicitely, we fix a reduced lift $a$ of $h$ (cf. Section \ref{ss:lift}) and we define an affine lattice
$$
\mathbb{Z}^{\mathcal{N}}(a) = \left\{\ell = \sum_{n\in \mathcal{N}}\ell_{n}E_{n} \in \mathbb{Z}\langle E_{n}\rangle_{n\in \mathcal{N}} \,\bigg|\,  \ell_{n}+\omega_{n',n}\ell_{n'} \equiv a_{n_{n'}}\ (\,\mathrm{mod}\ \alpha_{n,n'}), \ \forall\, n<n' \right\}.
$$
Moreover, for any $n\in \mathcal{N}$ consider the quasilinear function
\begin{equation}
\begin{split}
N_{a}(\ell,n):=a_{n} &+ \sum_{ n'>n}\frac{\omega_{n,n'}}{\alpha_{n,n'}}a_{n_{n'}}+\sum_{ n'<n}\frac{1}{\alpha_{n,n'}}a_{n_{n'}}\\
&- \Big( b_{n} + \sum_{n'\in \mathcal{N}_{n}} \frac{\omega_{n,n'}}{\alpha_{n,n'}} \Big)\ell_{n}- \sum_{n' \in \mathcal{N}_{n}} \Big( \frac{1}{\alpha_{n,n'}}\ell_{n'}\Big)+
\sum_{u\in \mathcal{E}_{n}} \Big \lfloor  \frac{a_{u} - \omega_{u}\ell_{n}}{\alpha_{u}} \Big\rfloor.
\end{split}
\end{equation}
Then we have the following parametrization of $\mathcal{X}_{h}$.

\begin{prop}\labelpar{prop:correspondence}
\begin{enumerate}[(a)]
\item\label{correspondence:a}
There is a bijection 
$$
\mathcal{S}_{a}= \left\{\ell \in \mathbb{Z}^{\mathcal{N}}(a)  \ \bigg|\, \begin{array}{ll}
0 \leq N_a(\ell,n), & 
n\notin \widehat{\mathcal{N}}
\\
0 \leq N_a(\ell,n) \leq \delta_{n,\mathcal{N}}-2, & 
n\in \widehat{\mathcal{N}}
\end{array} \right\}
\longrightarrow \mathcal{X}_{h}
$$
 given by
$
x_{n} = N_{a}(\ell,n)$ for any $n\in \mathcal{N}$ and $x_{u} = \alpha_{u} \left\{ \dfrac{a_{u}-\omega_{u}\ell_{n}}{\alpha_{u}}\right\}$  for any $u\in \mathcal{E}$.
\item\label{correspondence:b}
We have 
$$
\mathbf{t}_{\mathcal{N}}^{x} = \mathbf{t}^{\mathbf{c}_{a}+\ell} = \prod_{n\in \mathcal{N}} t_{n}^{c_{n}+\ell_{n}}.
$$ 

\item\label{correspondence:b} 
Finally, the $h$-equivariant part of the reduced Poincar\'e series equals
\begin{equation}\label{EqZ}
Z_{h}(\mathbf{t}_{\mathcal{N}})  = \sum_{\ell \in \mathcal{S}_{a}} \prod_{\delta_{n',\mathcal{N}}>2} (-1)^{N_{a}(\ell,n')} \binom{\delta_{n',\mathcal{N}}-2}{N_{a}(\ell,n')}\cdot \mathbf{t}^{\mathbf{c}_a+\ell}.
\end{equation}
\end{enumerate}
\end{prop}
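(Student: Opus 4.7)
The plan is to prove (a), (b), (c) in order. The starting observation is that $[x] = h = [a]$ forces $x - a \in L$, and by the presentation of $H$ from Section \ref{ss:lift}, this is equivalent to the existence of integers $\ell_u, \ell_n, \ell_{n_{n'}}$ satisfying \eqref{R'u}, \eqref{R'n}, \eqref{R'nn'} with $x_{n_{n'}} = 0$ (since $x \in \mathcal{X}$ has no support at chain vertices).

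For (a), plugging $x_{n_{n'}} = 0$ into \eqref{R'nn'} gives
\[
\ell_{n_{n'}} = \frac{a_{n_{n'}} - \ell_n - \omega_{n',n}\ell_{n'}}{\alpha_{n,n'}},
\]
whose integrality is precisely the congruence defining $\mathbb{Z}^{\mathcal{N}}(a)$. The range $0 \leq x_u < \alpha_u$ together with \eqref{R'u} uniquely determines $\ell_u = \lfloor (a_u - \omega_u \ell_n)/\alpha_u \rfloor$ and $x_u = \alpha_u \{(a_u - \omega_u \ell_n)/\alpha_u\}$. Substituting these closed forms for $\ell_u$ and $\ell_{n_{n'}}$ into the right-hand side of \eqref{R'n} and simplifying produces $x_n = N_a(\ell, n)$: the rational coefficients $\omega_{n,n'}/\alpha_{n,n'}$ and $1/\alpha_{n,n'}$ appearing in $N_a$ arise from the denominator $\alpha_{n,n'}$ of $\ell_{n_{n'}}$, and the $\tau_{n,n'}\ell_{n'}$ terms in \eqref{R'n} are absorbed via the chain identity $\omega_{n,n'}\omega_{n',n} = \alpha_{n,n'}\tau_{n,n'} + 1$. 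The constraints $0 \leq x_n$ for $n \notin \widehat{\mathcal{N}}$ and $0 \leq x_n \leq \delta_{n,\mathcal{N}}-2$ for $n \in \widehat{\mathcal{N}}$ translate verbatim into the inequalities defining $\mathcal{S}_a$, giving the bijection $\ell \leftrightarrow x$.

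For (b), since $\mathbf{c}_a = \pi_{\mathcal{N}}(a)$ by definition and $x - a \in L$, we have $\pi_{\mathcal{N}}(x) = \mathbf{c}_a + \pi_{\mathcal{N}}(x - a)$, so it suffices to prove $\pi_{\mathcal{N}}(x - a) = \sum_n \ell_n E_n$. Decomposing $x - a = \sum_u \ell_u R_u + \sum_n \ell_n R_n + \sum_{n<n'} \ell_{n_{n'}} R_{n_{n'}}$ as an element of $L$, Lemma \ref{lem:localeq}(\ref{local:a}) identifies $R_u = E_n^* - \alpha_u E_u^*$ with $-\sum_{w \in (n,u]} \det_{(n,w)} E_w$, and Lemma \ref{lem:localeq}(\ref{local:b}) with $v = n'$ identifies $R_{n_{n'}}$ with an analogous expression in $\mathbb{Z}\langle E_w \rangle_{w \notin \mathcal{N}}$; both are killed by $\pi_{\mathcal{N}}$. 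For $R_n$, comparing with $E_n = -b_n E_n^* - \sum_{w \sim n} E_w^*$ and using Lemma \ref{lem:localeq} to express each $E_w^*$ for $w \sim n$ in terms of generator $E^*$'s plus non-node lattice elements shows that $R_n - E_n \in \mathbb{Z}\langle E_w \rangle_{w \notin \mathcal{N}}$, so $\pi_{\mathcal{N}}(R_n) = E_n$. Summing yields $\pi_{\mathcal{N}}(x - a) = \sum_n \ell_n E_n$.

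Part (c) is then immediate: substituting the parametrization from (a) and the monomial identity from (b) into \eqref{eq:Zh}, and observing that the binomial factor $\binom{\delta_{n',\mathcal{N}}-2}{x_{n'}}$ is trivial for $\delta_{n',\mathcal{N}} \leq 2$ and nonzero only for $n' \in \widehat{\mathcal{N}}$ subject to $0 \leq x_{n'} \leq \delta_{n',\mathcal{N}}-2$, gives the stated formula. The main obstacle lies in the simplification of (a) leading to $N_a(\ell, n)$: it is mechanical but requires careful bookkeeping across both $n<n'$ and $n'<n$ chains, with the chain identity $\omega_{n,n'}\omega_{n',n} - \alpha_{n,n'}\tau_{n,n'} = 1$ playing the central role. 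The corresponding lattice computation in (b), namely $R_n \equiv E_n \pmod{\mathbb{Z}\langle E_w \rangle_{w \notin \mathcal{N}}}$, is of the same nature but lighter.
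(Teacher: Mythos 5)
Your proposal is correct and follows the paper's strategy for parts (a) and (c) almost verbatim: eliminate $\ell_{u}$ and $\ell_{n_{n'}}$ from the relations \eqref{R'u}, \eqref{R'n}, \eqref{R'nn'} with $x_{n_{n'}}=0$, read off the congruence defining $\mathbb{Z}^{\mathcal{N}}(a)$ and the formulas for $x_u$ and $x_n=N_a(\ell,n)$ (with the identity $\omega_{n,n'}\omega_{n',n}=\alpha_{n,n'}\tau_{n,n'}+1$ cancelling the $\tau$-terms, exactly as needed), then substitute into (\ref{eq:Zh}). The one place you diverge is part (b): the paper proves $\mathbf{c}_x=\mathbf{c}_a+\ell$ by assembling the identity $x_n+\sum_{u\in\mathcal{E}_n}x_u/\alpha_u=A_n-e_n\ell_n-\sum_{n'\in\mathcal{N}_n}\ell_{n'}/\alpha_{n,n'}$ into matrix form and inverting $-I^{orb}$ via (\ref{eq:cn}), whereas you decompose $x-a=\sum_v\ell_v R_v$ in $L$ and compute $\pi_{\mathcal{N}}(R_u)=\pi_{\mathcal{N}}(R_{n_{n'}})=0$, $\pi_{\mathcal{N}}(R_n)=E_n$ directly from Lemma \ref{lem:localeq}. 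Both routes are valid and of comparable length; yours is more transparently lattice-theoretic and avoids the orbifold matrix, while the paper's version produces the intermediate identity (\ref{eq:Nabar}) that is reused elsewhere (e.g.\ in the definition of $\overline N_a$). Note that in the paper the identity $\mathbf{c}_x=\mathbf{c}_a+\ell$ is also what certifies invertibility of the map in (a) (it recovers $\ell$ uniquely from $x$); your phrasing of the bijection in (a) is a little quick on this point, but your argument for (b) supplies exactly the missing uniqueness, so the proof as a whole is complete.
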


\begin{remark}\

\begin{enumerate}[(a)]
 \item In fact, one can write (\ref{EqZ}) in the form 
\begin{equation}\label{EqZgen}
Z_{h}(\mathbf{t}_{\mathcal{N}})  = \sum_{\ell \in \mathcal{S}_{a}} \prod_{n\in\mathcal{N}} (-1)^{N_{a}(\ell,n)} \binom{\delta_{n,\mathcal{N}}-2}{N_{a}(\ell,n)}\cdot \mathbf{t}^{\mathbf{c}_a+\ell},  
\end{equation}
if we regard $\binom{\delta_{n,\mathcal{N}}-2}{N_{a}(\ell,n)}$ to be the generalized binomial coefficient.

\item In Section \ref{s:monoid} we give a more explicit description of the set $\mathcal{S}_{a}$, which will lead to the rational form of $Z_{h}(\mathbf{t}_{\mathcal{N}})$ in Subsection \ref{ss:Zrational}. 
\end{enumerate}
\end{remark}

\begin{proof}
\begin{enumerate}[(a)]
\item
Since both $x$ and $a$ are reduced lifts of $h$, there are $\ell_{u}, \ell_{n}, \ell_{n_{n'}}\in \mathbb{Z}$ such that (\ref{R'u}), (\ref{R'n}) and (\ref{R'nn'}) hold with $x_{n_{n'}}=0$ (cf. Section \ref{ss:lift}). We can eliminate $\ell_{u}$'s and $\ell_{n_{n'}}$'s as follows. The relation (\ref{R'nn'}) is equivalent with
\begin{equation}\label{eq:mod}
a_{n_{n'}} \equiv \ell_{n} + \omega_{n',n} \ell_{n'}\ (\textnormal{mod }\alpha_{n,n'}) 
\qquad \textnormal{and}\qquad
\ell_{n_{n'}} = \frac{a_{n_{n'}} - \ell_{n} - \omega_{n',n}\ell_{n'} }{ \alpha_{n,n'}}, \qquad (n<n').
\end{equation}
The conditions $0\leq x_{u}< \alpha_{u}$ and (\ref{R'u}) are equivalent with 
\begin{equation}\label{EqCe}
x_{u} = \alpha_{u} \cdot \left\{ \frac{a_{u} - \omega_u\ell_{n}}{\alpha_u}\right\}
\qquad \textnormal{and}\qquad
\ell_{u} = \Big \lfloor  \frac{a_{u} - \omega_u \ell_{n}}{\alpha_{u}} \Big\rfloor \in \mathbb{Z}.
\end{equation}
Moreover, by substituting $\ell_{n_{n'}}$ from (\ref{eq:mod}) and $\ell_{u}$ from (\ref{EqCe}) into (\ref{R'n}) we get $x_{n} = N_{a}(\ell,n)$. Thus, we have defined a map from $\mathcal{S}_{a} \to \mathcal{X}_{h}$.
To show that this map is invertible, note that
\begin{equation}\label{eq:Nabar}
x_{n}+\sum_{u\in \mathcal{E}_{n}} \frac{x_{u}}{\alpha_{u}} 
= 
N_{a}(\ell,n) + \left\{ \frac{a_{u} - \omega_u\ell_{n}}{\alpha_u}\right\}
=
A_{n} - e_n\ell_{n} - \sum_{n'\in \mathcal{N}_{n}}\frac{1}{\alpha_{n,n'}}\ell_{n'},
\end{equation}
where  $A_{n}$ was considered in (\ref{eq:An}) 
as the coefficient of the projection $\mathbf{c}_{a}=\pi_{\mathcal{N}}(a)$ in the $\{E^{*}_{n}\}_{n\in \mathcal{N}}$-basis. We can write (\ref{eq:Nabar}) in matrix form $\big( x_{n}+\sum_{u\in \mathcal{E}_{n}} \frac{x_{u}}{\alpha_{u}} \big)_{n} = (A_{n})_{n} - I^{orb}\cdot (\ell_{n})_{n}$, which can be reformulated simply as
\begin{equation}\label{eq:x=a-l}
\mathbf{c}_{x} = \mathbf{c}_{a} + \ell.
\end{equation} 
This shows  that $\mathcal{S}_{a}\to \mathcal{X}_{h}$ is invertible. 
\item 
Follows from (\ref{eq:x=a-l}).
\item
Follows from (\ref{eq:Zh}) and the previous two parts of the proposition.
\end{enumerate}
\end{proof}

\begin{remark}
\begin{enumerate}[(a)]
\item In the case when $M$ is a Seifert rational homology sphere, or equivalently, the graph $\Gamma$ is star-shaped, the reduced Poincar\'e series has only one variable associated with the central vertex $n$ of $\Gamma$. Since $\mathcal{S}_{a}=\{\ell=\ell_{n}E_{n}\in\mathbb{Z}E_{n} \ | \ 0\leq N_{a}(\ell,n)\}\subset (-c_{n}+\mathbb{R}_{\geq0})\cap \mathbb{Z}$, we can rephrase (\ref{EqZgen}) with  
$$Z_{h}(t_{n})=\sum_{\ell_{n}\geq -c_{n}} \max\{0,N_{a}(\ell,n)+1\}\cdot t^{c_{n}+\ell_{n}}.$$
The formula was given in this form in \cite{LN}. Similar computations for Seifert manifolds can be found also in N\'emethi and Nicolaescu \cite{NN2} and Neumann \cite{neumannA}. 

\item Assume $\Gamma$ has two nodes $n_0$ and $\widetilde n_0$ with $\delta_{n,\mathcal{N}}=1$ and the subgraph $[n_0,\widetilde n_0]$ contains all the other nodes with $\delta_{n,\mathcal{N}}=2$. Then $Z_{h}(\mathbf{t}_{\mathcal{N}})=
\sum_{\ell\in \mathcal{S}_{a}} \mathbf{t}^{\mathbf{c}_{a}+\ell}$ is the generating series of $\mathcal{S}_{a}$. In particular, this formula for the two-node case ($\widehat{\mathcal{N}}=\emptyset$) can be found in \cite[Section 7]{LN}.

\item If $\Gamma$ is the plumbing graph of the link of a 2-dimensional Newton nondegenerate hypersurface singularity, then \cite{BNnewt} implies  $\delta_{n,\mathcal{N}}\leq 3$ for any $n\in \mathcal{N}$. Hence, by (\ref{EqZ}) the coefficients of $Z_{h}(\mathbf{t}_{\mathcal{N}})$ can be  either $N_{a}(\ell,n)+1$  if $\Gamma$ is star-shaped, or $\pm 1$ otherwise. Similar consequences regarding this example can be found in \cite[Lemma 7.1.12]{Baldur}.
\end{enumerate} 
 
\end{remark}

\subsection{\bf Alternative decomposition}
\labelpar{s:alter-decomp}
We discuss an alternative decomposition of the Poincar\'e series, which emphasizes more the relation with non-normal affine monoids and also simplifies the proof of Lemma \ref{Thm2}.
Thus, we write the reduced zeta function in such a way that for every node $n$ the term $(1-g_{n}\mathbf{t}_{\mathcal{N}}^{E_{n}^{*}})$ appears in the denominator, that is
\begin{align*}
f_{H}(\mathbf{t}_{\mathcal{N}}) 
&=
\frac{
\prod_{ u \in \mathcal{E} } \Big( \sum_{x_{u}=0}^{\alpha_{u}-1} g_{u}^{x_{u}} \mathbf{t}_{\mathcal{N}}^{x_{u}E^{*}_{u}} \Big)
\prod_{ n \in \mathcal{N} } \Big( 1 - g_{n} \mathbf{t}_{\mathcal{N}}^{E_{n}^{*}} \Big)^{\delta_{n,\mathcal{N}}-1}
}{
\prod_{ n \in \mathcal{N} } \Big( 1 - g_{n} \mathbf{t}_{\mathcal{N}}^{E_{n}^{*}} \Big).
}
\end{align*}
It yields Taylor expansion at the origin
\begin{align*}
Z_{H}(\mathbf{t}_{\mathcal{N}})
&=
\prod_{ u \in \mathcal{E} } \Big( \sum_{x_{u}=0}^{\alpha_{u}-1} g_{u}^{x_{u}} \mathbf{t}_{\mathcal{N}}^{x_{u}E^{*}_{u}} \Big)
\prod_{ n \in \mathcal{N} } \Big( \sum_{k_{n}=0}^{\delta_{n,\mathcal{N}}-1} (-1)^{k_{n}} \binom{\delta_{n,\mathcal{N}}-1}{k_{n}} g_{n}^{k_{n}} \mathbf{t}_{\mathcal{N}}^{k_{n}E^{*}_{n}} \Big)
\prod_{ n \in \mathcal{N} } \Big( \sum_{x_{n}\geq 0} g_{n}^{x_{n}} \mathbf{t}_{\mathcal{N}}^{x_{n}E^{*}_{n}} \Big)
\\
&=
%\prod_{n\in \mathcal{N}} \sum_{k_{n}=0}^{\delta_{n,\mathcal{N}}-1}  
\sum_{\substack{0\leq k_{n} \leq \delta_{n,\mathcal{N}}-1\\ \forall n\in \mathcal{N}}}
\bigg[ \prod_{n\in \mathcal{N}}(-1)^{k_{n}}\binom{\delta_{n,\mathcal{N}}-1}{k_{n}} \bigg] \sum_{y\in \mathcal{X}(k)} \prod_{v\in \mathcal{E} \cup \mathcal{N}} g_{v}^{y_{v}} \mathbf{t}_{\mathcal{N}}^{y_{v}E^{*}_{v}},
\end{align*}
where $k=\sum_{n\in \mathcal{N}} k_{n}E^{*}_{n}$ and the latter sum runs over the index set
$$
\mathcal{X}(k) = \left\{ y=\sum_{v\in \mathcal{E} \cup \mathcal{N}}y_{v}E^{*}_{v} \ \Bigg|\ 
\begin{array}{ll}
0 \leq y_{u} < \alpha_{u}, & u\in \mathcal{E}
\\
k_{n} \leq y_{n}, & n \in \mathcal{N}
\end{array}
\right\}.
$$
For simplicity, we introduce short notations $(-1)^{k}\binom{\delta-1}{k} := \prod_{n\in \widehat{\mathcal{N}}} (-1)^{k_{n}} \binom{\delta_{n,\mathcal{N}}-1}{k_{n}}$ and $0\leq k \leq \delta-1$ instead of $0\leq k_{n} \leq \delta_{n,\mathcal{N}}-1$ for all $n\in \widehat{\mathcal{N}}$. 
Consequently, the $h$-equivariant part equals
$$
Z_{h}(\mathbf{t}_{\mathcal{N}}) =  \sum_{0\leq k \leq \delta-1} (-1)^{k} \binom{\delta-1}{k} \sum_{y\in \mathcal{X}_{h}(k)}   \mathbf{t}_{\mathcal{N}}^{y},
$$
where $\mathcal{X}_{h}(k) = \{y\in \mathcal{X}(k) \,|\, [y]=h\}$. Similarly to Proposition \ref{prop:correspondence}, we have a bijection
\begin{equation}\label{Eq:bijection2}
\mathcal{M}_{a}(k) := \left\{ \ell \in \mathbb{Z}^{\mathcal{N}}(a)  \,\big|\, N_{a}(\ell,n)\geq k_{n}, \ \forall n\in \mathcal{N} \right\} 
\longrightarrow 
\mathcal{X}_{h}(k)
\end{equation}
given by $y_{n} = N_{a}(\ell, n)$ for any $n\in \mathcal{N}$ and $y_{u} = \alpha_{u} \left\{ \dfrac{a_{u} - \omega_{u}\ell_{n}}{\alpha_{u}} \right\}$ for any $u\in \mathcal{E}$, which yields the following form of the Poincar\'e series
\begin{equation}\label{Eq:Zh-alternative}
Z_{h}(\mathbf{t}_{\mathcal{N}}) 
= 
 \sum_{0\leq k \leq\delta-1} (-1)^{k} \binom{\delta-1}{k} \sum_{\ell \in \mathcal{M}_{a}(k)} \mathbf{t}^{\ell+\mathbf{c}_{a}}.
\end{equation}

\section{Non-normal affine monoids and modules}\labelpar{s:monoid}

One can consider the `normalization' of the quasilinear function $N_{a}(\ell,n)$ by introducing the linear function
$$
\overline N_{a}(\ell,n):=N_{a}(\ell,n) + \sum_{u\in \mathcal{E}_{n}} \left\{ \frac{a_{u} - \omega_{u}\ell_{n}}{\alpha_{u}} \right\} 
\stackrel{(\ref{eq:An})}{=}A_{n} - e_{n}\ell_{n} - \sum_{n'\in \mathcal{N}_{n}} \frac{1}{\alpha_{n,n'}} \ell_{n'}.
$$
In particular, notice that for any $n\in \mathcal{N}$ with $\delta_{n,\mathcal{E}}=0$ we have equality $\overline{N}_{a}(\ell,n) = N_{a}(\ell,n)$. 

Associated with the pair $(\Gamma, a)$, we define the sets 
\vspace{0.3cm}
\begin{equation*}
 \mathcal{M}_{a}:=
\{ \ell\in \mathbb{Z}^{\mathcal{N}}(a)  \,|\,  N_{a}(\ell,n)\geq 0,\ \forall n\in \mathcal{N}\} \ \ \mbox{and} \ \ 
\overline{\mathcal{M}}_{a}:=
\{ \ell\in \mathbb{Z}^{\mathcal{N}}(a)  \,|\,  \overline{N}_{a}(\ell,n)\geq 0,\ \forall n\in \mathcal{N}\}.
\vspace{0.3cm}
\end{equation*}
From $\overline{N}_{a}(\ell,n) \geq N_{a}(\ell,n)$ follows that $\mathcal{M}_{a} \subset \overline{\mathcal{M}}_{a}$. Moreover, if we consider the real cone $\mathcal{C}^{orb} := \pi_{\mathcal{N}} (\mathcal{S}'_{\mathbb{R}}  ) = \{ \ell = \sum_{n\in \mathcal{N}}\ell_{n} E_{n}  \,|\, -I^{orb} \cdot (\ell_{n})_{n} \geq 0 \}
$ then 
\begin{align*}
\overline{\mathcal{M}}_{a} = (\mathcal{C}^{orb} - \mathbf{c}_{a}) \cap \mathbb{Z}^{\mathcal{N}}(a).
\end{align*}
\begin{remark} 
Notice that by Remark \ref{rk:smallergen} we can choose $a$ such that every $a_{n_{n'}}=0$, hence for such an $a$ the lattice $\mathbb{Z}^{\mathcal{N}}(a)$ is independent of $a$.
\end{remark}

\begin{lemma}\label{Lm1}
\begin{enumerate}[(a)]
\item\label{Lm1a}
$\mathcal{M}_{ 0}$ and 
$\overline{\mathcal{M}}_{ 0}$ are affine monoids.
$\overline{\mathcal{M}}_{0}$ is the normalization of  $\mathcal{M}_{0}$.
\item\label{Lm1b}
$\mathcal{M}_{a}$ and $ \overline{\mathcal{M}}_{a}$ are finitely generated 
$\mathcal{M}_{ 0}$-modules,
$\mathcal{M}_{a}$ is a submodule of $ \overline {\mathcal{M}}_{a}$. 
\end{enumerate}
\end{lemma}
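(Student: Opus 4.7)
The plan is to treat the linear companion $\overline{\mathcal{M}}_a$ first via Gordan's lemma, and then transfer structural properties to $\mathcal{M}_a$ by exploiting subadditivity and $\alpha_u$-periodicity of the floor function.

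For closure, I would observe that $\overline{N}_a$ is affine-linear in $\ell$, giving $\overline{N}_a(\ell+\ell_0,n) = \overline{N}_a(\ell,n) + \overline{N}_0(\ell_0,n)$. Since $\mathbb{Z}^{\mathcal{N}}(a)$ is a coset of the sublattice $\mathbb{Z}^{\mathcal{N}}(0)$, adding an element of $\overline{\mathcal{M}}_0$ to one of $\overline{\mathcal{M}}_a$ keeps us in $\overline{\mathcal{M}}_a$, and taking $a=0$ makes $\overline{\mathcal{M}}_0$ a monoid. For $\mathcal{M}_a$ this linear identity fails because of the floor terms, but the universal inequality $\lfloor x+y\rfloor \geq \lfloor x\rfloor + \lfloor y\rfloor$ applied termwise yields
\[
N_a(\ell+\ell_0,n) \geq N_a(\ell,n) + N_0(\ell_0,n),
\]
which simultaneously makes $\mathcal{M}_0$ a monoid and $\mathcal{M}_a$ an $\mathcal{M}_0$-module, while the inclusion $\mathcal{M}_a \subseteq \overline{\mathcal{M}}_a$ is immediate from $N_a \leq \overline{N}_a$.

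For finite generation and affineness, since $-I^{\mathrm{orb}}$ is positive definite, $\mathcal{C}^{\mathrm{orb}}$ is a pointed rational polyhedral cone, and $\mathbb{Z}^{\mathcal{N}}(0)$ is a full-rank sublattice of $\mathbb{Z}\langle E_n\rangle_{n\in\mathcal{N}}$. Gordan's lemma then exhibits $\overline{\mathcal{M}}_0 = \mathcal{C}^{\mathrm{orb}} \cap \mathbb{Z}^{\mathcal{N}}(0)$ as a finitely generated affine monoid, and its torsor version makes $\overline{\mathcal{M}}_a$ a finitely generated $\overline{\mathcal{M}}_0$-module. The normalization step below upgrades this to finite $\mathcal{M}_0$-module generation, and $\mathcal{M}_a \subseteq \overline{\mathcal{M}}_a$ is then a sub-$\mathcal{M}_0$-module of a finitely generated one; Noetherianness of the monoid ring $\mathbb{Z}[\mathcal{M}_0]$ forces $\mathcal{M}_a$ itself to be finitely generated.

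For the normalization, set $N := \mathrm{lcm}\{\alpha_u : u \in \mathcal{E}\}$. For every $\ell \in \mathbb{Z}^{\mathcal{N}}(0)$ each fractional part $\{(-\omega_u N\ell_n)/\alpha_u\}$ vanishes, so $N_0(N\ell,n_0) = N \cdot \overline{N}_0(\ell,n_0)$. Hence $N \cdot \overline{\mathcal{M}}_0 \subseteq \mathcal{M}_0$, and the two monoids share the rational cone $\mathcal{C}^{\mathrm{orb}}$. Moreover $\overline{N}_0 - N_0 \in [0,\delta_{n_0,\mathcal{E}})$ is uniformly bounded, so any element of $\overline{\mathcal{M}}_0$ lying deep enough inside $\mathcal{C}^{\mathrm{orb}}$ automatically sits in $\mathcal{M}_0$; translating an arbitrary element of $\overline{\mathcal{M}}_0$ by a large interior element of $\mathcal{M}_0$ expresses it as a difference of two elements of $\mathcal{M}_0$, proving $\mathbb{Z}\mathcal{M}_0 = \mathbb{Z}\overline{\mathcal{M}}_0 = \mathbb{Z}^{\mathcal{N}}(0)$ (the last equality using full-dimensionality of $\mathcal{C}^{\mathrm{orb}}$). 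Combining, $\overline{\mathcal{M}}_0 = \mathcal{C}^{\mathrm{orb}} \cap \mathbb{Z}\mathcal{M}_0$ is the normalization of $\mathcal{M}_0$.

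The main obstacle is precisely that $\mathcal{M}_0$ is genuinely non-normal because of the floor terms, so Gordan's lemma cannot be invoked directly on it. The plan circumvents this by comparing every object with its linear sibling via the identity $\overline{N}_a - N_a = \sum_u \{(a_u - \omega_u\ell_n)/\alpha_u\}$: floor subadditivity gives the module closure, and $\alpha_u$-periodicity manufactures enough elements of $\mathcal{M}_0$ inside $\overline{\mathcal{M}}_0$ to secure the group and cone equalities needed for the normalization statement.
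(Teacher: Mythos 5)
Your proposal is correct in substance and follows the same overall route as the paper, whose own proof is only two sentences long: part (a) is declared ``elementary'' and part (b) is reduced to the chain ``$\overline{\mathcal{M}}_a$ is finitely generated over $\overline{\mathcal{M}}_0$, and $\overline{\mathcal{M}}_0$ is finitely generated as an $\mathcal{M}_0$-module'', the latter by citing Bruns--Gubeladze, Theorem 2.12. What you supply is the actual content behind both steps: the subadditivity $\lfloor x+y\rfloor\ge\lfloor x\rfloor+\lfloor y\rfloor$ applied to the leg terms of $N_a$, which is precisely the ``elementary'' observation giving the monoid and module closure (the linear parts add exactly, the floor parts only improve), and the conductor-type inclusion $N\overline{\mathcal{M}}_0\subseteq\mathcal{M}_0$ with $N=\mathrm{lcm}\{\alpha_u\}$, which makes both the identification of $\overline{\mathcal{M}}_0$ as the normalization and the module-finiteness of $\overline{\mathcal{M}}_0$ over $\mathcal{M}_0$ self-contained rather than a citation (note that the cited theorem anyway presupposes the monoid is affine, so your explicit conductor is genuinely useful here). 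The intersection $\overline{\mathcal{M}}_0=\mathcal{C}^{orb}\cap\mathbb{Z}^{\mathcal{N}}(0)$ and the group/cone equalities you derive match the paper's description $\overline{\mathcal{M}}_a=(\mathcal{C}^{orb}-\mathbf{c}_a)\cap\mathbb{Z}^{\mathcal{N}}(a)$.

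One loose end you should close: part (a) asserts that $\mathcal{M}_0$ is an \emph{affine} monoid, i.e.\ finitely generated, and your final Noetherian argument for $\mathcal{M}_a$ also needs $\mathbb{Z}[\mathcal{M}_0]$ to be Noetherian, which for monoid rings is equivalent to finite generation of $\mathcal{M}_0$; you never establish this explicitly, and it is not automatic for a non-normal submonoid of an affine monoid in general. It does follow from what you already have: the sandwich $N\overline{\mathcal{M}}_0\subseteq\mathcal{M}_0\subseteq\overline{\mathcal{M}}_0$ gives $\mathbb{Z}[N\overline{\mathcal{M}}_0]\subseteq\mathbb{Z}[\mathcal{M}_0]\subseteq\mathbb{Z}[\overline{\mathcal{M}}_0]$ with the outer ring a finite module over the Noetherian inner one, so $\mathbb{Z}[\mathcal{M}_0]$ is a finite module over $\mathbb{Z}[N\overline{\mathcal{M}}_0]$, hence a finitely generated $\mathbb{Z}$-algebra whose generators may be taken to be monomials, whence $\mathcal{M}_0$ is a finitely generated monoid. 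Adding a sentence to that effect completes the argument.
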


\begin{proof}
(\ref{Lm1a}) is elementary. Part (\ref{Lm1b}) follows from \cite[Theorem 2.12]{BG}, that is 
$\overline{\mathcal{M}}_{a}$ is finitely generated over $\overline{\mathcal{M}}_{ 0}$, but $\overline{\mathcal{M}}_{ 0}$ itself is 
finitely generated as an $\mathcal{M}_{ 0}$-module.
\end{proof}

\subsection{\bf The set of holes}\labelpar{sssec:holes}

The \emph{holes} of the $\mathcal{M}_{0}$-module $\mathcal{M}_{a}$ is defined to be the set $\overline{\mathcal{M}_{a}}\setminus
\mathcal{M}_{a}$. By \cite[4.36]{BG}, this is small, in the sense that it is contained in finitely many hyperplanes parallel to the facets of 
$\mathcal{M}_{0}$, that is of $\mathcal{C}^{orb}$. More details on the decompositions for the set of holes in general can be found in \cite{BG} and \cite{K}. In the following, we describe $\overline{\mathcal{M}_{a}}\setminus
\mathcal{M}_{a}$ for the present special case.

\begin{lemma}\label{Lm2}
Fix $\kappa\geq0$ and a reduced lift $a$. Then there exists $\mathfrak{v}_{n}\in \mathcal{M}_{0}$, $n\in \mathcal{N}$ such that $\mathbb{R}_{\geq0}\langle \mathfrak{v}_{n}\rangle_{n\in \mathcal{N}} = \mathcal{C}^{orb}$ and satisfying the following properties: for any $\ell \in \overline{\mathcal{M}}_{a}$ one has
\begin{enumerate}[(a)]
\item\label{Lm2-a}
$N_{a}(\ell + \mathfrak{v}_{n},n')=N_{a}(\ell, n')$ for all $n'\neq n$,

\item\label{Lm2-b}
$N_{a}(\ell + \mathfrak{v}_{n},n)\geq \kappa$.
\end{enumerate}
\end{lemma}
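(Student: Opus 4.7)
The plan is to set $\mathfrak{v}_n := M\cdot\pi_{\mathcal{N}}(E_n^*)$ for a single integer $M>0$ depending on $\kappa$ and the combinatorics of $\Gamma$. By (\ref{eq:IorbInverse}) and (\ref{eq:DETsgr}), the coordinates of $\pi_{\mathcal{N}}(E_n^*)$ in the $\{E_{n'}\}_{n'\in\mathcal{N}}$-basis are $(-I^{orb})^{-1}_{n',n}=-(E_n^*,E_{n'}^*)=\det_{\Gamma\setminus[n,n']}/\det_\Gamma$, all strictly positive. Moreover the rays $\mathbb{R}_{\geq 0}\pi_{\mathcal{N}}(E_n^*)$, being the columns of $(-I^{orb})^{-1}$, are exactly the extreme rays of $\mathcal{C}^{orb}$, so $\mathbb{R}_{\geq 0}\langle \mathfrak{v}_n\rangle_{n\in \mathcal{N}}=\mathcal{C}^{orb}$ for any $M>0$.

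The key observation is that the linear function $\overline{N}_a$ satisfies $\overline{N}_a(\ell+v,n')-\overline{N}_a(\ell,n')=[(-I^{orb})v]_{n'}$ (viewing $v$ as a column vector in the $\{E_n\}_{n\in\mathcal{N}}$-basis). Since $\pi_{\mathcal{N}}(E_n^*)$ is the $n$-th column of $(-I^{orb})^{-1}$, the change equals $M\delta_{n,n'}$ for $v=\mathfrak{v}_n$:
$$
\overline{N}_a(\ell+\mathfrak{v}_n,n')-\overline{N}_a(\ell,n')=M\delta_{n,n'} \qquad\text{for all } n'\in \mathcal{N}.
$$
To transfer this identity to the quasilinear $N_a$, the fractional parts $\{(a_u-\omega_u\ell_{n'})/\alpha_u\}$ with $u\in\mathcal{E}_{n'}$ must be unchanged after the shift, i.e.\ one needs $\alpha_u\mid \mathfrak{v}_{n,n'}$ (using $\gcd(\omega_u,\alpha_u)=1$). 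I therefore take $M=\det_\Gamma\cdot K$ with $K$ divisible by every Seifert invariant $\alpha$ attached to $\Gamma$ (i.e.\ by $\mathrm{lcm}\,\big(\{\alpha_u:u\in\mathcal{E}\}\cup\{\alpha_{n_0,n_1}:n_0<n_1\}\big)$) and with $K\geq \kappa+\max_{n\in\mathcal{N}}\delta_{n,\mathcal{E}}$. Condition (a) then follows immediately, and for (b) the assumption $\ell\in \overline{\mathcal{M}}_a$ gives $\overline{N}_a(\ell,n)\geq 0$, hence $N_a(\ell,n)\geq -\delta_{n,\mathcal{E}}$ and $N_a(\ell+\mathfrak{v}_n,n)=N_a(\ell,n)+M\geq \kappa$.

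What remains is to check $\mathfrak{v}_n\in\mathcal{M}_0$. The congruences $\mathfrak{v}_{n,n_0}+\omega_{n_1,n_0}\mathfrak{v}_{n,n_1}\equiv 0\pmod{\alpha_{n_0,n_1}}$ defining $\mathbb{Z}^{\mathcal{N}}(0)$ are trivially satisfied by the choice of $M$ (both summands are individually multiples of $\alpha_{n_0,n_1}$). Non-negativity of $N_0(\mathfrak{v}_n,\cdot)$ is automatic as well: since $a_u=0$ and $\alpha_u\mid \mathfrak{v}_{n,n'}$, all floor terms drop, leaving $N_0(\mathfrak{v}_n,n')=\overline{N}_0(\mathfrak{v}_n,n')=M\delta_{n,n'}\geq 0$. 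The only subtle aspect of the argument is the divisibility bookkeeping in the choice of $M$; the geometric content --- that a large enough displacement along the extreme ray $\pi_{\mathcal{N}}(E_n^*)$ is admissible in $\mathcal{M}_0$ while fixing the other coordinate functions of $N_a$ --- is forced purely by the linearity of $\overline{N}_a$.
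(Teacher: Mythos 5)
Your proposal is correct and follows essentially the same route as the paper: both take $\mathfrak{v}_n$ to be a suitably large integer multiple of $\pi_{\mathcal{N}}(E_n^*)$, use $\overline{N}_0(\lambda\pi_{\mathcal{N}}(E_n^*),n')=\lambda\delta_{n,n'}$ via $(-I^{orb})^{-1}$, impose divisibility by the $\alpha_u$'s so that the fractional-part corrections are unchanged, and bound $N_a\geq \overline{N}_a-\delta_{n,\mathcal{E}}$ to get (b). The only (harmless) difference is that you fix one uniform multiplier $M=\det_\Gamma\cdot K$ and spell out the divisibility bookkeeping that the paper leaves implicit in the phrase ``$\lambda_n$ sufficiently large.''
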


\begin{proof}
We can choose $\mathfrak{v}_{n} = \lambda_{n}\pi_{\mathcal{N}}(E^{*}_{n})$ for $\lambda_{n}$ sufficiently large. Indeed, for $\mathfrak{v}_{n} = \sum_{n'\in \mathcal{N}}\mathfrak{v}_{n,n'}E_{n'} \in \mathbb{Z}^{\mathcal{N}}(0)$ such that $\{\omega_{u}\mathfrak{v}_{n,n'}/\alpha_{u}\}=0$ for any $n'\in \mathcal{N}$ and $u\in \mathcal{E}_{n'}$ we have $N_{a}(\ell+\mathfrak{v}_{n},n')=N_{a}(\ell,n')+\overline{N}_{0}(\mathfrak{v}_{n},n')$. By (\ref{eq:IorbInverse}) note that $\overline{N}_{0}(\mathfrak{v}_{n},n')=0$ for any $n'\neq n$, which implies (\ref{Lm2-a}).  Finally, we have $N_{a}(\ell, n) \geq \overline{N}_{a}(\ell, n) - |\mathcal{E}_{n}|$. Hence, if we choose $\lambda_{n}$ such that $\overline{N}_{0}(\mathfrak{v}_{n},n) = \lambda_{n}\geq |\mathcal{E}_{n}|+\kappa$ then we have $N_{a}(\ell+\mathfrak{v}_{n}, n)\geq \kappa$ for any $\ell \in \overline{\mathcal{M}}_{a}$. 
\end{proof}

\begin{remark}\label{rem:basis}
\begin{enumerate}[(a)] 
\item\label{rem:basis-a}
The vectors $\mathfrak{v}_{n}$ given in the  above proof does not depend on $a$, hence they can be chosen for all $a$. 
%\item\label{rem:basis-b}
%In the end of the proof if we choose $\lambda_{n}\geq |\mathcal{E}|+\kappa$ then $\mathfrak{v}_{n}$ satisfies stronger relation $N_{a}(\ell+\mathfrak{v}_{n},n)\geq \kappa$ for any $\ell \in \overline{M}_{a}$. 

\item\label{rem:basis-c}
Alternatively, we can construct rather smaller vectors $\{\mathfrak{v}_n\}_{n\in\mathcal{N}}$ also satisfy (\ref{Lm2-a}) and (\ref{Lm2-b}). Require the vanishing $\{\omega_u \mathfrak{v}_{n,n'}/\alpha_u\}=0$ only for $u\in\mathcal{E}_{n'}$ $n'\neq n$, and assume $N_{0}(\mathfrak{v}_n,n)\geq 0$. Moreover, we require also that $N_{a}(\ell+\mathfrak{v}_n,n)\geq \kappa$ for any $\ell\in (\Box-\mathbf{c}_{a})\cap \mathbb{Z}^{\mathcal{N}}({a})$, where $\Box = \sum_{n\in \mathcal{N}}[0,1)\mathfrak{v}_{n}$ is the semiopen cube generated by $\{\mathfrak{v}_n\}_{n\in\mathcal{N}}$. Then one can check that these conditions imply (\ref{Lm2-a}) and (\ref{Lm2-b}).
\end{enumerate}
\end{remark}

We define the sets 
$$
\mathcal{M}_{a,n}^{-}:=\{ \ell\in (\Box - \mathbf{c}_{a}) \cap \mathbb{Z}^{\mathcal{N}}(a)  \,|\,  N_{a}(\ell,n) < 0  \}, \ \ 
\mathcal{M}^{-}_{a,\,\mathcal{I}}:= \bigcap_{n\in \mathcal{I}} \mathcal{M}^{-}_{a,\,n} \ \ \mbox{for every} \ \ \mathcal{I}\subseteq \mathcal{N}, $$ 
and let $F_{\mathcal{I}} = \mathbb{Z}_{\geq0} \langle \mathfrak{v}_{n'} \rangle_{ n'\in \mathcal{N} \setminus \mathcal{I}}$ be the `face' associated with $\mathcal{I}$.  In particular, $\mathcal{M}^{-}_{a,\, \emptyset}= (\Box - \mathbf{c}_{a}) \cap \mathbb{Z}^{\mathcal{N}}(a)$.
One can see immediately that 
$
\left\{\ell\in \mathbb{Z}^{\mathcal{N}}(a)  \,|\, N_{a}(\ell,n) < 0 \right\} 
= 
\bigsqcup_{\ell\in \mathcal{M}^{-}_{a, n}} (\ell+ F_{n}).$ Moreover, we conclude the following generalization of \cite[Proposition 7.3.5]{LN}.

\begin{prop}\label{Thm1}
Let $\{\mathfrak{v}_{n}\}_{n\in \mathcal{N}}$ as in Lemma \ref{Lm2}. Then
\begin{enumerate}[(a)]
\item\label{Thm1-a}
The normalization $\overline{\mathcal{M}}_{a}$ is given by
$$
\overline{\mathcal{M}}_{a} = \bigsqcup_{\ell\in \mathcal{M}^{-}_{a,\, \emptyset}} 
\ell + \mathbb{Z}_{\geq 0}\langle \mathfrak{v}_{n} \rangle_{ n\in \mathcal{N}}.
$$ 

\item\label{Thm1-b}
The set of holes $\overline{\mathcal{M}}_{a} \setminus \mathcal{M}_{a}$ is described by 
$$
\overline{\mathcal{M}}_{a}  \setminus \mathcal{M}_{a}
=
\bigcup_{n\in \mathcal{N}} \Big( 
\bigsqcup_{\ell\in \mathcal{M}^{-}_{a,\,n}}
\ell+F_{n}
\Big),
$$
where $\displaystyle \bigcap_{n\in \mathcal{I}} \Big( \bigsqcup_{\ell\in \mathcal{M}^{-}_{a,\,n}} \ell+F_{n} \Big) = \bigsqcup_{\ell\in \mathcal{M}^{-}_{a,\,\mathcal{I}}} \ell+F_{\mathcal{I}} $.
\end{enumerate}
\end{prop}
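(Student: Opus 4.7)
The plan is to first establish part (\ref{Thm1-a}) as a clean fundamental-domain decomposition of $\overline{\mathcal{M}}_{a}$, then exploit Lemma \ref{Lm2} to read off exactly which elements land in $\mathcal{M}_{a}$, which gives part (\ref{Thm1-b}).

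For (\ref{Thm1-a}), I would proceed as follows. By construction $\mathfrak{v}_{n}=\lambda_{n}\pi_{\mathcal{N}}(E_{n}^{*})$, so the $\mathfrak{v}_{n}$ form an $\mathbb{R}$-basis of $\mathbb{R}\langle E_{n}\rangle_{n\in\mathcal{N}}$ with $\mathbb{R}_{\geq 0}\langle \mathfrak{v}_{n}\rangle = \mathcal{C}^{orb}$. Thus every $y\in\mathcal{C}^{orb}$ has a unique representation $y=\sum_{n\in\mathcal{N}} c_{n}\mathfrak{v}_{n}$ with $c_{n}\geq 0$, and writing $c_{n}=\lfloor c_{n}\rfloor+\{c_{n}\}$ gives a unique decomposition $y=y_{0}+\sum_{n}k_{n}\mathfrak{v}_{n}$ with $y_{0}\in\Box$ and $k_{n}\in\mathbb{Z}_{\geq 0}$. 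Applying this to $y=\ell'+\mathbf{c}_{a}$ for $\ell'\in\overline{\mathcal{M}}_{a}$, the remainder $\ell:=y_{0}-\mathbf{c}_{a}$ lies in $\Box-\mathbf{c}_{a}$, and since $\mathfrak{v}_{n}\in \mathbb{Z}^{\mathcal{N}}(0)$ one has $\ell=\ell'-\sum_{n}k_{n}\mathfrak{v}_{n}\in\mathbb{Z}^{\mathcal{N}}(a)$. Hence $\ell\in \mathcal{M}^{-}_{a,\emptyset}$, proving that the union in (\ref{Thm1-a}) exhausts $\overline{\mathcal{M}}_{a}$; uniqueness of the decomposition (and hence disjointness of the union) follows from uniqueness of $y_{0}$ and $(k_{n})$.

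For (\ref{Thm1-b}), given $\ell'=\ell+\sum_{m}k_{m}\mathfrak{v}_{m}\in\overline{\mathcal{M}}_{a}$ as in (\ref{Thm1-a}), I would compute $N_{a}(\ell',n)$ using Lemma \ref{Lm2} (applied with $\kappa=0$, and iteratively since $\overline{\mathcal{M}}_{a}$ is closed under addition of $\mathfrak{v}_{m}$). Part (\ref{Lm2-a}) of that lemma yields $N_{a}(\ell',n)=N_{a}(\ell+k_{n}\mathfrak{v}_{n},n)$, while part (\ref{Lm2-b}) gives $N_{a}(\ell+k_{n}\mathfrak{v}_{n},n)\geq 0$ as soon as $k_{n}\geq 1$. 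Therefore
\[
\ell'\notin\mathcal{M}_{a}\ \Longleftrightarrow\ \exists\, n\in\mathcal{N}:\ k_{n}=0\ \textrm{and}\ N_{a}(\ell,n)<0
\ \Longleftrightarrow\ \exists\, n:\ \ell\in\mathcal{M}^{-}_{a,n}\ \textrm{and}\ \sum_{m}k_{m}\mathfrak{v}_{m}\in F_{n}.
\]
This gives $\overline{\mathcal{M}}_{a}\setminus\mathcal{M}_{a}=\bigcup_{n}\bigsqcup_{\ell\in \mathcal{M}^{-}_{a,n}}(\ell+F_{n})$, where the inner disjointness inherits from the decomposition in (\ref{Thm1-a}).

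Finally, for the intersection formula, I would use uniqueness of the decomposition in (\ref{Thm1-a}) one more time: an element $\ell+\sum_{m}k_{m}\mathfrak{v}_{m}$ belongs to $\bigsqcup_{\ell'\in\mathcal{M}^{-}_{a,n}}(\ell'+F_{n})$ iff $\ell\in\mathcal{M}^{-}_{a,n}$ and $k_{n}=0$. Intersecting over $n\in\mathcal{I}$ therefore corresponds to requiring $\ell\in\bigcap_{n\in\mathcal{I}}\mathcal{M}^{-}_{a,n}=\mathcal{M}^{-}_{a,\mathcal{I}}$ and $k_{n}=0$ for all $n\in\mathcal{I}$, i.e.\ $\sum_{m}k_{m}\mathfrak{v}_{m}\in F_{\mathcal{I}}$, which is exactly the claimed identity. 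The only genuinely technical point is the fundamental-domain argument in (\ref{Thm1-a}) — specifically checking that the floor-part decomposition lands in $\mathbb{Z}^{\mathcal{N}}(a)$ — but this is immediate from $\mathfrak{v}_{n}\in\mathbb{Z}^{\mathcal{N}}(0)$; everything after that is a direct bookkeeping consequence of Lemma \ref{Lm2}.
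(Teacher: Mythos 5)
Your argument is correct and follows essentially the same route as the paper: part (\ref{Thm1-a}) is exactly the simplicial fundamental-domain decomposition that the paper delegates to \cite[Proposition 2.43]{BG} (your direct proof via the half-open parallelepiped $\Box$ is just an unpacking of that citation, and your check that the remainder stays in $\mathbb{Z}^{\mathcal{N}}(a)$ because $\mathfrak{v}_{n}\in\mathbb{Z}^{\mathcal{N}}(0)$ is the right point to verify), while part (\ref{Thm1-b}) is the intended consequence of Lemma \ref{Lm2}(\ref{Lm2-a})--(\ref{Lm2-b}) with $\kappa=0$, which the paper states without detail. No gaps.
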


\begin{proof}
(a) is implied by \cite[Proposition 2.43]{BG}. Part (b) follows from the choice of $\{\mathfrak{v}_n\}_{n\in\mathcal{N}}$ and their properties from Lemma \ref{Lm2}. 
\end{proof}

\begin{cor}\label{genser}
The structure of any set $\mathcal{D}$ can be encoded by defining the generating series (fine Hilbert series) $\mathcal{H}_{\mathcal{D}}(\mathbf{t}):=\sum_{\ell\in\mathcal{D}}\mathbf{t}^{\ell}$. In the case of $\mathcal{M}_{a}$, Proposition \ref{Thm1} implies the following form:
$$\mathcal{H}_{\mathcal{M}_{a}}(\mathbf{t}_{\mathcal{N}})=
\sum_{\emptyset \subseteq \mathcal{I}\subseteq\mathcal{N}}(-1)^{|\mathcal{I}|}
\frac{\sum_{\ell \in \mathcal{M}_{a,\mathcal{I}}^{-}} \mathbf{t}^{\ell}} {\prod_{n\notin \mathcal{I}} (1-\mathbf{t}^{\mathfrak{v}_{n}})}.$$
\end{cor}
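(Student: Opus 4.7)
The plan is to split $\mathcal{M}_{a}$ as $\overline{\mathcal{M}}_{a}$ minus the set of holes, compute the generating series of each piece using Proposition \ref{Thm1}, and then combine them by inclusion--exclusion.

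First I would use Proposition \ref{Thm1}(\ref{Thm1-a}), which presents $\overline{\mathcal{M}}_{a}$ as a disjoint union of translates of the semigroup $\mathbb{Z}_{\geq 0}\langle \mathfrak{v}_{n}\rangle_{n\in \mathcal{N}}$ indexed by $\mathcal{M}^{-}_{a,\emptyset}$. Since this decomposition is disjoint and each translate has generating series $\mathbf{t}^{\ell}/\prod_{n\in\mathcal{N}}(1-\mathbf{t}^{\mathfrak{v}_{n}})$, summing gives
\[
\mathcal{H}_{\overline{\mathcal{M}}_{a}}(\mathbf{t}_{\mathcal{N}}) = \frac{\sum_{\ell \in \mathcal{M}^{-}_{a,\emptyset}} \mathbf{t}^{\ell}}{\prod_{n\in\mathcal{N}}(1-\mathbf{t}^{\mathfrak{v}_{n}})},
\]
which is exactly the $\mathcal{I}=\emptyset$ summand in the claimed formula.

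Next I would compute $\mathcal{H}_{\overline{\mathcal{M}}_{a}\setminus \mathcal{M}_{a}}(\mathbf{t}_{\mathcal{N}})$. By Proposition \ref{Thm1}(\ref{Thm1-b}) the set of holes is a union $\bigcup_{n\in \mathcal{N}} H_{n}$ where $H_{n} := \bigsqcup_{\ell\in \mathcal{M}^{-}_{a,n}} \ell+F_{n}$, and the same proposition gives the key intersection identity
\[
\bigcap_{n\in \mathcal{I}} H_{n} \;=\; \bigsqcup_{\ell \in \mathcal{M}^{-}_{a,\mathcal{I}}} \ell+F_{\mathcal{I}}.
\]
Since each right-hand side is a disjoint union of translates of the monoid $F_{\mathcal{I}}=\mathbb{Z}_{\geq 0}\langle \mathfrak{v}_{n'}\rangle_{n'\notin \mathcal{I}}$, its Hilbert series is $(\sum_{\ell\in \mathcal{M}^{-}_{a,\mathcal{I}}}\mathbf{t}^{\ell})/\prod_{n\notin \mathcal{I}}(1-\mathbf{t}^{\mathfrak{v}_{n}})$. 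Applying inclusion--exclusion to $\bigcup_{n\in \mathcal{N}}H_{n}$ then yields
\[
\mathcal{H}_{\overline{\mathcal{M}}_{a}\setminus \mathcal{M}_{a}}(\mathbf{t}_{\mathcal{N}}) = \sum_{\emptyset\neq \mathcal{I}\subseteq \mathcal{N}} (-1)^{|\mathcal{I}|+1} \frac{\sum_{\ell\in \mathcal{M}^{-}_{a,\mathcal{I}}}\mathbf{t}^{\ell}}{\prod_{n\notin \mathcal{I}}(1-\mathbf{t}^{\mathfrak{v}_{n}})}.
\]

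Finally, subtracting this from $\mathcal{H}_{\overline{\mathcal{M}}_{a}}$ absorbs the sign and extends the range of $\mathcal{I}$ to include $\emptyset$, producing the announced formula. I do not expect a serious obstacle here: all combinatorial input (disjointness in the cone decomposition, the explicit form of intersections, and the fact that sums over disjoint translates of a semigroup factor as translate-sum times the semigroup's Hilbert series) is already packaged in Proposition \ref{Thm1}. The only point requiring a brief justification is that the formal series $\mathbf{t}^{\ell}/\prod_{n\notin \mathcal{I}}(1-\mathbf{t}^{\mathfrak{v}_{n}})$ converge as elements in the appropriate completion (equivalently, that the exponents lie in a strictly convex subcone of $\mathcal{C}^{orb}$ so no cancellation of infinite sums is invoked), which follows from $\mathfrak{v}_{n}\in \mathcal{M}_{0}\subset \mathcal{C}^{orb}$ and negative definiteness of $I^{orb}$.
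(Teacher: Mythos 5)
Your proposal is correct and follows exactly the route the paper intends: the corollary is stated as an immediate consequence of Proposition \ref{Thm1}, and the implicit argument is precisely your inclusion--exclusion on $\mathcal{M}_{a}=\overline{\mathcal{M}}_{a}\setminus(\overline{\mathcal{M}}_{a}\setminus\mathcal{M}_{a})$, using part (\ref{Thm1-a}) for the normalization and the intersection identity in part (\ref{Thm1-b}) for the holes. The sign bookkeeping and the factorization of each disjoint union of translates of $F_{\mathcal{I}}$ into $\bigl(\sum_{\ell}\mathbf{t}^{\ell}\bigr)/\prod_{n\notin\mathcal{I}}(1-\mathbf{t}^{\mathfrak{v}_{n}})$ are exactly as you describe.
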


\subsection{\bf Multi-index filtration and generating series}\labelpar{sss:Hilb}
We consider a filtration  $\{\mathcal{M}_{a}(k)  \,|\, k\in \mathbb{Z}\langle E^{*}_{j}\rangle_{j\in \mathcal{J}} \}$ on the 
$\mathcal{M}_{0}$-module $\mathcal{M}_{a}$ associated with a fixed index set $\mathcal{J}\subset \mathcal{N}$ by defining the submodules 
$$
\mathcal{M}_{a}(k):=\{\ell \in \mathcal{M}_{a} \,|\,  N_{a}(\ell,n)\geq k_n,\ \forall n\in \mathcal{J}\},
$$
where $k = \sum_{j\in \mathcal{J}}k_{j}E^{*}_{j}$.

\begin{remark}\labelpar{rk:Ma(k)}
We observe that $\mathbb{Z}^{\mathcal{N}}(a)=\mathbb{Z}^{\mathcal{N}}(a-k)$ and $N_{a}(\ell,n)\geq k_{n}$ is equivalent with $N_{a-k}(\ell,n)\geq 0$ for any $n\in\mathcal{J}$,  therefore $\mathcal{M}_{a}(k)=\mathcal{M}_{a-k}$.   
\end{remark}

Clearly, $\mathcal{M}_{a}(k)\supset \mathcal{M}_{a}(k+ E^{*}_{j})$, moreover for any $\mathcal{I} \subset \mathcal{J}$ we have  
\begin{equation}\label{eq:fil1}
 \bigcap_{i\in \mathcal{I}} \mathcal{M}_{a}(k+E^{*}_{i})=\mathcal{M}_{a} ( k+  E_{\mathcal{I}}^{*} ), 
\end{equation}
where we use notation $E_{\mathcal{I}}^*:=\sum_{i\in \mathcal{I}}E^{*}_{i}$.
One can also consider the associated graded object
\begin{equation}\label{eq:fil2}
\mathrm{gr}_{k} \mathcal{M}_{a}:=\mathcal{M}_{a}(k)\setminus \bigcup_{j\in\mathcal{J}}\mathcal{M}_{a}(k+E^{*}_{j})
\end{equation}
at level $k$. 
Notice that $\mathrm{gr}_{k}\mathcal{M}_{a} = \{ \ell \in \mathcal{M}_{a} \,|\, N_{a}(\ell,j)=k_j, \, \forall j \in \mathcal{J}\}$.

%{\color{blue}
%The above filtration induces `filtration' on the generating sets of holes in the following way. 
%We fix $k$ and choose generators $\mathfrak{v}_n\in \mathbb{Z}\langle E_{n'}\rangle_{n' \in \mathcal{N}}$ for any $n\in\mathcal{N}$ satisfying 
%\begin{enumerate}[(a)]
% \item properties of Lemma \ref{Lm2} and 
% \item $N_a(\ell+\mathfrak{v}_j,j)\geq k_j+1$ for any $j\in\mathcal{J}$.
%\end{enumerate} 
%}%
%Define   
%$$
%\mathcal{M}_{a,\mathcal{I}}^{-}(k):=\{ \ell\in (\Box - \mathbf{c}_{a-k}) \cap \mathbb{Z}^{\mathcal{N}}(a)  \,|\,  N_{a}(\ell,n) < 0 \, \, \forall n\in \mathcal{I}\setminus \mathcal{J} \, \, \, \mathrm{and} \, \, \, N_{a}(\ell,n) < k_n \, \, \forall n\in \mathcal{I}\cap \mathcal{J}\}
%$$
%for any $\mathcal{I}\subset\mathcal{N}$. Hence, $\mathcal{M}_{a,\mathcal{I}}^{-}(k)\subset \mathcal{M}_{a,\mathcal{I}}^{-}(k+E^{*}_{j})$ for any $j\in \mathcal{J}$. In particular, $\mathcal{M}_{a,\mathcal{I}}^{-}(0)=\mathcal{M}_{a,\mathcal{I}}^{-}$. The associated graded set is defined by  
%$\mathrm{gr}_{k}\mathcal{M}_{a,\mathcal{I}}^{-}:=\mathcal{M}_{a,\mathcal{I}}^{-}(k+E_{\mathcal{J}}^{*})\setminus \cup_{n\in\mathcal{J}}\mathcal{M}_{a,\mathcal{I}}^{-}(k+E_{\mathcal{J}\setminus n}^{*}).$

We also define the generating set of holes of the graded pieces as follows. We fix $k$ and we choose vectors $\mathfrak{v}_{n}\in \mathbb{Z}\langle E_{n'}\rangle_{n'\in \mathcal{N}}$ for any $n\in \mathcal{N}$ satisfying the properties of Lemma \ref{Lm2} for the lift $a-k$ and parameter $\kappa=1$, in particular $N_{a}(\ell+ \mathfrak{v}_{n},n)\geq k_{n}+1$. Then, for every subset $\mathcal{I} \supseteq \mathcal{J}$ we set
$$
\mathrm{gr}_{k}\mathcal{M}_{a,\mathcal{I}}^{-} := \{ \ell\in (\Box - \mathbf{c}_{a-k}) \cap \mathbb{Z}^{\mathcal{N}}(a)  \,|\,  N_{a}(\ell,n) < 0 \, \, \forall n\in \mathcal{I}\setminus \mathcal{J} \, \, \, \mathrm{and} \, \, \, N_{a}(\ell,n) =k_n \, \, \forall n\in \mathcal{J}\}.
$$

The next lemma gives a rational form of the series $\mathcal{H}_{\mathcal{M}_{a}(k)}$ and $\mathcal{H}_{\mathrm{gr}_{k} \mathcal{M}_{a}}$ in terms of holes.  

\begin{lemma}\labelpar{Thm2}
\hskip1mm
\begin{enumerate}[(a)]
 \item\label{Thm2-a} 
 $\displaystyle 
 \mathcal{H}_{\mathcal{M}_{a}(k)}(\mathbf{t}_{\mathcal{N}})
 =
\sum_{\emptyset \subseteq\mathcal{I}\subseteq\mathcal{N}}(-1)^{|\mathcal{I}|}
\frac{\sum_{\ell \in 
\mathcal{M}_{a-k,\mathcal{I}}^{-}} 
\mathbf{t}^{\ell}}{ \prod_{n\notin \mathcal{I}} (1-\mathbf{t}^{\mathfrak{v}_{n}}) }$,

\item\label{Thm2-b}
$ \displaystyle
\mathcal{H}_{\mathrm{gr}_{k} \mathcal{M}_{a}}(\mathbf{t}_{\mathcal{N}})
=
\sum_{\mathcal{J} \subseteq \mathcal{I}\subseteq\mathcal{N}}(-1)^{|\mathcal{I}\setminus\mathcal{J}|}
\frac{\sum_{\ell \in \mathrm{gr}_{k}\mathcal{M}_{a,\mathcal{I}}^{-}} \mathbf{t}^{\ell}}{ \prod_{n\notin \mathcal{I}} (1-\mathbf{t}^{\mathfrak{v}_{n}}) }.$

\end{enumerate}
\end{lemma}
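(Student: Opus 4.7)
My approach is to deduce part (a) as a direct specialization of Corollary \ref{genser} via a change of lift, and then to obtain part (b) by adapting the fundamental-domain decomposition of Proposition \ref{Thm1}(\ref{Thm1-b}) to the graded setting, with a parallel inclusion-exclusion route as a cross-check.

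For part (a), Remark \ref{rk:Ma(k)} identifies $\mathcal{M}_{a}(k)$ with $\mathcal{M}_{a-k}$ on the common affine lattice $\mathbb{Z}^{\mathcal{N}}(a)=\mathbb{Z}^{\mathcal{N}}(a-k)$. By Remark \ref{rem:basis}(\ref{rem:basis-a}) the auxiliary vectors $\{\mathfrak{v}_{n}\}_{n\in\mathcal{N}}$ of Lemma \ref{Lm2} can be chosen independently of the lift. Applying Corollary \ref{genser} to the $\mathcal{M}_{0}$-module $\mathcal{M}_{a-k}$, whose set of holes is encoded by the sets $\mathcal{M}^{-}_{a-k,\mathcal{I}}$, yields the stated rational form verbatim.

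For part (b), the cleanest route is to mimic the proof of Proposition \ref{Thm1}(\ref{Thm1-b}) in a relative setting. By Lemma \ref{Lm2}(\ref{Lm2-a}), translation by $\mathfrak{v}_{n}$ leaves $N_{a}(\cdot,n')$ fixed for $n'\neq n$, so adding $\mathfrak{v}_{n}$ for $n\notin\mathcal{J}$ preserves the level condition $N_{a}(\ell,j)=k_{j}$ defining $\mathrm{gr}_{k}\mathcal{M}_{a}$, while adding $\mathfrak{v}_{n}$ for $n\in\mathcal{J}$ destroys it. Choosing $\{\mathfrak{v}_{n}\}$ as in Remark \ref{rem:basis}(\ref{rem:basis-c}) with $\kappa=1$ and lift $a-k$, the graded piece becomes a module over the face monoid $\mathbb{Z}_{\geq 0}\langle\mathfrak{v}_{n}\rangle_{n\notin\mathcal{J}}$; its fundamental-box slice in the restricted cone is exactly $\mathrm{gr}_{k}\mathcal{M}^{-}_{a,\mathcal{J}}$, and the hole loci along the sub-faces $F_{\mathcal{I}}$ indexed by $\mathcal{I}\supseteq\mathcal{J}$ correspond to $\mathrm{gr}_{k}\mathcal{M}^{-}_{a,\mathcal{I}}$. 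The inclusion-exclusion argument of Proposition \ref{Thm1}(\ref{Thm1-b}) then transports and produces (b). As an alternative derivation, one may start from (\ref{eq:fil1})--(\ref{eq:fil2}) to obtain by set-theoretic inclusion-exclusion
$$\mathcal{H}_{\mathrm{gr}_{k}\mathcal{M}_{a}}(\mathbf{t}_{\mathcal{N}}) = \sum_{\mathcal{I}_{0}\subseteq\mathcal{J}}(-1)^{|\mathcal{I}_{0}|}\,\mathcal{H}_{\mathcal{M}_{a}(k+E^{*}_{\mathcal{I}_{0}})}(\mathbf{t}_{\mathcal{N}}),$$
and expand each summand via (a).

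The main obstacle is the combinatorial bookkeeping. In the inclusion-exclusion route, one must reconcile the shifted fundamental box $\Box-\mathbf{c}_{a-k-E^{*}_{\mathcal{I}_{0}}}$ arising in (a) with the common box $\Box-\mathbf{c}_{a-k}$ appearing in the definition of $\mathrm{gr}_{k}\mathcal{M}^{-}_{a,\mathcal{I}}$, and verify that the double sum over $(\mathcal{I}_{0},\mathcal{I}_{1})$ collapses so that only the terms with $\mathcal{I}\supseteq\mathcal{J}$ survive with the correct sign $(-1)^{|\mathcal{I}\setminus\mathcal{J}|}$. The geometric route bypasses this but demands careful verification that the sub-cone $\mathbb{Z}_{\geq 0}\langle\mathfrak{v}_{n}\rangle_{n\notin\mathcal{J}}$ still admits a disjoint decomposition of the form in Proposition \ref{Thm1}(\ref{Thm1-a}), for which the refined conditions of Remark \ref{rem:basis}(\ref{rem:basis-c}) are essential.
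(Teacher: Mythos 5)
Your proposal is correct and follows essentially the same route as the paper: part (a) is exactly Remark \ref{rk:Ma(k)} combined with Corollary \ref{genser} applied to $\mathcal{M}_{a-k}$, and your ``geometric route'' for part (b) is precisely the paper's argument, which intersects the Proposition \ref{Thm1}(\ref{Thm1-b}) decomposition of $\mathcal{M}_{a-k}$ with the level-set locus $\{N_{a-k}(\cdot,j)\leq 0,\ j\in \mathcal{J}\}$ and uses the $\kappa=1$ property of Lemma \ref{Lm2}(\ref{Lm2-b}) to force the $\mathfrak{v}_{j}$-coefficients to vanish for $j\in\mathcal{J}$, leaving a decomposition over the face monoid $\mathbb{Z}_{\geq 0}\langle \mathfrak{v}_{n}\rangle_{n\notin\mathcal{J}}$. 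The box-shift issue you flag affects only your alternative cross-check, not the main argument, so it does not create a gap.
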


\begin{proof}
Remark \ref{rk:Ma(k)} and Corollary \ref{genser} applied for $\mathcal{M}_{a-k}$ deduce the formula of (\ref{Thm2-a}).
For part (\ref{Thm2-b}) we give an inclusion-exclusion formula for $\mathrm{gr}_{k}\mathcal{M}_{a}$, which will imply the formula for $\mathcal{H}_{\mathrm{gr}_{k}\mathcal{M}_{a}}(\mathbf{t}_{\mathcal{N}})$. Thus, denote $\mathcal{L}_{a-k}:=\{\ell \in \mathbb{Z}^{\mathcal{N}}(a)  \,|\,  N_{a-k}(\ell, j)\leq 0\ \forall j\in \mathcal{J}\}$ and note that $\mathrm{gr}_{k}\mathcal{M}_{a} = \mathcal{M}_{a-k} \cap \mathcal{L}_{a-k}$. The inclusion-exclusion formula for $\mathrm{gr}_{k}\mathcal{M}_{a}$ will be deduced from the following, given by the Proposition \ref{Thm1}(\ref{Thm1-b}),
$$
\mathcal{M}_{a-k} = 
\sum_{\emptyset \subseteq \mathcal{I}' \subseteq \mathcal{N}} (-1)^{|\mathcal{I}'|} \left( \mathcal{M}_{a-k, \mathcal{I}'}^{-} + \mathbb{Z}_{\geq0} \langle \mathfrak{v}_{i} \rangle_{i\notin \mathcal{I}'}\right)
$$
by intersecting it with $\mathcal{L}_{a-k}$. Therefore, we deduce the identity
\begin{equation}\label{eq:intersection}
\left( 
\mathcal{M}_{a-k, \mathcal{I}'}^{-} + \mathbb{Z}_{\geq0} \langle \mathfrak{v}_{i} \rangle_{i\notin \mathcal{I}'}
\right) \cap \mathcal{L}_{a-k} 
= 
\left( 
\mathcal{M}_{a-k, \mathcal{I}'}^{-} \cap \mathcal{L}_{a-k} \right) + \mathbb{Z}_{\geq0} \langle \mathfrak{v}_{i} \rangle_{i\notin \mathcal{I}' \cup \mathcal{J}}.
\end{equation}
Indeed, for $\ell = \ell_{0} + \sum_{i\notin \mathcal{I}'} \lambda_{i} \mathfrak{v}_{i}$ in the left hand side of (\ref{eq:intersection}) we have $0\geq N_{a-k}(\ell, j) = N_{a-k}(\ell_{0} + \lambda_{j}\mathfrak{v}_{j},j)$ for any $j\in \mathcal{J}\setminus \mathcal{I}'$, which implies $\lambda_{j}=0$ by the choice of vectors $\mathfrak{v}_{n}$, $n\in \mathcal{N}$ (cf. Lemma \ref{Lm2}(\ref{Lm2-b})). Furthermore, we have inclusion-exclusion formula for the graded holes
$$
\mathrm{gr}_{k}\mathcal{M}^{-}_{a,\mathcal{I}} 
=
\sum_{\substack{\mathcal{I}'\subseteq \mathcal{I}\\  \mathcal{I}' \cup \mathcal{J}=\mathcal{I} }} (-1)^{|\mathcal{I}'\cap \mathcal{J}|} \left( \mathcal{M}^{-}_{a-k,\mathcal{I}'} \cap \mathcal{L}_{a-k} \right). 
$$
Finally, combining the above results we get 
$$
\mathrm{gr}_{k}\mathcal{M}_{a} = \mathcal{M}_{a-k} \cap \mathcal{L}_{a-k} = \sum_{\mathcal{J} \subseteq \mathcal{I} \subseteq \mathcal{N}} (-1)^{|\mathcal{I}\setminus \mathcal{J}|} \left( \mathrm{gr}_{k}\mathcal{M}_{a,\mathcal{I}}^{-} + \mathbb{Z}_{\geq0}\langle \mathfrak{v}_{i} \rangle_{i\notin \mathcal{I}}
\right),
$$
which implies the formula for $\mathcal{H}_{\mathrm{gr}_{k}\mathcal{M}_{a}}(\mathbf{t}_{\mathcal{N}})$.
\end{proof}

\subsection[Example]{Example (part I)}\labelpar{example}
Consider the following plumbing graph $\Gamma$:
\vspace{0.2cm}
\begin{center}
\begin{tikzpicture}[scale=.5]
\coordinate (v11) at (0,0);
\draw  node[above] at (v11) {$-2$};
\draw[fill] (v11) circle (0.1);

\coordinate (v1) at (2,0);
\draw node[above] at (v1) {$-1$};
\draw[fill] (v1) circle (0.1);

\coordinate (v12) at (2,-2);
\draw   node[below] at (v12) {$-3$};
\draw[fill] (v12) circle (0.1);

\coordinate (u1) at (4,0);
\draw   node[above] at (u1) {$-9$};
\draw[fill] (u1) circle (0.1);

\coordinate (v0) at (6,0);
\draw   node[above] at (v0) {$-1$};
\draw[fill] (v0) circle (0.1);

\coordinate (v01) at (6,-2);
\draw   node[below] at (v01) {$-2$};
\draw[fill] (v01) circle (0.1);

\coordinate (u2) at (8,0);
\draw   node[above] at (u2) {$-13$};
\draw[fill] (u2) circle (0.1);

\coordinate (v2) at (10,0);
\draw   node[above] at (v2) {$-1$};
\draw[fill] (v2) circle (0.1);

\coordinate (v22) at (10,-2);
\draw   node[below] at (v22) {$-3$};
\draw[fill] (v22) circle (0.1);

\coordinate (v21) at (12,0);
\draw   node[above] at (v21) {$-2$};
\draw[fill] (v21) circle (0.1);

\draw[-] (v11) -- (v1);
\draw[-] (v12) -- (v1);
\draw[-] (u1) -- (v1);
\draw[-] (u1) -- (v0);
\draw[-] (v01) -- (v0);
\draw[-] (u2) -- (v0);
\draw[-] (u2) -- (v2);
\draw[-] (v21) -- (v2);
\draw[-] (v22) -- (v2);

\draw node[below left] at (v1) {$n_1$};
\draw node[below left] at (v0) {$n_2$};
\draw node[below left] at (v2) {$n_3$};

\draw node[below] at (v11) {$v_{11}$};
\draw node[left] at (v12) {$v_{12}$};
\draw node[left] at (v01) {$v_{21}$};
\draw node[left] at (v22) {$v_{32}$};
\draw node[below] at (v21) {$v_{31}$};
\end{tikzpicture}
\end{center}
Notice that $H$ is trivial, hence we describe the non-normal affine monoid $\mathcal{M}_{0}$ associated with $a=0$. Let $n_1,n_2,n_3$ be the nodes of $\Gamma$ and denote by $v_{11},v_{12},v_{21},v_{31},v_{32}$ the corresponding ends as shown  on the above figure. The generalized Seifert invariants are the followings: $(\alpha_{v_{11}},\omega_{v_{11}})=(2,1)$, $(\alpha_{v_{12}},\omega_{v_{12}})=(3,1)$, $(\alpha_{n_1,n_2},\omega_{n_1,n_2})=(9,1)$, $(\alpha_{v_{21}},\omega_{v_{21}})=(2,1)$, $(\alpha_{n_2,n_3},\omega_{n_2,n_3})=(13,1)$, $(\alpha_{v_{31}},\omega_{v_{31}})=(2,1)$ and $(\alpha_{v_{32}},\omega_{v_{32}})=(3,1)$. Then,   
\begin{equation*} 
\mathcal{M}_{0}
=
\left\{
\ell= \sum_{i=1}^{3}\ell_i E_{n_{i}}\in \mathbb{Z}\langle E_{i} \rangle_{i=1}^{3} \ : \ 
\begin{array}{l} 
 N_0(\ell,n_1)=\frac{8}{9}\ell_1 - \frac{1}{9}\ell_2 + \big\lfloor\frac{-\ell_1}{2} \big\rfloor + \big\lfloor \frac{-\ell_1}{3} \big\rfloor \geq 0  \\
 N_0(\ell,n_2)=\frac{95}{117}\ell_2 - \frac{1}{9}\ell_1 -\frac{1}{13}\ell_3 +\big\lfloor \frac{-\ell_2}{2} \big\rfloor \geq 0\\
 N_0(\ell,n_3)=\frac{12}{13}\ell_3 - \frac{1}{13}\ell_2 + \big\lfloor\frac{-\ell_3}{2} \big\rfloor + \big\lfloor\frac{-\ell_3}{3} \big\rfloor\geq 0\\    
 \ell_1+\ell_2 \equiv 0 \ (\mbox{mod} \ 9 )\\
 \ell_2+\ell_3 \equiv 0 \ (\mbox{mod} \ 13 )
\end{array}
\right\}. 
\end{equation*}  
Through the example we will use short notation $(\ell_{1}, \ell_{2}, \ell_{3})$ for $\ell = \ell_{1}E_{n_{1}}+\ell_{2}E_{n_{2}}+\ell_{3}E_{n_{3}}$. 
 One can take the generators $\mathfrak{v}_1=1/3\cdot\pi_{\mathcal{N}}(E_1^*)=(62,28,24)$, $\mathfrak{v}_2=\pi_{\mathcal{N}}(E_2^*)=(84,42,36)$ and $\mathfrak{v}_3=1/3\cdot\pi_{\mathcal{N}}(E_3^*)=(24,12,14)$ satisfying properties of Lemma \ref{Lm2}. They provide the following sets 
\begin{equation*}
\begin{array}{c}
\Box=\{(0,0,0),(12,6,7),(31,14,12),(42,21,18),(43,20,19),(54,27,25),(73,35,30),(85,41,37)\},\\
\mathcal{M}^-_{0,n_1}=\left\{(31,14,12), (43,20,19),(73,35,30),(85,41,37)\right\},\\
\mathcal{M}^-_{0,n_2}=\emptyset \ \ \mbox{and} \ \ 
\mathcal{M}^-_{0,n_3}=\left\{(12,6,7), (43,20,19),(54,27,25),(85,41,37)\right\}.
\end{array}
\end{equation*}
Hence, the holes of $\mathcal{M}_{0}$ are elements in the following forms:
\begin{equation*}
\begin{array}{ll}
(31,14,12)+\lambda_1\cdot(84,42,36)+\lambda_2\cdot(24,12,14), & (43,20,19)+\lambda_3\cdot(84,42,36)+\lambda_4\cdot(24,12,14),\\
(73,35,30)+\lambda_5\cdot(84,42,36)+\lambda_6\cdot(24,12,14), & (85,41,37)+\lambda_7\cdot(84,42,36)+\lambda_8\cdot(24,12,14),\\ 
(12,6,7)+\lambda_9\cdot(62,28,24)+\lambda_{10}\cdot(84,42,36), & (43,20,19)+\lambda_{11}\cdot(62,28,24)+\lambda_{12}\cdot(84,42,36),\\
(54,27,25)+\lambda_{13}\cdot(62,28,24)+\lambda_{14}\cdot(84,42,36), & (85,41,37)+\lambda_{15}\cdot(62,28,24)+\lambda_{16}\cdot(84,42,36).
\end{array}
\end{equation*}
for any $\lambda_i\in \mathbb{Z}_{\geq 0}$. The computations were performed  using Maple.

\begin{remark}
By Remark \ref{rem:basis}(\ref{rem:basis-c}) we could choose `smaller' generators, eg. one can take $\mathfrak{v}'_2:= 1/2\cdot \pi_{\mathcal{N}}(E^{*}_{2}) =  (42,21,18)$ instead of $\mathfrak{v}_{2}$. However, the generators $\mathfrak{v}_{1}, \mathfrak{v}'_{2}, \mathfrak{v}_{3}$ cannot be used for  
 Lemma \ref{Thm2}, since $N_{0}(\mathfrak{v}'_2,n_2)=0$. Notice that for our choice we have $N_{0}(\mathfrak{v}_2,n_2)=1$.
\end{remark}

\section{Rational representation of $Z_{h}(\mathbf{t}_{\mathcal{N}})$} \labelpar{ss:Zrational}

In this section we express $Z_h(\mathbf{t}_{\mathcal{N}})$ with rational functions given by the structure of the filtered $\mathcal{M}_{0}$-module $\mathcal{M}_{a}$. We assume that $\delta_{n,\mathcal{N}}\geq 1$ for every $n\in\mathcal{N}$ and consider the filtration in Section \ref{sss:Hilb} associated with the subset $\widehat{\mathcal{N}}\subset \mathcal{N}$. 

In the next theorem we give two formulas for $Z_h(\mathbf{t}_{\mathcal{N}})$. The first uses generating series of the submodules $\mathcal{M}_{a}(k)$, while the second formula is a rational representation in terms of  generating functions associated with the graded pieces $\mathrm{gr}_{k}\mathcal{M}^{-}_{a, \, \mathcal{I}}$ of the holes. This will be the core of the combinatorial formulas for the polynomial parts and Seiberg--Witten invariants given in Section \ref{ss:formulas}. 
Recall the notations $(-1)^{k}\binom{\delta-2}{k} := \prod_{n\in \widehat{\mathcal{N}}} (-1)^{k_{n}} \binom{\delta_{n,\mathcal{N}}-2}{k_{n}}$ and $0\leq k \leq \delta-2$ for $0\leq k_{n} \leq \delta_{n,\mathcal{N}}-2$ for all $n\in \widehat{\mathcal{N}}$.

\begin{thm}\label{Thm3}
Let $a$ be a reduced lift of $h\in H$. 
\begin{enumerate}[(a)]
\item\label{Thm3-a}
Let $\{\mathfrak{v}_{n}\}_{n\in \mathcal{N}}$ be a set of vectors satisfying properties of Lemma \ref{Lm2} for  every lift $a-k$ with $0\leq k \leq \delta-1$ and parameter $\kappa=0$, ie. $N_{a}(\ell+\mathfrak{v}_{n},n)\geq k_{n}$ for all $n\in \mathcal{N}$ and $\ell \in \overline{\mathcal{M}}_{a-k}$. Then
$$ Z_{h}(\mathbf{t}_{\mathcal{N}})
= 
 \sum_{0\leq k \leq \delta-1}
(-1)^{k} \binom{\delta-1}{k}
\sum_{\emptyset\subseteq\mathcal{I}\subseteq \mathcal{N}} (-1)^{|\mathcal{I}|}
\frac{\sum_{\ell\in \mathcal{M}^{-}_{a-k, \, \mathcal{I}}} 
{\mathbf{t}}^{\mathbf{c}_{a} + \ell} }{ \prod_{n\notin \mathcal{I}} (1- {\mathbf{t}}^{\mathfrak{v}_{n}}) }.
$$ 

\item\label{Thm3-b}
Let $\{\mathfrak{v}_{n}\}_{n\in \mathcal{N}}$ be a set of vectors satisfying properties of Lemma \ref{Lm2} for  every lift $a-k$ with $0\leq k \leq \delta-2$ and parameter $\kappa=1$, ie. $N_{a}(\ell+\mathfrak{v}_{n},n)\geq k_{n}+1$ for all $n\in \mathcal{N}$ and $\ell \in \overline{\mathcal{M}}_{a-k}$. Then
$$ Z_{h}(\mathbf{t}_{\mathcal{N}})= 
 \sum_{0\leq k \leq \delta-2}
(-1)^{k} \binom{\delta-2}{k}
\sum_{\widehat{\mathcal{N}} \subseteq\mathcal{I}\subseteq \mathcal{N}} (-1)^{|\mathcal{I}\setminus\widehat{\mathcal{N}}|}
\frac{\sum_{\ell\in \mathrm{gr}_{k}\mathcal{M}^{-}_{a, \, \mathcal{I}}} 
{\mathbf{t}}^{\mathbf{c}_{a} + \ell} }{ \prod_{n\notin \mathcal{I}} (1- {\mathbf{t}}^{\mathfrak{v}_{n}}) }.
$$ 
\end{enumerate}
%Let $\{\mathfrak{v}_n\}_{n\in\mathcal{N}}$ be the generators of the affine monoid $\mathcal{M}_{0}$  satisfying (i) of Lemma \ref{Lm2} and $N_0(\mathfrak{v}_n,n)\geq \delta_{n,\mathcal{N}}-1$ for every $n\in \mathcal{N}$. Then one has the following formulas:
%\begin{enumerate}[(a)]
%\item 
%$\displaystyle
%Z_{h}(\mathbf{t}_{\mathcal{N}})= 
%\mathbf{t}^{\mathbf{c}_{a}} \cdot \sum_{0\leq k \leq \delta-1}
%(-1)^{k} \binom{\delta-1}{k}
%\mathcal{H}_{\mathcal{M}_{a}(k)}(\mathbf{t}_{\mathcal{N}})$,
%\item
%$\displaystyle Z_{h}(\mathbf{t}_{\mathcal{N}})= 
%\sum_{\widehat{\mathcal{N}}\subseteq\mathcal{I}\subseteq \mathcal{N}} (-1)^{|\mathcal{I}\setminus\widehat{\mathcal{N}}|} \sum_{0\leq k \leq \delta-2}
%(-1)^{k} \binom{\delta-2}{k}
%\frac{\sum_{\ell\in \mathrm{gr}_{k}\mathcal{M}^{-}_{a, \, \mathcal{I}}} 
%{\mathbf{t}}^{\mathbf{c}_{a} + \ell} }{ \prod_{n\notin \mathcal{I}} (1- {\mathbf{t}}^{\mathfrak{v}_{n}}) }.$
%\end{enumerate}
\end{thm}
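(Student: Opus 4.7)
The plan is to combine the decompositions of $Z_{h}(\mathbf{t}_{\mathcal{N}})$ already obtained in Sections \ref{ss:eqdec} and \ref{s:alter-decomp} with the rational representations of Hilbert series supplied by Lemma \ref{Thm2}. The two parts correspond to two different ways of writing $Z_{h}$ as a binomial sum over filtered, respectively graded, pieces of $\mathcal{M}_{a}$.

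For part (\ref{Thm3-a}), I would start from the alternative decomposition (\ref{Eq:Zh-alternative}),
$$Z_{h}(\mathbf{t}_{\mathcal{N}}) = \sum_{0\leq k\leq \delta-1}(-1)^{k}\binom{\delta-1}{k}\sum_{\ell\in \mathcal{M}_{a}(k)}\mathbf{t}^{\mathbf{c}_{a}+\ell}.$$
Using the identification $\mathcal{M}_{a}(k)=\mathcal{M}_{a-k}$ from Remark \ref{rk:Ma(k)}, the inner sum becomes $\mathbf{t}^{\mathbf{c}_{a}}\cdot \mathcal{H}_{\mathcal{M}_{a-k}}(\mathbf{t}_{\mathcal{N}})$. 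Substituting the rational expression supplied by Lemma \ref{Thm2}(\ref{Thm2-a}) (equivalently, Corollary \ref{genser} applied to $\mathcal{M}_{a-k}$) rewrites this as the alternating sum over subsets $\emptyset\subseteq \mathcal{I}\subseteq \mathcal{N}$ that appears in the theorem.

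For part (\ref{Thm3-b}), I would start from equation (\ref{EqZgen}). First I would observe that for $n\notin \widehat{\mathcal{N}}$ one has $\delta_{n,\mathcal{N}}=1$, and the generalized binomial identity $\binom{-1}{m}=(-1)^{m}$ cancels the sign $(-1)^{N_{a}(\ell,n)}$, so only the factors indexed by $n\in \widehat{\mathcal{N}}$ contribute nontrivially. Then I would partition $\mathcal{S}_{a}\subseteq \mathcal{M}_{a}$ according to the tuple $k_{n}:=N_{a}(\ell,n)$, $n\in \widehat{\mathcal{N}}$, realising it as the disjoint union over $0\leq k\leq \delta-2$ of the graded pieces $\mathrm{gr}_{k}\mathcal{M}_{a}$ of Subsection \ref{sss:Hilb} with $\mathcal{J}=\widehat{\mathcal{N}}$. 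This yields
$$Z_{h}(\mathbf{t}_{\mathcal{N}}) = \sum_{0\leq k\leq \delta-2}(-1)^{k}\binom{\delta-2}{k}\sum_{\ell\in \mathrm{gr}_{k}\mathcal{M}_{a}}\mathbf{t}^{\mathbf{c}_{a}+\ell},$$
and applying Lemma \ref{Thm2}(\ref{Thm2-b}) to each $\mathcal{H}_{\mathrm{gr}_{k}\mathcal{M}_{a}}(\mathbf{t}_{\mathcal{N}})$ delivers the desired rational form.

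The only genuinely technical point is the uniform existence of vectors $\{\mathfrak{v}_{n}\}_{n\in \mathcal{N}}$: both invocations of Lemma \ref{Thm2} need them to satisfy Lemma \ref{Lm2} simultaneously for every shifted lift $a-k$ in a finite range, with $\kappa=0$ in part (a) and $\kappa=1$ in part (b). Since the construction in the proof of Lemma \ref{Lm2} only requires the scalars $\lambda_{n}$ to exceed a finite, lift- and $\kappa$-dependent threshold, I would take $\lambda_{n}$ to be the maximum of these thresholds over the finitely many values of $k$; this produces a single set of vectors working uniformly, completing both proofs.
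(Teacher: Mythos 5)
Your argument is correct and follows essentially the same route as the paper: part (a) is the alternative decomposition (\ref{Eq:Zh-alternative}) combined with Remark \ref{rk:Ma(k)} and Lemma \ref{Thm2}(\ref{Thm2-a}), and part (b) is the partition $\mathcal{S}_{a}=\bigsqcup_{0\leq k\leq\delta-2}\mathrm{gr}_{k}\mathcal{M}_{a}$ applied to (\ref{EqZ}) followed by Lemma \ref{Thm2}(\ref{Thm2-b}). Your closing remark on choosing the $\lambda_{n}$ as a maximum over the finitely many shifts $a-k$ is a sensible justification of the hypothesis on $\{\mathfrak{v}_{n}\}_{n\in\mathcal{N}}$, which the paper leaves implicit in the statement of the theorem.
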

\begin{proof}
\begin{enumerate}[(a)]
\item
By the alternative decomposition (\ref{Eq:Zh-alternative}) we have 
$$
Z_{h}(\mathbf{t}_{\mathcal{N}})=\mathbf{t}^{\mathbf{c}_{a}} \cdot \sum_{0\leq k \leq \delta-1}
(-1)^{k} \binom{\delta-1}{k} \mathcal{H}_{\mathcal{M}_{a}(k)}(\mathbf{t}_{\mathcal{N}}).
$$
Applying  Lemma \ref{Thm2}(\ref{Thm2-a}),  we get the desired rational form.

\item
Note that $\displaystyle\mathcal{S}_{a} = \bigsqcup_{0\leq k \leq \delta-2}\mathrm{gr}_{k}\mathcal{M}_{a}$, thus $Z_{h}(\mathbf{t}_{\mathcal{N}}) = \sum_{0\leq k \leq \delta-2} (-1)^{k} \binom{\delta-2}{k} \mathcal{H}_{\mathrm{gr}_{k}\mathcal{M}_{a}}(\mathbf{t}_{\mathcal{N}})$ by (\ref{EqZ}). Finally, Lemma \ref{Thm2}(\ref{Thm2-b}) implies the formula.
%\qedhere
\end{enumerate}
%\vskip10mm
%(a) It is formula (\ref{Eq:Zh-alternative}), given by the alternative decomposition of $Z_{h}$.
%
%(b) \ Since $\mathcal{S}_{a}=\displaystyle \bigsqcup_{0\leq k \leq \delta-2}\mathrm{gr}_{k}\mathcal{M}_{a}$, we use (\ref{EqZ}) and Lemma \ref{Thm2}(b) to deduce the following identities
%\begin{align}\label{EqZproof}
%\begin{split}
%Z_{h}(\mathbf{t}_{\mathcal{N}})&=\mathbf{t}^{\mathbf{c}_{a}}\sum_{0\leq k \leq \delta-2}
%(-1)^{k} \binom{\delta-2}{k} \mathcal{H}_{\mathrm{gr}_{k}\mathcal{M}_{a}}{(\mathbf{t}_{\mathcal{N}})}\\
%&=
%\sum_{\mathcal{I}\subseteq \mathcal{N}} (-1)^{|\mathcal{I}|+|\widehat{\mathcal{N}}|} \sum_{0\leq k \leq \delta-2}
%(-1)^{k} \binom{\delta-2}{k}
%\frac{\sum_{\ell\in \mathrm{gr}_{k}\mathcal{M}^{-}_{a, \, \mathcal{I}}} 
%{\mathbf{t}}^{\mathbf{c}_{a} + \ell} }{ \prod_{n\notin \mathcal{I}} (1-{\mathbf{t}}^{\mathfrak{v}_{n}}) }. 
%\end{split}
%\end{align}
%Assume there exists $n'\in \widehat{\mathcal{N}}\setminus \mathcal{I}$ for a fixed $\mathcal{I}\subset \mathcal{N}$. Then $\mathrm{gr}_{k}\mathcal{M}^{-}_{a, \, \mathcal{I}}$ does not depend on $k_{n'}$. Therefore, the summand associated with $\mathcal{I}$ in 
%(\ref{EqZproof}) becomes zero by the term $\sum_{k_{n'}=0}^{\delta_{n',\mathcal{N}}}(-1)^{k_{n'}} \binom{\delta_{n',\mathcal{N}}-2}{k_{n'}}=0$. Hence, the first sum of (\ref{EqZproof}) can be considered over $\widehat{\mathcal{N}}\subseteq \mathcal{I}\subseteq \mathcal{N}$.
\end{proof}%

\begin{remark}
Sometimes it is simpler to choose `universal' generators $\mathfrak{v}_{n}$ of $\mathcal{M}_{0}$ satisfying the properties of Lemma \ref{Lm2}(\ref{Lm2-a}) and $\overline{N}_{0}(\mathfrak{v}_{n},n)\geq \delta_{n}-1$ for every $n\in \mathcal{N}$. In general, they are larger, but their properties are easier to check.
\end{remark}

\subsection{\bf Example (part II)} \labelpar{ex:p2}
We use all the notations and calculations from Section \ref{example}. Recall that the chosen generators of $\mathcal{M}_{0}$ are $\mathfrak{v}_1=(62,28,24)$, $\mathfrak{v}_2=(84,42,36)$ and $\mathfrak{v}_3=(24,12,14)$. 
Consider the filtration defined by Section \ref{sss:Hilb} associated with the subset $\widehat{\mathcal{N}}=\{n_2\}$. Then, Theorem \ref{Thm3} (b) implies the following rational form
\begin{align*}
Z(\mathbf{t}_{\mathcal{N}})=\frac{\sum_{\mathrm{gr}_{0}\mathcal{M}_{0,n_2}^{-}}\mathbf{t}^{\ell}}{(1-\mathbf{t}^{(62,28,24)})(1-\mathbf{t}^{(24,12,14)})}-\frac{\sum_{\mathrm{gr}_{0}\mathcal{M}_{0,\{n_1,n_2\}}^{-}}\mathbf{t}^{\ell}}{1-\mathbf{t}^{(24,12,14)}}-
\frac{\sum_{\mathrm{gr}_{0}\mathcal{M}_{0,\{n_2,n_3\}}^{-}}\mathbf{t}^{\ell}}{1-\mathbf{t}^{(62,28,24)}}+\sum_{\mathrm{gr}_{0}\mathcal{M}_{0,\{n_1,n_2,n_3\}}^{-}}\mathbf{t}^{\ell},
\end{align*}
where 
$\mathrm{gr}_{0}\mathcal{M}_{0,\mathcal{I}}^{-}=\{ \ell\in \Box  \cap \mathbb{Z}^{3}(0)  \,|\,  N_{0}(\ell,n_i) < 0 \, \, \, \forall n_i\in \mathcal{I}\setminus n_2 \, \, \, \mathrm{and} \, \, \, N_{0}(\ell,n_2) =0 \}$ 
for every $\{n_2\}\subseteq \mathcal{I} \subseteq \{n_1,n_2,n_3\}$. Thus, calculating the above associated graded sets we get  
\begin{equation*}
\begin{array}{c}
\mathrm{gr}_{0}\mathcal{M}_{0,n_2}^{-}=\{(0,0,0),(12,6,7),(31,14,12),(42,21,18),(43,20,19),(54,27,25),(73,35,30),(85,41,37)\},\\
\mathrm{gr}_{0}\mathcal{M}_{0,\{n_1,n_2\}}^{-}=\{(31,14,12),(43,20,19),(73,35,30),(85,41,37)\},\\
\mathrm{gr}_{0}\mathcal{M}_{0,\{n_2,n_3\}}^{-}=\{(12,6,7),(43,20,19),(54,27,25),(85,41,37)\} \, \, \mbox{and}\\ \mathrm{gr}_{0}\mathcal{M}_{0,\{n_1,n_2,n_3\}}^{-}=\{(43,20,19),(85,41,37)\}.
\end{array}
\end{equation*}

\section{Seiberg--Witten invariants and polynomial parts via the holes of $\mathcal{M}_{a}$} \labelpar{s:SW}

\subsection{Notations and preliminary results}

\subsubsection{\bf {Spin}$^{c}$--structures} 
Recall that $\widetilde{X}$ is a smooth 4-manifold with boundary $M$, and let $\widetilde{\sigma}_{can}$ be the canonical $spin^c$--structure on $\widetilde{X}$.  Its
first Chern class $c_1( \widetilde{\sigma}_{can})=-K\in L'$, where $K$ is the canonical element in $L'$ defined by the
adjunction equations $(K+E_v,E_v)+2=0$ for all $v\in\mathcal{V}$. 
The set of $spin^c$--structures $\mathrm{Spin}^c(\widetilde{X})$ of $\widetilde{X}$ is an $L'$--torsor: 
the $L'$--action is denoted by $l'*\widetilde{\sigma}$ and $c_1(l'*\widetilde{\sigma})=c_1(\widetilde{\sigma})+2l'$.
The $spin^c$--structures of $M$ are obtained by restrictions from $\widetilde{X}$. The set 
$\mathrm{Spin}^c(M)$ is an $H$--torsor, compatible with the restriction and the projection $L'\to H$. Hence, for any $\sigma\in \mathrm{Spin}^c(M)$ one has $\sigma=h*\sigma_{can}$ for some $h\in H$, where 
the canonical $spin^c$--structure $\sigma_{can}$ of $M$ is the restriction 
of the canonical $spin^c$--structure $\widetilde{\sigma}_{can}$ of $\widetilde{X}$ (see eg. \cite[p.\,415]{GS}).

We denote by $\mathfrak{sw}_{\sigma}(M)$ the Seiberg--Witten invariants of $M$ indexed by the $spin^c$-structures $\sigma\in \mathrm{Spin}^c(M)$. 

\subsubsection{}\labelpar{ss:order} Recall that
we have defined the Lipman cone $\mathcal{S}':=\{l'\in L'\,:\, (l',E_v)\leq 0 \ \mbox{for all $v$}\}$ which is generated over $\mathbb{Z}_{\geq 0}$ by the duals $E_v^*$. Moreover, for any $l_1,l_2\in L\otimes \mathbb{Q}$ one can say $l_1\geq l_2$ if $l_1-l_2=\sum \lambda_vE_v$ with all $\lambda_v\geq 0$. 
Then, for any fixed $x\in L'$ the set
\begin{equation}\label{eq:finite}
\{l'\in \mathcal{S}'\,:\, l'\ngeq x\} \ \ \mbox{is finite},
\end{equation}
since  all the entries  of $E_v^*$ are strictly positive. 

\subsubsection{\bf Counting functions and Seiberg--Witten invariants}\label{ss:sw}
For any $h\in H$ we define the {\it counting function} of the coefficients of  $Z_{h}(\mathbf{t})=\sum_{[l']=h}p_{l'}\mathbf{t}^{l'}$  by 
$$x\mapsto Q_{h}(x):=\sum_{l'\not\geq x,\, [l']=h} \, p_{l'}.$$
It is well defined, since the finiteness of above sum follows  
from (\ref{eq:finite}). 

By a powerful result of N\'emethi \cite{NJEMS} we know that if $x\in -K+ \textnormal{int}(\mathcal{S}')$, $[x]=h$ then 
\begin{equation}
Q_{h}(x)=-\frac{(K+2x)^2+|\mathcal{V}|}{8}-\mathfrak{sw}_{-h*\sigma_{can}}(M).
\end{equation}
In other words, $Q_{h}(x)$ is a multivariable quadratic polynomial on $L$ with constant term 
\begin{equation}\label{swnorm}
\mathfrak{sw}^{norm}_h(M):=-\frac{(K+2r_h)^2+|\mathcal{V}|}{8}-\mathfrak{sw}_{-h*\sigma_{can}}(M),
\end{equation}
which is called the {\em normalized Seiberg--Witten invariant} of $M$ associated with $h\in H$ (cf. \cite{Osz,NOSZ,NJEMS}). 
Furthermore, \cite{LN} gives the following interpretation: there exists a conical chamber decomposition of the real cone $\mathcal{S}'_{\mathbb{R}}=\cup_{\tau}\mathcal{C}_{\tau}$, a sublattice $\widetilde L\subset L$ and $l'_* \in \mathcal{S}'$ such that $Q_h(l')$ is a polynomial on  
$\widetilde L\cap(l'_* +\mathcal{C_{\tau}})$, say $Q^{\mathcal{C}_{\tau}}_h(l')$. This allows to define the {\em multivariable periodic constant} by $\mathrm{pc}^{\mathcal{C}_{\tau}}(Z_h):= Q^{\mathcal{C}_{\tau}}_h(0)$ 
associated with $h\in H$ and $\mathcal{C}_{\tau}$. Moreover, $Z_h(\mathbf{t})$ is rather special in the sense that all $Q^{\mathcal{C}_{\tau}}_h$ are equal for any $\mathcal{C}_{\tau}$. In particular, we say that there exists the periodic constant  
$\mathrm{pc}^{S'_{\mathbb{R}}}(Z_h):=\mathrm{pc}^{\mathcal{C}_{\tau}}(Z_h)$ associated with $S'_{\mathbb{R}}$, and in fact, it is equal with $\mathfrak{sw}^{norm}_h(M)$. 

\subsubsection{\bf Reduction}\labelpar{ss:reductionb}
As we have mentioned in Section \ref{ss:redtps}, \cite[Theorem 5.4.2]{LN} proves that, from Seiberg--Witten invariant point of view, the number of variables can be reduced to $|\mathcal{N}|$. 
Thus, there exists the periodic constant of $Z_h(\mathbf{t}_{\mathcal{N}})$ associated with the projected real Lipman cone  
$\pi_{\mathcal{N}}(S'_{\mathbb{R}})$ and 
$$\mathrm{pc}^{\pi_{\mathcal{N}}(S'_{\mathbb{R}})}(Z_h(\mathbf{t}_{\mathcal{N}}))=\mathrm{pc}^{S'_{\mathbb{R}}}(Z_h(\mathbf{t}))=
\mathfrak{sw}^{norm}_h(M).$$

\subsubsection{\bf Decomposition into `polynomial and negative degree' parts \cite{LSz}}\labelpar{ss:polSW} 
Lemma 22 and 23 of \cite{LSz} guarantee the existence of the following unique decompositions:
\begin{enumerate}[(i)]
\item 
$Z_h(\mathbf{t}_{\mathcal{N}})=P^n_h(\mathbf{t}_{\mathcal{N}})+Z^{n,-}_h(\mathbf{t}_{\mathcal{N}})$ with respect to the variable $t_n$, associated with any $n\in\mathcal{N}$,
\item 
$Z_h(\mathbf{t}_{\mathcal{N}})=P^{nn'}_h(\mathbf{t}_{\mathcal{N}})+Z^{nn',-}_h(\mathbf{t}_{\mathcal{N}})$ with respect to variables $t_n$ and $t_{n'}$  associated with any chain $(n,n')$ connecting the neighbouring nodes $n$ and $n'$.
\end{enumerate}
$P^n_h$ and $P^{n,n'}_h$ are the {\em polynomial parts}, while $Z^{n,-}_h$ and $Z^{nn',-}_h$ are the {\em `negative degree parts'} of the decomposition. The last appelation is inherited from the first case  where $Z^{n,-}_h$ has negative degree with respect to variable $t_{n}$.

These special decompositions associated with the nodes $n\in \mathcal{N}$ and the chains $(n,n')$ induce a unique decomposition 
$$Z_h(\mathbf{t}_{\mathcal{N}})=P_h(\mathbf{t}_{\mathcal{N}})+Z^{-}_h(\mathbf{t}_{\mathcal{N}})$$
with the following special properties:
\begin{enumerate}[(i)]
\item 
$P_h(\mathbf{t}_{\mathcal{N}})$ is a Laurent polynomial supported on $\pi_{\mathcal{N}}(L')\setminus \mathbb{Q}_{<0}\langle E_n\rangle_{n\in \mathcal{N}}$,
\item 
$P_h(1)=\textnormal{pc}^{\pi_{\mathcal{N}}(S_{\mathbb{R}}')}(Z_h(\mathbf{t}_{\mathcal{N}}))=\mathfrak{sw}^{norm}_h(M)$ 
and 
\begin{equation}\label{eq:polypart}
P_{h}(\mathbf{t}_{\mathcal{N}}) = \sum_{n<n'}P_{h}^{nn'}(\mathbf{t}_{\mathcal{N}}) - \sum_{n\in\mathcal{N}}(\delta_{n,\mathcal{N}}-1) P^{n}_{h}(\mathbf{t}_{\mathcal{N}}),
\end{equation}
\item 
$\textnormal{pc}^{\pi_{\mathcal{N}}(S_{\mathbb{R}}')}(Z^{-}_h(\mathbf{t}_{\mathcal{N}}))=0$ and 
$Z_{h}^{-}(\mathbf{t}_{\mathcal{N}}) = \sum_{n<n'}Z_{h}^{nn',-}(\mathbf{t}_{\mathcal{N}}) - \sum_{n\in\mathcal{N}}(\delta_{n,\mathcal{N}}-1) Z_{h}^{n,-}(\mathbf{t}_{\mathcal{N}}).$
\end{enumerate}

\subsection{\bf Polynomial and numerical datas associated with $\mathcal{M}_{a}$}\labelpar{ss:poldata}

In the sequel, for any $h\in H$ we choose a lift $a$ so that $\mathbf{c}_{a}\in \sum_{n\in \mathcal{N}}[0,1) E_{n}$, ie. it is uniquely determined by $h$. We fix the generators $\{\mathfrak{v}_{n}\}_{n\in \mathcal{N}}$ as in Theorem \ref{Thm3}. 

For any $\ell\in \overline{\mathcal{M}}_{a}\subset\mathbb{Z}^{\mathcal{N}}$ and $\mathcal{I}\subseteq\mathcal{N}$ we associate the generating function
$$\mathcal{H}_{(\ell,\mathcal{I})}(\mathbf{t}_{\mathcal{N}}):=\frac{\mathbf{t}^{\ell}}{\prod_{\widetilde{n}\in\mathcal{I}} (1-\mathbf{t}^{\mathfrak{v}_{\widetilde{n}}})}.$$
Then one can associate a polynomial $Pol_{(\ell,\mathcal{I})}^n(\mathbf{t}_{\mathcal{N}})$ with any $n\in\mathcal{N}$, which is defined as the polynomial part of the function $\mathcal{H}_{(\ell,\mathcal{I})}$ with respect to $t_{n}$, given by the unique decomposition proved in  \cite[Lemma 7.0.2]{BN} (see also \cite[Lemma 22]{LSz}). The explicit form of the polynomial can be deduced by division with remainder with respect to the variable $t_{n}$ (other variables are considered as coefficients): we write uniquely 
\begin{equation}\label{eq:onevar}
\mathbf{t}^{\ell}_{\mathcal{N}}=Pol^{n}_{(\ell,\mathcal{I})}(\mathbf{t}_{\mathcal{N}})\cdot\prod_{\widetilde{n}\in\mathcal{I}} (1-\mathbf{t}^{\mathfrak{v}_{\widetilde{n}}})+R^{n}(\mathbf{t}_{\mathcal{N}})
\end{equation}
such that $R^{n}$ is supported on $\sum_{\widetilde{n}\in\mathcal{I}}[0,1)\pi_n(\mathfrak{v}_{\widetilde{n}})$ as a one-variable polynomial in $t_n$. We define $pc^n_{(\ell,\mathcal{I})}:=Pol^n_{(\ell,\mathcal{I})}(1)$.

We also consider the polynomial part $Pol^{nn'}_{(\ell,\mathcal{I})}$ of $\mathcal{H}^{(\ell,\mathcal{I})}$ associated with a chain  $(n,n')$. Removing $(n,n')$ from $\Gamma$ we get two subgraphs $\Gamma_n$ and $\Gamma_{n'}$ containing $n$ and $n'$, respectively. Let $\mathcal{N}(\Gamma_n)$ and $\mathcal{N}(\Gamma_{n'})$ be the set of nodes of the respective subgraphs. Then, there are $\alpha,\beta\in \mathbb{R}_{>0}\langle E_{n},E_{n'}\rangle$ such that $\pi_{n,n'}(\mathfrak{v}_{\widetilde{n}})$ for $\widetilde{n}\in\mathcal{I}\cap\mathcal{N}(\Gamma_n)$ and $\pi_{n,n'}(\mathfrak{v}_{\widetilde{n}})$ for $\widetilde{n}\in\mathcal{I}\cap\mathcal{N}(\Gamma_{n'})$ are positive integral multiples of the vectors $\alpha$ and $\beta$, respectively. This condition together with \cite[Lemma 23]{LSz} guarantee the unique decomposition 
\begin{align}\label{eq:twovar}
\mathbf{t}^{\ell}_{\mathcal{N}} 
=& 
Pol^{nn'}_{(\ell,\mathcal{I})}(\mathbf{t}_{\mathcal{N}})\cdot\prod_{\widetilde{n}\in\mathcal{I}} (1-\mathbf{t}^{\mathfrak{v}_{\widetilde{n}}})
+
R^{nn'}_{1}(\mathbf{t}_{\mathcal{N}})\cdot\prod_{\widetilde{n}\in\mathcal{I}\cap\mathcal{N}({\Gamma_{n'}})} (1-\mathbf{t}^{\mathfrak{v}_{\widetilde{n}}})
\\
&+
R^{nn'}_{2}(\mathbf{t}_{\mathcal{N}})\cdot\prod_{\widetilde{n}\in\mathcal{I}\cap\mathcal{N}({\Gamma_{n}})} (1-\mathbf{t}^{\mathfrak{v}_{\widetilde{n}}})
+
R^{nn'}(\mathbf{t}_{\mathcal{N}})\nonumber
\end{align}
with the following conditions:
\begin{enumerate}[(i)]
\item 
$Pol^{nn'}_{(\ell,\mathcal{I})}(\mathbf{t}_{\mathcal{N}})$ is supported on $\mathbb{Z}^2\setminus\mathbb{Z}_{<0}\langle E_{n},E_{n'}\rangle$,

\item
$R^{nn'}_{1}(\mathbf{t}_{\mathcal{N}})$ is supported on $\mathbb{Z}^2 \cap \big( \mathbb{Z}_{\leq 0 } \langle E_{n} \rangle + \sum_{\widetilde{n}\in\mathcal{I}\cap\mathcal{N}({\Gamma_{n}})} [0,1)\pi_{n,n'}(\mathfrak{v}_{\widetilde{n}})\big)$,

\item
$R^{nn'}_{2}(\mathbf{t}_{\mathcal{N}})$ is supported on $\mathbb{Z}^2 \cap \big( \mathbb{Z}_{\leq 0 } \langle E_{n'} \rangle + \sum_{\widetilde{n}\in\mathcal{I}\cap\mathcal{N}({\Gamma_{n'}})} [0,1)\pi_{n,n'}(\mathfrak{v}_{\widetilde{n}})\big)$ and

\item
$R^{nn'}(\mathbf{t}_{\mathcal{N}})$ is supported on $\mathbb{Z}^2 \cap \big(  \sum_{\widetilde{n}\in\mathcal{I}} [0,1)\pi_{n,n'}(\mathfrak{v}_{\widetilde{n}}) \big)$
\end{enumerate}
as two-variable polynomial in $t_n$ and $t_{n'}$. We define the numerical data $pc^{nn'}_{(\ell,\mathcal{I})}:=Pol^{nn'}_{(\ell,\mathcal{I})}(1)$ as well. 
\begin{remark}
Notice that $pc^n_{(\ell,\mathcal{I})}$ is the periodic constant (of the Taylor expansion) of the one-variable function $\mathcal{H}^{(\ell,\mathcal{I})}\mid_{t_{n'}=1,n'\neq n}$,
while $pc^{nn'}_{(\ell,\mathcal{I})}$ 
is the periodic constant of (the Taylor expansion of) the two-variable function $\mathcal{H}^{(\ell,\mathcal{I})}\mid_{t_{n''}=1,n''\neq n,n'}$ associated with the chamber $\mathbb{R}_{>0}\langle \alpha,\beta\rangle$ (cf. \cite{LN}).
\end{remark}

Finally, we consider the polynomial
\begin{equation}\label{eq:Pol(l,I)}
 Pol_{(\ell,\mathcal{I})}(\mathbf{t}_{\mathcal{N}}):=\sum_{n<n'} Pol^{nn'}_{(\ell,\mathcal{I})}(\mathbf{t}_{\mathcal{N}})-\sum_{n\in\mathcal{N}}(\delta_{n,\mathcal{N}}-1)Pol^{n}_{(\ell,\mathcal{I})}(\mathbf{t}_{\mathcal{N}})
\end{equation}
and the constant $pc_{(\ell,\mathcal{I})}:=Pol_{(\ell,\mathcal{I})}(1)$ associated with the pair $(\ell,\mathcal{I})$.

\subsection{\bf Formulas for polynomial parts and Seiberg--Witten invariants}
\labelpar{ss:formulas} 
The decomposition of $Z_h$ presented in Section \ref{ss:polSW} together with Section \ref{ss:poldata} and Theorem \ref{Thm3}(b) imply the following formulas

\begin{cor}\labelpar{cor:pol}
$$P_h(\mathbf{t}_{\mathcal{N}})= 
\sum_{\widehat{\mathcal{N}}\subseteq\mathcal{I}\subseteq \mathcal{N}} (-1)^{|\mathcal{I}\setminus\widehat{\mathcal{N}}|} \sum_{0\leq k \leq \delta-2}
(-1)^{k} \binom{\delta-2}{k}
\sum_{\ell\in \mathrm{gr}_{k}\mathcal{M}^{-}_{a, \, \mathcal{I}}} 
Pol_{(\ell,\mathcal{N}\setminus \mathcal{I})}(\mathbf{t}_{\mathcal{N}}).$$ 
\end{cor}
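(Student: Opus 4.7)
The plan is to apply the node-wise and chain-wise polynomial part operators termwise to the rational form of $Z_h(\mathbf{t}_{\mathcal{N}})$ given by Theorem~\ref{Thm3}(b) and then reassemble $P_h$ using the combinatorial identity (\ref{eq:polypart}).

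First I would rewrite Theorem~\ref{Thm3}(b) as
\begin{equation*}
Z_{h}(\mathbf{t}_{\mathcal{N}})
=
\sum_{0\leq k \leq \delta-2}(-1)^{k}\binom{\delta-2}{k}
\sum_{\widehat{\mathcal{N}}\subseteq \mathcal{I}\subseteq \mathcal{N}}(-1)^{|\mathcal{I}\setminus\widehat{\mathcal{N}}|}
\sum_{\ell\in \mathrm{gr}_{k}\mathcal{M}^{-}_{a,\mathcal{I}}}
\mathcal{H}_{(\mathbf{c}_{a}+\ell,\, \mathcal{N}\setminus\mathcal{I})}(\mathbf{t}_{\mathcal{N}}),
\end{equation*}
so every summand is a rational function of the standard shape considered in Section~\ref{ss:poldata}. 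The key observation is that the one-variable decomposition (\ref{eq:onevar}) and the two-variable decomposition (\ref{eq:twovar}) are both \emph{unique} (by \cite[Lemma 22, Lemma 23]{LSz}), so the assignments $f\mapsto Pol^{n}(f)$ and $f\mapsto Pol^{nn'}(f)$ are $\mathbb{Z}$-linear operators on the space of rational functions of this shape. Consequently the decompositions $Z_h=P_h^n+Z_h^{n,-}$ and $Z_h=P_h^{nn'}+Z_h^{nn',-}$ from Section~\ref{ss:polSW} can be applied termwise.

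Next, for each $\ell\in \mathrm{gr}_k\mathcal{M}^{-}_{a,\mathcal{I}}$ I would identify, directly from the definitions in Section~\ref{ss:poldata}, that
\begin{equation*}
Pol^{n}\big(\mathcal{H}_{(\mathbf{c}_{a}+\ell,\, \mathcal{N}\setminus\mathcal{I})}(\mathbf{t}_{\mathcal{N}})\big)
=Pol^{n}_{(\mathbf{c}_{a}+\ell,\, \mathcal{N}\setminus\mathcal{I})}(\mathbf{t}_{\mathcal{N}}),
\end{equation*}
and similarly for $Pol^{nn'}$; this is essentially the definition of these polynomials (division with remainder with respect to $t_n$, respectively to $t_n$ and $t_{n'}$, using the denominator factors indexed by $\mathcal{N}\setminus\mathcal{I}$). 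Applying this to each summand and then using the linear combination (\ref{eq:polypart}) which defines $P_h$ from the $P_h^n$ and $P_h^{nn'}$, the same combination appears inside each summand; by the definition (\ref{eq:Pol(l,I)}) this combination is exactly $Pol_{(\mathbf{c}_{a}+\ell,\, \mathcal{N}\setminus\mathcal{I})}$, which I would abbreviate to $Pol_{(\ell,\mathcal{N}\setminus\mathcal{I})}$ as in the statement. Collecting the signs and binomials from Theorem~\ref{Thm3}(b) then yields the claimed formula.

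The main technical point to verify carefully is the exchange of the polynomial-part operators with the triple sum: one must argue that each $Pol^{n}_{(\mathbf{c}_{a}+\ell,\mathcal{N}\setminus\mathcal{I})}$ and $Pol^{nn'}_{(\mathbf{c}_{a}+\ell,\mathcal{N}\setminus\mathcal{I})}$ is well-defined (i.e., the support conditions in (\ref{eq:onevar}) and (\ref{eq:twovar}) are met, which in the two-variable case requires $\pi_{n,n'}(\mathfrak{v}_{\widetilde{n}})$ to lie in the right ray on each side of $(n,n')$), and that the corresponding remainders add up to the negative-degree parts $Z_h^{n,-}$, $Z_h^{nn',-}$ as in Section~\ref{ss:polSW}. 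This is the only nontrivial verification; once it is in place, the formula is a direct bookkeeping of signs and indices.
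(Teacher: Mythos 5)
Your proposal is correct and follows essentially the same route as the paper, which derives the corollary by combining the decomposition of Section \ref{ss:polSW}, the termwise polynomial data of Section \ref{ss:poldata}, and the rational representation of Theorem \ref{Thm3}(\ref{Thm3-b}); you simply make explicit the linearity of the polynomial-part operators and the bookkeeping that the paper leaves implicit. Your remark that the numerators are really $\mathbf{t}^{\mathbf{c}_a+\ell}$, so that $Pol_{(\ell,\mathcal{N}\setminus\mathcal{I})}$ abbreviates $Pol_{(\mathbf{c}_a+\ell,\mathcal{N}\setminus\mathcal{I})}$, correctly matches the paper's usage in its examples.
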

In particular, one gets 
\begin{cor} \labelpar{cor:swnorm}
$$\mathfrak{sw}^{norm}_h(M)= 
\sum_{\widehat{\mathcal{N}}\subseteq\mathcal{I}\subseteq \mathcal{N}} (-1)^{|\mathcal{I}\setminus\widehat{\mathcal{N}}|} \sum_{0\leq k \leq \delta-2}
(-1)^{k} \binom{\delta-2}{k}
\sum_{\ell\in \mathrm{gr}_{k}\mathcal{M}^{-}_{a, \, \mathcal{I}}} 
pc_{(\ell,\mathcal{N}\setminus \mathcal{I})}.$$ 
\end{cor}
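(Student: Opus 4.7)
The plan is to obtain Corollary \ref{cor:swnorm} as a direct specialization of Corollary \ref{cor:pol} by evaluating both sides at $\mathbf{t}_{\mathcal{N}}=1$. The starting point is the identity
$$P_h(\mathbf{t}_{\mathcal{N}})=
\sum_{\widehat{\mathcal{N}}\subseteq\mathcal{I}\subseteq \mathcal{N}} (-1)^{|\mathcal{I}\setminus\widehat{\mathcal{N}}|} \sum_{0\leq k \leq \delta-2}
(-1)^{k} \binom{\delta-2}{k}
\sum_{\ell\in \mathrm{gr}_{k}\mathcal{M}^{-}_{a, \, \mathcal{I}}}
Pol_{(\ell,\mathcal{N}\setminus \mathcal{I})}(\mathbf{t}_{\mathcal{N}})$$
from Corollary \ref{cor:pol}. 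Both sides are Laurent polynomials in the node variables, so evaluating at $\mathbf{t}_{\mathcal{N}}=1$ is legal; no issue of convergence or rearrangement arises, and finiteness of the sum over $\ell$ is already implicit in the statement of Corollary \ref{cor:pol}.

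The first step is to identify $P_h(1)$ with $\mathfrak{sw}^{norm}_h(M)$. This is exactly property (ii) of the decomposition $Z_h(\mathbf{t}_{\mathcal{N}})=P_h(\mathbf{t}_{\mathcal{N}})+Z_h^{-}(\mathbf{t}_{\mathcal{N}})$ recalled in Section \ref{ss:polSW}, namely $P_h(1)=\textnormal{pc}^{\pi_{\mathcal{N}}(S'_{\mathbb{R}})}(Z_h(\mathbf{t}_{\mathcal{N}}))=\mathfrak{sw}^{norm}_h(M)$, which in turn rests on \cite{NJEMS} and the reduction \cite[Theorem 5.4.2]{LN} summarized in Section \ref{ss:reductionb}. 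So the left-hand side of the target formula is simply $P_h(1)$.

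The second step is to match the right-hand sides term by term. By the very definition given at the end of Section \ref{ss:poldata}, one has
$$pc_{(\ell,\mathcal{N}\setminus \mathcal{I})} = Pol_{(\ell,\mathcal{N}\setminus \mathcal{I})}(1),$$
so evaluating the identity of Corollary \ref{cor:pol} at $\mathbf{t}_{\mathcal{N}}=1$ turns each summand $Pol_{(\ell,\mathcal{N}\setminus \mathcal{I})}(\mathbf{t}_{\mathcal{N}})$ into the corresponding $pc_{(\ell,\mathcal{N}\setminus \mathcal{I})}$ without affecting the combinatorial prefactors $(-1)^{|\mathcal{I}\setminus\widehat{\mathcal{N}}|}$, $(-1)^{k}\binom{\delta-2}{k}$, or the index sets $\mathrm{gr}_{k}\mathcal{M}^{-}_{a,\mathcal{I}}$.

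There is no genuine obstacle here beyond bookkeeping: the content of the corollary has already been packaged into Corollary \ref{cor:pol} via Theorem \ref{Thm3}(b), the decomposition of $Z_h$ into polynomial and negative-degree parts, and the definitions in Section \ref{ss:poldata}. The only points to check carefully are that the decomposition \eqref{eq:polypart} is compatible with the pair-by-pair decomposition $Pol_{(\ell,\mathcal{I})} = \sum_{n<n'}Pol^{nn'}_{(\ell,\mathcal{I})}-\sum_n (\delta_{n,\mathcal{N}}-1)Pol^n_{(\ell,\mathcal{I})}$ from \eqref{eq:Pol(l,I)}, and that the passage from $P_h^{nn'}$, $P_h^n$ to $Pol^{nn'}_{(\ell,\mathcal{I})}$, $Pol^n_{(\ell,\mathcal{I})}$ commutes with the two inner sums over $k$ and $\ell$. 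Both are routine consequences of the linearity and uniqueness properties of the polynomial/negative-part decompositions stated in \cite[Lemmas 22, 23]{LSz}. Once these compatibilities are invoked, setting $\mathbf{t}_{\mathcal{N}}=1$ yields the claimed formula for $\mathfrak{sw}^{norm}_h(M)$.
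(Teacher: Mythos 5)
Your proposal is correct and follows exactly the paper's route: the paper derives Corollary \ref{cor:swnorm} from Corollary \ref{cor:pol} by evaluating at $\mathbf{t}_{\mathcal{N}}=1$, using $P_h(1)=\mathfrak{sw}^{norm}_h(M)$ from Section \ref{ss:polSW}(ii) and the definition $pc_{(\ell,\mathcal{I})}=Pol_{(\ell,\mathcal{I})}(1)$ from Section \ref{ss:poldata}. No discrepancies.
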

\begin{remark}
In fact, when $\Gamma$ has only two nodes, the previous formula agrees with the combinatorial formula from \cite[Corollary 7.4.2]{LN}.
\end{remark}

\subsection{\bf Example (part III)}\labelpar{ex:p3} 
According to the decompositions (\ref{eq:onevar}) and (\ref{eq:twovar}) and definition (\ref{eq:Pol(l,I)}), the polynomial datas which contribute to the polynomial part of $Z(\mathbf{t}_{\mathcal{N}})$ are the followings:
\begin{equation*}
\begin{array}{c}
Pol_{((85,41,37),\{n_1,n_3\})}(\mathbf{t}_{\mathcal{N}})=\mathbf{t}^{(-1,1,-1)}; \ \ \ Pol_{((85,41,37),\{n_3\})}(\mathbf{t}_{\mathcal{N}})=-\mathbf{t}^{(61,29,23)}-\mathbf{t}^{(37,17,9)}-\mathbf{t}^{(13,5,-5)};\\
Pol_{((31,14,12),\{n_3\})}(\mathbf{t}_{\mathcal{N}})=-\mathbf{t}^{(7,2,-2)}; \ \ \ Pol_{((73,35,30),\{n_3\})}(\mathbf{t}_{\mathcal{N}})=-\mathbf{t}^{(49,23,16)}-\mathbf{t}^{(25,11,2)}-\mathbf{t}^{(1,-1,-12)};\\ 
Pol_{((43,20,19),\{n_3\})}(\mathbf{t}_{\mathcal{N}})=-\mathbf{t}^{(19,8,5)}; \ \ \ Pol_{((54,27,25),\{n_1\})}(\mathbf{t}_{\mathcal{N}})=-\mathbf{t}^{(-8,-1,1)}; \\  
Pol_{((85,41,37),\{n_1\})}(\mathbf{t}_{\mathcal{N}})=-\mathbf{t}^{(23,13,13)};\  
Pol_{((43,20,19),\emptyset)}(\mathbf{t}_{\mathcal{N}})=-\mathbf{t}^{(43,20,19)};\\ 
Pol_{((85,41,37),\emptyset)}(\mathbf{t}_{\mathcal{N}})=\mathbf{t}^{(85,41,37)}.
\end{array}
\end{equation*}
Therefore, Corollary \ref{cor:pol} deduces 
$P(\mathbf{t}_{\mathcal{N}})=\mathbf{t}^{(-1,1,-1)}+\mathbf{t}^{(61,29,23)}+\mathbf{t}^{(37,17,9)}+\mathbf{t}^{(13,5,-5)}+\mathbf{t}^{(7,2,-2)}+\mathbf{t}^{(49,23,16)}+\mathbf{t}^{(25,11,2)}+\mathbf{t}^{(1,-1,-12)}+\mathbf{t}^{(19,8,5)}+\mathbf{t}^{(-8,-1,1)}+\mathbf{t}^{(23,13,13)}+\mathbf{t}^{(43,20,19)}+\mathbf{t}^{(85,41,37)}$. In particular, one gets $\mathfrak{sw}^{norm}(M)=13$.

One can compare the above calculations with \cite[Example 6.5]{LSz}.

\subsection{\bf Example with nontrivial $H$}\labelpar{ex:nontriv}
Let us modify the decorations of the graph from the above example and consider the following 
\vspace{0.2cm}
\begin{center}
\begin{tikzpicture}[scale=.5]
\coordinate (v11) at (0,0);
\draw  node[above] at (v11) {$-2$};
\draw[fill] (v11) circle (0.1);

\coordinate (v1) at (2,0);
\draw node[above] at (v1) {$-1$};
\draw[fill] (v1) circle (0.1);

\coordinate (v12) at (2,-2);
\draw   node[below] at (v12) {$-3$};
\draw[fill] (v12) circle (0.1);

\coordinate (u1) at (4,0);
\draw   node[above] at (u1) {$-9$};
\draw[fill] (u1) circle (0.1);

\coordinate (v0) at (6,0);
\draw   node[above] at (v0) {$-1$};
\draw[fill] (v0) circle (0.1);

\coordinate (v01) at (6,-2);
\draw   node[below] at (v01) {$-3$};
\draw[fill] (v01) circle (0.1);

\coordinate (u2) at (8,0);
\draw   node[above] at (u2) {$-12$};
\draw[fill] (u2) circle (0.1);

\coordinate (v2) at (10,0);
\draw   node[above] at (v2) {$-1$};
\draw[fill] (v2) circle (0.1);

\coordinate (v22) at (10,-2);
\draw   node[below] at (v22) {$-3$};
\draw[fill] (v22) circle (0.1);

\coordinate (v21) at (12,0);
\draw   node[above] at (v21) {$-2$};
\draw[fill] (v21) circle (0.1);

\draw[-] (v11) -- (v1);
\draw[-] (v12) -- (v1);
\draw[-] (u1) -- (v1);
\draw[-] (u1) -- (v0);
\draw[-] (v01) -- (v0);
\draw[-] (u2) -- (v0);
\draw[-] (u2) -- (v2);
\draw[-] (v21) -- (v2);
\draw[-] (v22) -- (v2);

\draw node[below left] at (v1) {$n_1$};
\draw node[below left] at (v0) {$n_2$};
\draw node[below left] at (v2) {$n_3$};
\draw node[below] at (u1) {$n_{12}$};
\draw node[below] at (u2) {$n_{23}$};

\draw node[below] at (v11) {$v_{11}$};
\draw node[left] at (v12) {$v_{12}$};
\draw node[left] at (v01) {$v_{21}$};
\draw node[left] at (v22) {$v_{32}$};
\draw node[below] at (v21) {$v_{31}$};
\end{tikzpicture}
\end{center}
Computing the determinant of the graph we find that $H$ has order $9$. Moreover, $g_{v}=0$ for $v\in \{v_{11}, n_{1},n_{2},n_{3},v_{31}\}$ since corresponding rows of $I^{-1}$ has integer entries, ie. $E^{*}_{v}\in L$. Furthermore, we can compute the following relations for the remaining generators $g_{n_{12}}=-g_{v_{12}}$, $g_{n_{23}}=-g_{v_{32}}$ and $3g_{v_{12}}=3g_{v_{32}}=0$, whence $H\simeq \mathbb{Z}_{3}\times \mathbb{Z}_{3}$. Let us choose the group element $h=g_{v_{12}}+g_{v_{32}}$ and $a=E^{*}_{v_{12}}+E^{*}_{v_{32}}$ as its reduced lift to demonstrate the computation of $P_{h}(\mathbf{t}_{\mathcal{N}})$ and $\mathfrak{sw}^{norm}_{h}(M)$.

We will use through this example the short notation $(\ell_{1},\ell_{2},\ell_{3})$ for $\ell = \ell_{1}E_{n_{1}} + \ell_{2} E_{n_{2}}+ \ell_{3} E_{n_{3}}$. 
The generalized Seifert invariants are $(\alpha_{v_{11}},\omega_{v_{11}})=(2,1)$, $(\alpha_{v_{12}},\omega_{v_{12}})=(3,1)$, $(\alpha_{n_1,n_2},\omega_{n_1,n_2})=(9,1)$, $(\alpha_{v_{21}},\omega_{v_{21}})=(3,1)$, $(\alpha_{n_2,n_3},\omega_{n_2,n_3})=(12,1)$, $(\alpha_{v_{31}},\omega_{v_{31}})=(2,1)$ and $(\alpha_{v_{32}},\omega_{v_{32}})=(3,1)$.
By Remark \ref{rem:basis}(\ref{rem:basis-c}) and Theorem \ref{Thm3}  we can consider the basis $\mathfrak{v}_{1}:=\pi_{\mathcal{N}}(E^{*}_{n_{1}})/2 =(21,6,6) $, $\mathfrak{v}_{2}:=\pi_{\mathcal{N}}(E^{*}_{n_{2}}) =(12,6,6)$ and $\mathfrak{v}_{3}:=\pi_{\mathcal{N}}(E^{*}_{n_{3}})/2 =(6,3,9)$. 

First, we compute $\mathcal{M}^{-}_{a, \emptyset} = (\Box -\mathbf{c}_{a})\cap \mathbb{Z}^{\mathcal{N}}(a)$, where $\mathbf{c}_{a} = (18,6,10)$ and $\mathbb{Z}^{\mathcal{N}}(a) = \{\ell =\sum_{i=1}^{3} \ell_{i}E_{n_{i}} \in \mathbb{Z}\langle E_{n_{i}}\rangle_{i=1}^{3}  \,|\, \ell_{1}+\ell_{2} \equiv 0\ (\textnormal{mod }9),\ \ell_{2}+\ell_{3} \equiv 0\ (\textnormal{mod }12)\}$. Since $\mathbf{c}_{a} = \pi_{\mathcal{N}}(E^{*}_{n_{1}} )/3+\pi_{\mathcal{N}}(E^{*}_{n_{3}} )/3$ and by the choice of the basis $\{\mathfrak{v}_{i} \}_{i}$ we have
$$
\mathcal{M}^{-}_{a, \emptyset} 
= 
\left\{ \ell \in \mathbb{Z}^{\mathcal{N}}(a)  \,:\!  
\begin{array}{l} 
0\leq \frac{1}{3} + \frac{\ell_{1}}{18} - \frac{\ell_{2}}{9} < \frac{1}{2} 
\\
0\leq -\frac{\ell_{1}}{9} + \frac{17\ell_{2}}{36} - \frac{\ell_{3}}{12} < 1 
\\
0\leq \frac{1}{3} - \frac{\ell_{2}}{12} + \frac{\ell_{3}}{12} < \frac{1}{2} 
 \end{array} \!\!
 \right\}
 =
\left\{ \ell \in \mathbb{Z}^{\mathcal{N}}(a)  \,:\!  
\begin{array}{l} 
-6 \leq \ell_{1} - 2\ell_{2}< 3 
\\
0\leq -4\ell_{1} + 17\ell_{2} - 3\ell_{3} < 36 
\\
-4 \leq  - \ell_{2} + \ell_{3} < 2 
 \end{array} \!\!
 \right\}. 
$$
Combining inequalities we get $-18\leq \ell_{1} <21$, $-6\leq \ell_{2} \leq 9$, $-10\leq \ell_{3}<11$, hence $-24\leq \ell_{1}+\ell_{2} <30$ and $-16 \leq \ell_{2}+\ell_{3}<20$.  Since $\ell \in \mathbb{Z}^{\mathcal{N}}(a)$, the possible values are $\ell_{1}+\ell_{2} \in \{-18, 9, 0, 9, 18, 27\}$ and $\ell_{2}+\ell_{3} \in \{-12, 0, 12\}$, respectively. The inequalities $-6\leq \ell_{1}-2\ell_{2} < 3$ and $-4\leq \ell_{3} - \ell_{2} <2 $ restrict the possible triplets $\ell=(\ell_{1}, \ell_{2}, \ell_{3})$ to $(-12,-6,-6)$, $(-13,-5,-7)$, $(-14,-4,-8)$, $(0,0,0)$, $(-1,1,-1)$, $(-2,2,-2)$, $(12,6,6)$, $(11,7,5)$ and $(10,8,4)$. However, $\mathcal{M}^{-}_{a,\emptyset}=\{(-14,-4,-8),\, (0,0,0),\, (-1,1,-1)\}$, since these are the only triplets satisfying $0\leq -4\ell_{1}+17\ell_{2}-3\ell_{3}<36$. 
Denote 
$N_{a}(\ell) := (N_{a}(\ell,n_{1}), N_{a}(\ell,n_{2}), N_{a}(\ell,n_{3})) = 
( \frac{8\ell_{1}}{9} - \frac{\ell_{2}}{9}+ \lfloor \frac{-\ell_{1}}{2} \rfloor + \lfloor \frac{1-\ell_{1}}{3} \rfloor,\ 
-\frac{\ell_{1}}{9}+\frac{29\ell_{2}}{36}- \frac{\ell_{3}}{12} +\lfloor \frac{-\ell_{2}}{3} \rfloor,\ 
 -\frac{\ell_{2}}{12}+\frac{11\ell_{3}}{12} + \lfloor \frac{-\ell_{3}}{2} \rfloor + \lfloor \frac{1 - \ell_{3}}{3} \rfloor   
)$. Then we have $N_{a}(-14,-4,-8)=(0,0,0)$, $N_{a}(0,0,0)=(0,0,0)$ and $N_{a}(-1,1,-1)=(-1,0,-1)$, hence we can easily see the graded pieces 
\begin{align*}
\mathrm{gr}_{0}\mathcal{M}_{0,n_2}^{-} =\mathcal{M}^{-}_{a,\emptyset}, \, \, \ \
\mathrm{gr}_{0}\mathcal{M}_{0,\{n_1,n_2\}}^{-}=\mathrm{gr}_{0}\mathcal{M}_{0,\{n_2,n_3\}}^{-}=\mathrm{gr}_{0}\mathcal{M}_{0,\{n_1,n_2,n_3\}}^{-}=\{(-1,1,-1)\}.
\end{align*}
This gives the rational form of the topological Poincar\'e series associated with $h$
\begin{align*}
Z_{h}(\mathbf{t}_{\mathcal{N}})=\frac{\mathbf{t}^{(4,2,2)}+\mathbf{t}^{(17,7,9)}+\mathbf{t}^{(18,6,10)}}{(1-\mathbf{t}^{(21,6,6)})(1-\mathbf{t}^{(6,3,9)})}-\frac{\mathbf{t}^{(17,7,9)}}{1-\mathbf{t}^{(6,3,9)}}-
\frac{\mathbf{t}^{(17,7,9)}}{1-\mathbf{t}^{(21,6,6)}}+\mathbf{t}^{(17,7,9)}. 
\end{align*}
By Section \ref{ss:poldata} the polynomial datas appearing above are 
$Pol_{((17,7,9),\{n_3\})}(\mathbf{t}_{\mathcal{N}})=-\mathbf{t}^{(11,4,0)}-\mathbf{t}^{(5,1,-9)}$,  
$Pol_{((17,7,9),\{n_1\})}(\mathbf{t}_{\mathcal{N}})=-\mathbf{t}^{(-4,1,3)}$, 
$Pol_{((17,7,9),\emptyset)}(\mathbf{t}_{\mathcal{N}})=\mathbf{t}^{(17,7,9)}$.
Hence, Corollary \ref{cor:pol} and \ref{cor:swnorm} give that 
\begin{align*}
P_{h}(\mathbf{t}_{\mathcal{N}})=\mathbf{t}^{(17,7,9)}+\mathbf{t}^{(11,4,0)}+\mathbf{t}^{(5,1,-9)}+\mathbf{t}^{(-4,1,3)} \ \ \, \mbox{and} \ \ \,  \mathfrak{sw}^{norm}_{h}(M)=4.
\end{align*}

\section{Examples and analogies}\labelpar{s:exanal}

\subsection{Semigroup of plane curve singularities}\labelpar{sec:SpcI}

\subsubsection{} 
Let $K\subset S^3$ be an algebraic knot, ie. the link of an irreducible plane curve singularity defined by a germ of function $g:(\mathbb{C}^2,0)\longrightarrow(\mathbb{C},0)$. Then $K$ is an iterated torus knot and any of the well known invariants -- semigroup, linking pairs, Newton pairs, Alexander polynomial, embedded resolution graph, or equivalently, plumbing graph of $K$ -- characterizes completely the isotopy type of $K\subset S^3$. For more details see \cite{BKcurves} and \cite{EN} as general references for the theory.

The set of intersection multiplicities of $g$ with all possible analytic germs is a semigroup and it will be denoted by $\mathcal{M}_g$. Although the definition is analytic, one can describe $\mathcal{M}_g$ combinatorially by giving its Hilbert basis in terms of the linking pairs $(p_i,a_i)_{i=1}^r$: 
\begin{equation}\label{Hilbsem}
\mathrm{Hilb}(\mathcal{M}_g):=\{p_1 p_2\dots p_r, \ a_i p_{i+1}\dots p_r, \ a_r     \ |\  i=1,\dots,r-1\}. 
\end{equation} 
We consider the minimal embedded resolution graph of the singularity, or equivalently, the minimal negative definite plumbing graph $\Gamma_g$ of the knot $K\subset S^3$. $\Gamma_g$ is a tree with an extra arrow representing the knot, and it has the following shape
\begin{center}
\begin{picture}(400,80)(-20,0)
\put(50,60){\circle*{3}} \put(100,60){\circle*{3}}
\put(150,60){\circle*{3}} \put(250,60){\circle*{3}}
\put(300,60){\circle*{3}} \put(100,20){\circle*{3}}
\put(150,20){\circle*{3}} \put(250,20){\circle*{3}}
\put(300,20){\circle*{3}} \put(350,60){\makebox(0,0){$K$}}
\dashline{3}(50,60)(175,60) \dashline{3}(225,60)(300,60)
\put(100,20){\dashbox{3}(0,40){}}
\put(150,20){\dashbox{3}(0,40){}}
\put(250,20){\dashbox{3}(0,40){}}
\put(300,20){\dashbox{3}(0,40){}}
\put(200,60){\makebox(0,0){$\cdots$}}
\put(300,60){\vector(1,0){30}} 
\put(300,70){\makebox(0,0){$v_r$}}
\put(100,70){\makebox(0,0){$v_1$}}
\put(150,70){\makebox(0,0){$v_2$}}
\put(250,70){\makebox(0,0){$v_{r-1}$}}
%\put(288,50){\makebox(0,0){$(m_f)$}}
\put(310,50){\makebox(0,0){$-1$}}
%\put(345,60){\makebox(0,0){$(1)$}}
\put(10,40){\makebox(0,0){$\Gamma_g:$}}
\end{picture}
\end{center}
where the dash-lines represent the chains and legs. 
We regard the unique $(-1)$-vertex $v_r$ as a node of the graph $\Gamma_g$ and run the construction from Section \ref{s:monoid}. The result is an affine monoid of rank $r$ denoted by   
$\mathcal{M}_{\Gamma_g}$, which can be described with the linking, or, equivalently, the Newton pairs using eg. \cite{EN}. 

For simplicity, we describe $\mathcal{M}_{\Gamma_g}$ in the case when there is only one linking pair $(p,a)$. One can regard $\Gamma_g$ as a star-shaped graph with 
Seifert invariants $(p,\omega_p)$, $(a,\omega_a)$, and $(1,0)$ is attached with the arrow-leg representing the knot. Here, $\omega_p$ and $\omega_a$ are uniquely determined by the Diophantine equation $pa-\omega_p a -\omega_a p=1$. 
Thus, by Section \ref{s:monoid} we get  
$$\mathcal{M}_{\Gamma_g}=\Big\{\ell \in \mathbb{Z}_{\geq 0} \ | \ \ell - \Big\lceil\frac{\omega_p\ell}{p}\Big\rceil - \Big\lceil\frac{\omega_a\ell}{a}\Big\rceil\geq 0 \Big\}. $$ 
It can be checked that $\mathcal{M}_{\Gamma_g}$ is a numerical semigroup minimally generated by $p$ and $a$. Hence, it equals with the semigroup $\mathcal{M}_g$ by (\ref{Hilbsem}).

In the next section we will show how to recover $\mathcal{M}_g$ from the affine monoid $\mathcal{M}_{\Gamma_g}$ in the general case, when $r>1$. 

\subsubsection{\bf Semigroup, Poincar\'e series and analogy}\labelpar{sec:SpcII}

We denote the set of vertices of $\Gamma_g$ by $\mathcal{V}_g$ and the set of nodes by $\mathcal{N}_g:=\{v_1,\dots,v_r\}$. Following Section \ref{ss:TPs}, one can define associated with $\Gamma_g$ the reduced zeta function 
$f_{\Gamma_g}(\mathbf{t}_{\mathcal{N}_g})=\prod_{v\in \mathcal{V}_g}(1-\mathbf{t}_{\mathcal{N}_g}^{E_v^*})^{\delta_v-2}$. 
Notice that the dual elements $E_v^*$ are associated with the graph we get from $\Gamma_g$ by deleting the arrow (a plumbing graph of $S^3$). However, the vertex $v_{r}$ has $\delta_{v_r}=3$ which takes into account the information on the knot $K\subset S^3$ as well.  
Similar calculation as in Section \ref{ss:eqdec} shows that 
the reduced Poincar\'e series equals
$$
Z_{\Gamma_g}(\mathbf{t}_{\mathcal{N}_g})=\sum_{\ell \in \mathrm{gr}_{0}\mathcal{M}_{\Gamma_g}}\mathbf{t}^{\ell},
$$
 where the filtration on $\mathcal{M}_{\Gamma_g}$ is associated with $\mathcal{N}_g=\{v_2,\dots,v_{r-1}\}$.
Moreover, this induces the following correspondence between the semigroup $\mathcal{M}_g$ and the affine monoid $\mathcal{M}_{\Gamma_g}$. 

\begin{thm}
The projection $\pi_{v_r}(\mathrm{gr}_{0}\mathcal{M}_{\Gamma_g})\subset \mathbb{Z}_{\geq 0}$ is a numerical semigroup equal with $\mathcal{M}_g$. 
\end{thm}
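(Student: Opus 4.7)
The plan is to prove the set-theoretic equality $\pi_{v_r}(\mathrm{gr}_{0}\mathcal{M}_{\Gamma_g}) = \mathcal{M}_g$ in two steps: first showing $\pi_{v_r}$ is injective on $\mathrm{gr}_0\mathcal{M}_{\Gamma_g}$, then identifying the image with $\mathcal{M}_g$ through the known coincidence between the topological Poincar\'e series and the semigroup Poincar\'e series of $g$.

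First I would unpack what it means for $\ell = \sum_i \ell_{v_i} E_{v_i}$ to belong to $\mathrm{gr}_0\mathcal{M}_{\Gamma_g}$, using the path structure of $\mathcal{N}_g = \{v_1,\ldots,v_r\}$, for which $\widehat{\mathcal{N}}_g = \{v_2,\ldots,v_{r-1}\}$. Such an $\ell$ is characterized by the equalities $N_0(\ell,v_i)=0$ for $2\le i \le r-1$, the inequalities $N_0(\ell,v_1),\,N_0(\ell,v_r)\ge 0$, and the chain congruences $\ell_{v_{i-1}} + \omega_{v_i,v_{i-1}}\ell_{v_i} \equiv 0 \pmod{\alpha_{v_{i-1},v_i}}$ defining $\mathbb{Z}^{\mathcal{N}_g}(0)$. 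To establish the injectivity of $\pi_{v_r}$, I would solve this system recursively from right to left: the coefficient $-1/\alpha_{v_{r-1},v_r}$ of $\ell_{v_{r-1}}$ in $N_0(\ell,v_{r-1})=0$ is nonzero, so combined with the congruence at $(v_{r-1},v_r)$ it determines $\ell_{v_{r-1}}\in\mathbb{Z}$ uniquely from $\ell_{v_r}$; inductively, $N_0(\ell,v_i)=0$ and the congruence at $(v_{i-1},v_i)$ determine $\ell_{v_{i-1}}$ from $\ell_{v_i}$ and $\ell_{v_{i+1}}$.

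For the image identification, I would combine the identity $Z_{\Gamma_g}(\mathbf{t}_{\mathcal{N}_g}) = \sum_{\ell\in\mathrm{gr}_0\mathcal{M}_{\Gamma_g}} \mathbf{t}^\ell$ recalled in the excerpt with the Campillo--Delgado--Gusein-Zade theorem (\cite{CDGPs}), which for an irreducible plane curve singularity gives
$$Z_{\Gamma_g}(\mathbf{t}_{\mathcal{N}_g})\Big|_{t_{v_i}=1,\ i<r} = \sum_{m\in\mathcal{M}_g} t_{v_r}^{m}.$$
The injectivity from the previous step reduces the left-hand side to $\sum_{m \in \pi_{v_r}(\mathrm{gr}_0\mathcal{M}_{\Gamma_g})} t_{v_r}^m$ with all coefficients equal to one, so comparing supports yields $\pi_{v_r}(\mathrm{gr}_0\mathcal{M}_{\Gamma_g}) = \mathcal{M}_g$; the numerical semigroup property is inherited from $\mathcal{M}_g$.

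The main obstacle lies in the recursion: the quasilinear functions $N_0(\ell,v_i)$ contain floor contributions $\lfloor -\omega_{u_i}\ell_{v_i}/\alpha_{u_i} \rfloor$ from each leg end $u_i \in \mathcal{E}_{v_i}$, and one must verify carefully that the chain congruences are precisely the integrality conditions guaranteeing that the floor-adjusted linear equations admit a unique integer solution at each stage. A cleaner alternative, avoiding \cite{CDGPs} altogether, would be to realize each generator in the Hilbert basis (\ref{Hilbsem}) as the $v_r$-coordinate of $\pi_{\mathcal{N}_g}(E^{*}_u)$ for an appropriate end $u \in \mathcal{E}$ via the explicit formulas of Lemma \ref{lem:localeq}, and then match generating sets directly with those of $\mathcal{M}_g$.
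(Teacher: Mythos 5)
Your injectivity step has a genuine gap. The set $\mathrm{gr}_{0}\mathcal{M}_{\Gamma_g}$ is cut out by only $r-2$ equalities $N_0(\ell,v_i)=0$, $2\le i\le r-1$ (the end-nodes $v_1,v_r$ have $\delta_{v,\mathcal{N}}=1$ and contribute only inequalities), in the $r$ unknowns $\ell_{v_1},\dots,\ell_{v_r}$. Each equation $N_0(\ell,v_i)=0$ involves the three consecutive coordinates $\ell_{v_{i-1}},\ell_{v_i},\ell_{v_{i+1}}$, so your recursion can only propagate once the \emph{pair} $(\ell_{v_{r-1}},\ell_{v_r})$ is known; fixing $\ell_{v_r}$ alone leaves a one-parameter family of real solutions when $r\ge 3$ (and for $r=2$ there are no equalities at all, since $\widehat{\mathcal{N}}=\emptyset$). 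Your opening claim also misreads the coefficients: in the linearization $\overline{N}_0(\ell,v_{r-1})$ the coefficient of $\ell_{v_{r-1}}$ is the diagonal term $-e_{v_{r-1}}$, while $-1/\alpha_{v_{r-1},v_r}$ is the coefficient of the \emph{neighboring} variable $\ell_{v_r}$. Injectivity of $\pi_{v_r}$ therefore cannot come from the equalities and congruences alone; it must use the two boundary inequalities $N_0(\ell,v_1)\ge 0$ and $N_0(\ell,v_r)\ge 0$ to pin down the remaining parameter, and you give no argument for why exactly one lattice point on each residual line survives both.

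The irony is that this step is unnecessary: injectivity falls out of the series identity you invoke in your second paragraph, and this is exactly how the paper argues. Since $\Gamma_g$ is a path of nodes, no node has $\delta_{n,\mathcal{N}}>2$, so every $\ell\in\mathrm{gr}_0\mathcal{M}_{\Gamma_g}$ contributes $+1$ to $Z_{\Gamma_g}(\mathbf{t}_{\mathcal{N}_g})$, and the coefficient of $t^m$ in $Z_g(t)=Z_{\Gamma_g}(\mathbf{t}_{\mathcal{N}_g})\mid_{t_{v}=1,\,v\neq v_r}$ is precisely the cardinality of the fiber $\pi_{v_r}^{-1}(m)\cap\mathrm{gr}_0\mathcal{M}_{\Gamma_g}$. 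Comparing with $\mathcal{H}_{\mathcal{M}_g}(t)$, whose coefficients are $0$ or $1$ (the paper reaches this identity via A'Campo's formula and \cite{CDG99} rather than \cite{CDGPs}, but the content is the same), forces every fiber to be empty or a singleton and identifies the nonempty fibers with $\mathcal{M}_g$ simultaneously. So the correct order of logic is the reverse of yours: the $0/1$ coefficients give injectivity, not the other way around. Your proposed alternative via the Hilbert basis (\ref{Hilbsem}) would face the same difficulty, since matching generators of the image does not by itself show that the fibers are singletons.
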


\begin{proof}
The definition above and the A'Campo formula \cite{Acampo} deduce that the one-variable series $Z_{g}(t):=Z_{\Gamma_g}(\mathbf{t}_{\mathcal{N}_g})\mid_{t_v=1,v\neq v_r}=\sum_{\ell \in \mathrm{gr}_{0}\mathcal{M}_{\Gamma_g}}t^{\pi_{v_r}(\ell)}$ is the expansion of the monodromy zeta function of $g$ (see also \cite{EN}). Moreover, \cite{CDG99} implies the following identities
\begin{equation}\label{idMonZeta}
Z_{g}(t)=\mathcal{H}_{\mathcal{M}_g}(t)=\frac{\Delta(t)}{1-t},
\end{equation}
where $\mathcal{H}_{\mathcal{M}_g}(t)$ is the (fine) Hilbert series of the semigroup $\mathcal{M}_g$ and $\Delta(t)$ is the Alexander polynomial of the knot $K\subset S^3$. Thus, the projection $\pi_{v_r}:\mathrm{gr}_{0}\mathcal{M}_{\Gamma_g}\longrightarrow \pi_{v_r}(\mathrm{gr}_{0}\mathcal{M}_{\Gamma_g})$ is a bijection and $\pi_{v_r}(\mathrm{gr}_{0}\mathcal{M}_{\Gamma_g})$ agrees with $\mathcal{M}_g$.
\end{proof}

Let $P_{g}(t)$ be the polynomial part of $Z_{g}(t)$. One can compare with the Alexander polynomial via the following explicit expressions coming from (\ref{idMonZeta}):
$$P_g(t)=-\sum_{s\notin \mathcal{M}_g}t^s \ \ \  \ \mbox{and} \ \ \  \ \Delta(t)=1-\sum_{\substack{s\notin \mathcal{M}_g\\s-1\in \mathcal{M}_g}}t^s+\sum_{\substack{s\in \mathcal{M}_g\\s-1\notin \mathcal{M}_g}}t^s.$$ 
Therefore, $-P_{g}(1)$ equals the genus of the semigroup $\mathcal{M}_{g}$, which is the \emph{delta-invariant} $\delta$  of the plane curve singularity. 
Notice also that the degree of $P_g(t)$ is one less than of $\Delta(t)$  and equals $2\delta-1$. Nevertheless, two polynomials are `equivalent' in the sense that they determine the semigroup $\mathcal{M}_g$.

In summary, similarly to the case of plane curve singularities where we can associate with the semigroup the triple of invariants $(Z_g(t),P_g(t),\delta)$, this article is focused to the analogous picture for the link of normal surface singularities by associating with the modules $\mathcal{M}_{a}$ the triples of invariants $(Z_h(\mathbf{t}), P_h(\mathbf{t}), \mathfrak{sw}^{norm}_h)_{h\in H}$.

\subsection{\bf Semigroup of Seifert homology spheres}\labelpar{sec:Sf3}

Let $M$ be the Seifert homology sphere $\Sigma(\alpha_1,\dots,\alpha_d)$. Notice that $1<\alpha_1<\dots<\alpha_d$ are pairwise relatively prime integers and the triviality of $H$ implies that the Seifert  invariants are uniquely determined by the Diophantine equation $b_0\prod_{i=1}^d\alpha_i+\sum_{i=1}^d\omega_i\prod_{j=1\atop j\neq i}^d\alpha_j=-1$.

We consider the associated semigroup 
$$\mathcal{M}_{0}=\Big\{\ell \in \mathbb{Z}_{\geq 0} \ | \ -b_0\ell - \sum_{i=1}^d\Big\lceil\frac{\omega_i\ell}{\alpha_i}\Big\rceil\geq 0 \Big\},$$
given by Section \ref{s:monoid}. Can and Karakurt \cite{CK} proved that 
$$\mathcal{M}_{0}\cap [0,\alpha_1 \dots \alpha_d(d-2-\sum_{i=1}^d1/\alpha_i)]\subset \mathbb{Z}_{\geq 0}\langle(\alpha_1\dots\alpha_d/\alpha_i)_{i=1}^d\rangle$$
where the latter is the numerical semigroup minimally generated by $\alpha_1\dots\alpha_d/\alpha_i$ for $i=1,2,\dots,d$. The importance of this inclusion is the fact that the above piece of $\mathcal{M}_0$ already determines the Heegaard--Floer homology type of the manifold $\Sigma(\alpha_1,\dots,\alpha_d)$. However, the structure of $\mathcal{M}_{0}$ (as a semigroup) is more involved. These observations motivated the forthcoming article \cite{Semigp} which proves that, in fact, $\mathcal{M}_{0}=\mathbb{Z}_{\geq 0}\langle(\alpha_1\dots\alpha_d/\alpha_i)_{i=1}^d\rangle$.


\begin{thebibliography}{30}

\bibitem[AC75]{Acampo} A'Campo, N.:
La fonction zeta d'une monodromy,
{ \em Com. Math. Helvetici} {\bf 50} (1975), 233--248.

\bibitem[BN10]{BN} Braun, G. and N\'emethi, A.:
Surgery formula for Seiberg--Witten invariants of negative definite plumbed 3--manifolds,
{ \em J. f\"ur die reine und ang. Math.} {\bf 638} (2010), 189--208.

\bibitem[BN07]{BNnewt} Braun, G. and N\'emethi, A.:
Invariants of Newton non-degenerate surface singularities, {\em Compositio Math.} {\bf 143} (2007), 1003--1036.

\bibitem[BK86]{BKcurves} Brieskorn, E. and Kn\"orrer, H.: Plane Algebraic Curves, Birkh\"auser, Boston, 1986.

\bibitem[BG09]{BG} Bruns, W. and Gubeladze, J.: Polytopes, rings and K-theory, {\em Springer Monographs in Math.}, Springer, Dordrecht, 2009.

\bibitem[CK14]{CK} Can, M.B. and Karakurt, \c{C}.: Calculating Heegaard--Floer homology by counting lattice points in tetrahedra, {\em Acta Math. Hungar.} {\bf 144 (1)} (2014), 43--75.

\bibitem[CDGZ99]{CDG99}  Campillo, A.,  Delgado, F. and Gusein-Zade, S. M.:
On the monodromy of a plane curve singularity and Poincar\'e series of the ring of functions on the curve,
{\em Func. Anal. and its Appl.} {\bf 33} (1999), no. 1, 56--57.

\bibitem[CDGZ04]{CDGPs} Campillo, A.,  Delgado, F. and Gusein-Zade, S. M.:
Poincar\'e series of a rational surface singularity, {\em Invent. Math.} {\bf 155} (2004),
no. 1, 41--53.

\bibitem[CDGZ08]{CDGEq}  Campillo, A.,  Delgado, F. and Gusein-Zade, S. M.:
Universal abelian covers of rational
surface singularities and multi-index filtrations,
{\em Funk. Anal. i Prilozhen.} {\bf 42} (2008), no. 2, 3--10.

\bibitem[EN85]{EN} Eisenbud, D. and Neumann, W.: Three--dimensional link theory and invariants of plane curve singularities, 
{ Princeton Univ. Press}, 1985.

\bibitem[GS99]{GS} Gompf, R.E. and Stipsicz, A.:  An introduction to
$4$--manifolds and Kirby calculus, {\em Graduate Studies in
Mathematics} {\bf 20} , Amer. Math. Soc., 1999.

\bibitem[JN83]{JN83} Jankins, M. and Neumann, W.D.: Lectures on Seifert manifolds, {\em Brandeis lecture notes} no. {\bf 2}, Brandeis University, 1983.

\bibitem[K15]{K} Katth\"an, L.: Non-normal affine monoid algebras, {\em Manuscripta Math.} {\bf 146} (2015), no. 1-2, 223--233.

\bibitem[L13]{LPhd} L\'aszl\'o, T.: Lattice cohomology and Seiberg--Witten invariants of normal surface singularities, PhD. thesis, 
Central European University, Budapest, 2013.

\bibitem[LN14]{LN} L\'aszl\'o, T. and N\'emethi, A.: Ehrhart theory of polytopes and Seiberg-Witten invariants of plumbed 3--manifolds,
{\em Geometry and Topology} {\bf 18} (2014), no. 2, 717--778. 

\bibitem[LN1x]{Semigp} L\'aszl\'o, T. and N\'emethi, A.: A note on semigroups of Seifert homology spheres,
{\em manuscript in preparation}.

\bibitem[LSz15]{LSz} L\'aszl\'o, T. and Szil\'agyi, Zs.: On Poincaré series associated with links of normal surface singularities, 
{\em arXiv:1503.09012v2 [math.GT]} (2015).

\bibitem[N05]{NOSZ} N\'emethi, A.: On the Ozsv\'ath--Szab\'o invariant of negative
definite plumbed 3--manifolds,
{\em Geometry and Topology} {\bf 9} (2005), 991--1042.

\bibitem[N08]{NPS} N\'emethi, A.: Poincar\'e series associated with surface singularities, in Singularities I, 271--297,
{\em Contemp. Math.} {\bf 474}, Amer. Math. Soc., Providence RI, 2008.

\bibitem[N12]{NCL} N\'emethi, A.: The cohomology of line bundles of splice--quotient singularities,
{\em Advances in Math.} {\bf 229} 4 (2012), 2503--2524.

\bibitem[N11]{NJEMS} N\'emethi, A.: The Seiberg--Witten invariants of negative definite plumbed 3--manifolds,
{\em J. Eur. Math. Soc.} {\bf 13} (2011), 959--974.

\bibitem[NN04]{NN2} N\'emethi, A. and Nicolaescu, L.I.:
Seiberg--Witten invariants and surface singularities II
(singularities with good ${\bf C}^*$-action),
{\em Journal of London Math. Soc.} {\bf 69} 2 (2004), 593--607.

\bibitem[Neu83]{neumannA}
Neumann, W.D.: Abelian covers of quasihomogeneous surface singularities,
Singularities, Part 2 (Arcata, Calif., 1981), {\em Proc. Sympos. Pure Math.}, vol. 40,
Amer. Math. Soc., Providence, RI (1983), 233--243.

\bibitem[S15]{Baldur} Sigur\dh sson, B: The geometric genus and Seiberg--Witten invariant of Newton nondegenerate surface singularities, PhD. thesis, 
Central European University, Budapest, 2015.

\bibitem[OSz03]{Osz} Ozsv\'ath, P.S. and Szab\'o, Z.: On the Floer
homology of plumbed three--manifolds, {\em Geom. Topol.} {\bf 7} (2003),
185--224.

\end{thebibliography}
\end{document}